\title{Some Results on Local Distance Antimagic Chromatic Number of Graphs}
\author{Maurice Genevieva Almeida$^1$, Tarkeshwar Singh$^2$ \\
	$^1$p20230078@goa.bits-pilani.ac.in, $^2$tksingh@goa.bits-pilani.ac.in\\
	$^{1,}$ $^2$Birla Institute of Technology and Science Pilani,\\ K K Birla Goa Campus, Goa, India.}
\newtheorem{theorem}{Theorem}[section]
\newtheorem{lemma}[theorem]{Lemma}
\newtheorem{proposition}[theorem]{Proposition}
\newtheorem{cor}[theorem]{Corollary}
\newtheorem{problem}[theorem]{Problem}
\newcommand{\ld}{\chi_{ld}}
\begin{document}
\date{}
\maketitle
\begin{abstract}
	Let $G=(V,E)$ be a graph of order $n$ without isolated vertices. A bijection $f\colon V\rightarrow \{1,2,\dots,n\}$ is called a local distance antimagic labeling, if $w(u)\not=w(v)$ for every edge $uv$ of $G$, where $w(u)=\sum_{x\in N(u)}f(x)$. The local distance antimagic chromatic number $\chi_{ld}(G)$ is defined to be the minimum number of colors taken over all colorings of $G$ induced by local distance antimagic labelings of $G$. In this paper, we study the local distance antimagic chromatic number for the join of graphs and the lexicographic product of graphs with the complement of the complete graph.
\end{abstract}
\textbf{2020 Mathematics Subject Classification:} 05C 78 \\\\
\textbf{Keywords:} Local distance antimagic labeling, local distance antimagic chromatic number, lexicographic product.
\section{Introduction}
 By a graph $G=(V, E)$, we mean a finite, simple, undirected graph having neither multiple edges nor loops. For graph theoretic notations, we refer to Chartrand and Lesniak \cite{chart}.\\ 
 
 The notion of antimagic labeling was introduced by Hartsfield and Ringel \cite{hartsfield} in 1990. A graph $G$ is antimagic if the edges of $G$ can be labeled by the numbers $\{1,2,\dots,|E|\}$ such that the sums of the labels of the edges incident to each vertex (called the weight of a vertex) are all distinct. They conjectured that {\it every connected graph with at least three vertices admits an antimagic labeling}. They also made a weaker conjecture that {\it every tree with at least three vertices admits an antimagic labeling}. These two conjectures were partly shown to be correct by several authors, but they are still unsolved.\\
 
Arumugam and Kamatchi \cite{Kamatchi} introduced a vertex version of antimagic labeling of a graph as follows:  a bijection $f: V \rightarrow \{1,2,\dots,n\}$ is said to be distance antimagic labeling of $G$ if all the vertices have distinct vertex weights, where the weight of a vertex is defined as $w(v)=\sum_{x\in N(v)}f(x)$, where $N(v)$ is the open neighborhood of the vertex $v$, which is defined as the set of vertices of the graph $G$ which are adjacent to $v$.  A graph $G$  is called a distance antimagic graph if it admits a distance antimagic labeling $f$.  For details (see \cite{Handathesis}, \cite{Handa}, \cite{Kamatchi}). \\


 Arumugam et al.\cite{premalatha} introduced a local version of antimagic labeling: let $G=(V,E)$ be a graph. A bijection $f\colon E\rightarrow \{1,2,\dots, |E|\}$ is called local antimagic labeling if for any two adjacent vertices $u$ and $v$, $w(u)\not=w(v)$, where $w(u)=\sum_{e\in E(u)}f(e)$ and $E(u)$ is the set of edges incident to $u$. Thus any local antimagic labeling induces a proper vertex coloring of $G$ where the vertex $v$ is assigned the color $w(v)$. The local antimagic chromatic number $\chi_{la}(G)$ is the minimum number of colors taken over all colorings induced by local antimagic labeling of $G$.\\

Arumugam et al.\cite{premalatha} conjectured that {\it a connected graph with at least three vertices admit a local antimagic labeling}. Bensmail et al. \cite{premalatha} solved this conjecture partially. Finally, Haslegrave proved this conjecture using probabilistic tools \cite{haslegrave}. Recently, several authors investigated the local antimagic chromatic number for several families of graphs. For further study, ( see \cite{premalatha}, \cite{lauLAG2}, \cite{raviLAG}, \cite{lauLAG1}).\\

Motivated by local antimagic labeling, Divya et al.\cite{yamini} and Handa et al.\cite{Handa} independently introduced the notion of local distance antimagic labeling as follows: let $G=(V, E)$ be a graph of order $n$ and let $f\colon V\rightarrow\{1,2,\dots,n\}$ be a bijection. For every vertex $v\in V$, define the weight of $v$ as $w(v)=\sum_{x\in N(u)}f(x)$. The labeling $f$ is said to be local distance antimagic labeling of $G$ if $w(u)\not=w(v)$ for every pair of adjacent vertices $u,v\in V$. A graph that admits such a labeling is called a local distance antimagic graph. A local distance antimagic labeling induces a proper vertex coloring of the graph, with the vertex $v$ assigned the color $w(v)$. The local distance antimagic chromatic number $\chi_{ld}(G)$ is the minimum number of colors taken over all colorings induced by local distance antimagic labelings of $G$. Clearly $\chi_{ld}(G) \geq \chi(G)$.\\ 
\section{Known Results}
Several authors have studied and found local distance antimagic chromatic numbers for different classes of graphs. For further study, (see  \cite{yamini}, \cite{Handa}, \cite{Nalliah1}, \cite{Nalliah2}, \cite{Nalliah3}, \cite{Nalliah4}). Handa et al.\cite{Handa} proved the following result, which is useful to get a lower bound for the local distance antimagic chromatic number of a graph.	
	\begin{proposition}\cite{Handa}\label{handaprop}
		Let $G$ be a local distance antimagic graph of order $n$. If $u$ and $v$ are vertices such that $|N(u)\triangle N(v)|=1\ or\ 2$, then $w(u)\not=w(v)$.
	\end{proposition}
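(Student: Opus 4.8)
The plan is to compute the difference $w(u)-w(v)$ directly and show it cannot vanish, using only that the labels $f(x)$ are distinct positive integers. First I would note that the neighbours shared by $u$ and $v$ contribute the same amount to $w(u)$ and to $w(v)$, so they cancel and
\[
w(u)-w(v)=\sum_{x\in N(u)}f(x)-\sum_{x\in N(v)}f(x)=\sum_{x\in A}f(x)-\sum_{x\in B}f(x),
\]
where $A=N(u)\setminus N(v)$ and $B=N(v)\setminus N(u)$. These two sets are disjoint with $A\cup B=N(u)\triangle N(v)$, so the hypothesis is exactly $|A|+|B|\in\{1,2\}$.

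Next I would carry out the case analysis on the pair $(|A|,|B|)$. If $|A|+|B|=1$, one of the two sets is empty and the other is a single vertex $a$, whence $w(u)-w(v)=\pm f(a)\neq 0$ since labels are positive. If $|A|+|B|=2$, the subcases $(|A|,|B|)\in\{(2,0),(0,2)\}$ make $w(u)-w(v)$ equal, up to sign, to a sum of two distinct positive labels, hence nonzero; and the subcase $(|A|,|B|)=(1,1)$, say $A=\{a\}$ and $B=\{b\}$ with $a\neq b$, gives $w(u)-w(v)=f(a)-f(b)\neq 0$ by the injectivity of $f$. In every case $w(u)\neq w(v)$, which is the assertion.

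There is no real obstacle here; the only point that needs a little care is the subcase $|A|=|B|=1$, where one must invoke the injectivity of $f$ (so that $f(a)\neq f(b)$) rather than mere positivity of the labels, together with the bookkeeping that the symmetric difference is precisely the disjoint union of the two ``private'' neighbourhoods. I would also remark that the argument uses nothing about $f$ beyond it being a bijection onto $\{1,\dots,n\}$; the assumption that $G$ is a local distance antimagic graph is present only to guarantee that such a labeling $f$ exists and to make the statement useful as a lower bound for $\chi_{ld}(G)$.
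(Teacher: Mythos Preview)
Your argument is correct and is exactly the standard proof of this fact. Note, however, that in this paper the proposition appears in the ``Known Results'' section and is merely cited from \cite{Handa} without proof, so there is no proof in the paper to compare against; your write-up is the natural one and would serve perfectly well as the missing justification.
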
 
The following result by Priyadharshini et al.\cite{Nalliah1}, gives a lower bound for the local distance antimagic chromatic number of trees based on the number of support vertices it has.	
	\begin{theorem}\cite{Nalliah1}\label{nalliahleaf}
		Let $T$ be a tree on $n\geq 3 $ vertices with $k$ leaves and let $L=\{N(l), \text{where $l$ is a leaf}\}$. Also let $|L|=t$. Then $\chi_{ld}(T)\geq t+1$.
	\end{theorem}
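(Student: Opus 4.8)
The plan is to prove the lower bound by showing that \emph{every} colouring of $T$ induced by a local distance antimagic labeling already uses at least $t+1$ colours. So I would fix an arbitrary local distance antimagic labeling $f\colon V(T)\to\{1,2,\dots,n\}$ and let $c$ be the induced colouring, $c(v)=w(v)=\sum_{x\in N(v)}f(x)$. Since a leaf $l$ has exactly one neighbour, namely its support vertex $s_l$, the family $L$ in the statement is precisely $\{\{s\}:\ s\ \text{is a support vertex of }T\}$; thus $T$ has exactly $t$ support vertices, and I write $S$ for this set.

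First I would pin down the colours of the leaves. For a leaf $l$ we have $c(l)=w(l)=f(s_l)$, so two leaves get the same colour exactly when they share a support vertex, while leaves with distinct support vertices get distinct colours since $f$ is injective (this is also the content of Proposition~\ref{handaprop} applied to two leaves). Hence the leaves of $T$ realise exactly the $t$ distinct colours of $C_L:=\{f(s):s\in S\}$, and $\max C_L=f(s^*)$ for some $s^*\in S$.

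Next I would produce one further colour, splitting into two cases. If $t=1$ this is immediate: $T$ is connected, bipartite and has at least three vertices, so $\chi(T)=2$, and therefore $\chi_{ld}(T)\ge\chi(T)=2=t+1$. If $t\ge 2$, then $T$ has at least two support vertices, so $T$ is not a star. Since $n\ge 3$ and $T$ is connected, $\deg(s^*)\ge 2$; and if every neighbour of $s^*$ were a leaf, then connectedness would force $T=K_{1,\deg(s^*)}$, a star, which is impossible. Hence $s^*$ has a neighbour $v$ with $\deg(v)\ge 2$, and then
\[ w(v)=f(s^*)+\sum_{x\in N(v)\setminus\{s^*\}}f(x)\ge f(s^*)+1>f(s^*)=\max C_L, \]
so $c(v)=w(v)\notin C_L$. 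Therefore $c$ uses the $t$ colours of $C_L$ together with the colour $c(v)$, i.e.\ at least $t+1$ colours in all. Since $f$ was an arbitrary local distance antimagic labeling, $\chi_{ld}(T)\ge t+1$.

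The leaf-colour count and the degree bookkeeping are routine; the step I expect to be the main obstacle is the production of the extra colour, that is, ruling out that every vertex weight already lies in $C_L$. The clean way around it is the dichotomy above: when $t=1$ the trivial bound $\chi_{ld}\ge\chi$ suffices, and when $t\ge 2$ one exploits that $t\ge 2$ forces $T$ to be a non-star, so that the largest-labelled support vertex has a neighbour of degree at least $2$ whose weight strictly exceeds $\max C_L$.
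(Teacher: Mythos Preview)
The paper does not give its own proof of this statement: Theorem~\ref{nalliahleaf} is quoted as a known result from \cite{Nalliah1} in the ``Known Results'' section, with no argument supplied. So there is no in-paper proof to compare against.

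That said, your argument is correct and self-contained. The identification of $L$ with the set of (singleton) support-vertex neighbourhoods, and hence of $t$ with the number of support vertices, is right. Injectivity of $f$ gives exactly $t$ distinct leaf colours $C_L=\{f(s):s\in S\}$. Your handling of the extra colour is clean: for $t=1$ the inequality $\chi_{ld}(T)\ge\chi(T)=2$ already suffices; for $t\ge 2$ the tree cannot be a star, so the support vertex $s^*$ carrying $\max C_L$ has a non-leaf neighbour $v$, and since $\deg(v)\ge 2$ the weight $w(v)\ge f(s^*)+1>\max C_L$ lies outside $C_L$. The only implicit step worth stating explicitly is that a support vertex in a tree on $n\ge 3$ vertices has degree at least $2$ (else $T=K_2$), which you use when asserting $\deg(s^*)\ge 2$; you do mention this, so the proof stands.
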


	\begin{theorem}\cite{Nalliah4},\cite{Handa}\label{cycle}
		\begin{equation*}\chi_{ld}(C_n)=
			\begin{cases}
				2 &\text{n=4},\\
				3 &\text{n$\ \in \{3,12\}$},\\
				4 &\text{n$\ \in \{6,8,10,14\}$},\\
			 	5 &\text{n$\ \in \{5,7,9\}$}.
			\end{cases}
		\end{equation*}
		\begin{align*}
			4\leq \chi_{ld}(C_n)\leq 5;\ &n\in \{11,13\},\\
			4\leq \chi_{ld}(C_n)\leq 6;\ &n\geq 15.
		\end{align*}
  \end{theorem}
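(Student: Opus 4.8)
The plan is to sandwich $\chi_{ld}(C_n)$ between two lower-bound parameters and a family of explicit labelings. Throughout write $v_1v_2\cdots v_n$ for the cycle, $f_i=f(v_i)$ for a bijection $f\colon V\to\{1,\dots,n\}$, and $w_i=w(v_i)=f_{i-1}+f_{i+1}$ (indices mod $n$), so $\sum_i w_i=2\sum_i f_i=n(n+1)$. Two lower bounds are available at once. First, $\chi_{ld}(C_n)\ge\chi(C_n)$, which is $2$ for even $n$ and $3$ for odd $n$. Second, for $n\ne 4$ the vertices $v_i$ and $v_{i+2}$ have $N(v_i)\triangle N(v_{i+2})=\{v_{i-1},v_{i+3}\}$ of size $2$, so Proposition~\ref{handaprop} gives $w_i\ne w_{i+2}$; hence every colour class is an independent set of the square $C_n^2$ and $\chi_{ld}(C_n)\ge\chi(C_n^2)$. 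Using $\chi(C_5^2)=5$, $\chi(C_n^2)=4$ for $n\ge 7$ with $3\nmid n$, and $\chi(C_n^2)=3$ for $3\mid n$ with $n\ge 6$, this already yields $\chi_{ld}(C_5)\ge5$ and $\chi_{ld}(C_n)\ge4$ for $n\in\{7,8,10,11,13,14\}$ and for all $n\ge15$ with $3\nmid n$.

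To raise the bound when $3\mid n$ I would show that a $3$-colouring is impossible unless $n=12$. If $\chi_{ld}(C_n)=3$, then the induced proper $3$-colouring of $C_n^2$ must use the three maximum independent sets $\{v_i:i\equiv r\pmod 3\}$, so $w_i=w_{i+3}$ for all $i$. Putting $c_j:=f_j-f_{j+3}$, the relation $w_i=w_{i+3}$ becomes $c_{i-1}=-c_{i+1}$, i.e.\ $c_{j+2}=-c_j$, hence $c_{j+4}=c_j$ and $c$ has period $\gcd(4,n)$. If $4\nmid n$, then $\gcd(4,n)\in\{1,2\}$ forces $c_j=0$ for all $j$, i.e.\ $f_j=f_{j+3}$, a contradiction. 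If $4\mid n$ (so $12\mid n$), then along the step-$3$ necklace $v_1v_4v_7v_{10}\cdots$ the consecutive differences $c_1,c_4,c_7,c_{10}$ equal $c_1,-c_2,-c_1,c_2$ and sum to $0$, whence $f_{13}=f_1$; this is a contradiction as soon as $v_{13}\ne v_1$, i.e.\ unless $n=12$. Thus $\chi_{ld}(C_n)\ge4$ for every multiple of $3$ except $n=12$, so that, combined with the previous paragraph, $\chi_{ld}(C_n)\ge4$ for all $n\ge5$ except $n=12$, and $\chi_{ld}(C_5)\ge5$.

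For the two sporadic values I would prove $\chi_{ld}(C_7),\chi_{ld}(C_9)\ge5$ by excluding $4$-colourings. In both cases a colour class is an independent set of $C_7^2$ (size $\le2$), resp.\ $C_9^2$ (size $\le3$), so up to rotation and reflection there are only finitely many candidate colour patterns; for each, the linear system $\{w_i=w_j:\text{$v_i,v_j$ share a colour}\}$ together with the bijectivity of $f$ forces a coincidence of labels — typically one derives $2f_k=f_{k-1}+f_{k+1}$ and a parity restriction and, tracking the remaining labels, reaches $f_a=f_b$ for distinct $a,b$ (for the generic pattern on $C_9$ one gets $f_1=f_4$ directly). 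Hence $\chi_{ld}(C_7)=\chi_{ld}(C_9)=5$ once matching labelings are exhibited.

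It remains to realize the upper bounds by explicit bijections. For $n=4$, $(f_1,\dots,f_4)=(1,2,3,4)$ has weights $(6,4,6,4)$, so $\chi_{ld}(C_4)=2$; and $\chi_{ld}(C_3)=3$ is immediate. For $n=12$, labeling the three step-$3$ necklaces by the translated parallelograms $\{1,2,7,8\}$, $\{3,4,9,10\}$, $\{5,6,11,12\}$ — explicitly $(f_1,\dots,f_{12})=(2,10,11,1,4,12,7,3,6,8,9,5)$ — makes $(w_i)=(15,13,11,15,13,11,\dots)$, so $\chi_{ld}(C_{12})=3$. For each remaining $n\le14$ a short direct search produces a labeling attaining $4$ or $5$ as asserted. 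For the infinite family $n\ge15$ (and, with target $5$, for $n\in\{11,13\}$) I would exhibit a construction organized by the residue of $n$ modulo a small integer, assigning the labels in a bounded number of monotone blocks so that $w_i=f_{i-1}+f_{i+1}$ runs through at most $6$ (resp.\ $5$) values while consecutive weights stay distinct, and then verify these two facts block by block and at the block seams. The main obstacles are exactly the pattern-by-pattern exclusion of $4$-colourings for $C_7$ and $C_9$ and the design and verification of the general six-weight labeling for $n\ge15$; the divisibility-by-$3$ step, by contrast, comes out cleanly from the recursion $c_{j+2}=-c_j$ and the length of the step-$3$ necklace.
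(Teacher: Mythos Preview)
The paper does not prove this theorem; it is quoted from \cite{Nalliah4} and \cite{Handa} as a known result in the preliminaries section, so there is no in-paper proof to compare your proposal against.

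That said, your outline is a sound route to the result and is worth a brief assessment. The two systematic lower bounds --- $\chi_{ld}(C_n)\ge\chi(C_n)$ and, via Proposition~\ref{handaprop}, $\chi_{ld}(C_n)\ge\chi(C_n^2)$ --- are correct and already give the floor $4$ for all $n\ge5$ with $3\nmid n$ (and $5$ for $n=5$, since $C_5^2=K_5$). Your recursion $c_{j+2}=-c_j$ derived from $w_i=w_{i+3}$ is valid and cleanly rules out three weights for every multiple of $3$ other than $12$; the $C_{12}$ labeling you exhibit checks out with weight pattern $(15,13,11)$ repeating. The parts you yourself flag as obstacles are genuine gaps: the exclusion of four weights on $C_7$ and $C_9$ requires a finite but nontrivial case analysis (for $C_9$ the colour-class size profiles $(3,3,2,1)$ and $(3,2,2,2)$ each admit several inequivalent placements, and the linear constraints do not always collapse as quickly as ``$f_1=f_4$ directly''), and the promised six-weight construction for all $n\ge15$ still has to be written down explicitly and verified at the seams between blocks. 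Those two pieces are where the actual work lies; the rest of the plan is solid.
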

		\begin{theorem}\cite{Nalliah4},\cite{Handa}\label{path}
			\begin{equation*}
				\chi_{ld}(P_n)=
				\begin{cases}
					2 &\text{n$\ \in \{2,3\}$},\\
					3 &\text{n$\ \in \{5,11\}$},\\
					4 &\text{n$\ \in \{4,6,7,8,9,10\}$}.
				\end{cases}
				\end{equation*}
				\begin{equation*}
						4 \leq \chi_{ld}(P_n)\leq 
						\begin{cases}
							5 &\text{$n\geq 12$, $n$ is even},\\
							6 &\text{$n\geq 13$, $n$ is odd}.
						\end{cases}
				\end{equation*}
		
		\end{theorem}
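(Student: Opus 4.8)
The plan is to treat the three parts of the statement separately: the exact values of $\chi_{ld}(P_n)$ for $2\le n\le 11$, the lower bound $\chi_{ld}(P_n)\ge 4$ for $n\ge 12$, and the upper bounds $5$ for large even $n$ and $6$ for large odd $n$. Throughout I write $P_n=v_1v_2\cdots v_n$, so that $w(v_1)=f(v_2)$, $w(v_n)=f(v_{n-1})$, and $w(v_i)=f(v_{i-1})+f(v_{i+1})$ for $2\le i\le n-1$, and I record the trivial bound $\chi_{ld}(P_n)\ge\chi(P_n)=2$.

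\smallskip
\textbf{Lower bounds.} The first step is to reduce to ordinary graph colouring. For $1\le|i-j|\le 2$ one checks that $|N(v_i)\triangle N(v_j)|\le 2$, and $|N(v_1)\triangle N(v_n)|=2$; hence, by Proposition~\ref{handaprop}, every colouring induced by a local distance antimagic labelling of $P_n$ is a proper colouring of $H_n:=P_n^2+v_1v_n$, so $\chi_{ld}(P_n)\ge\chi(H_n)$. Since any three consecutive vertices of $P_n^2$ form a triangle, a proper $3$-colouring of $P_n^2$ is forced to be $3$-periodic; so when $n\equiv 1\pmod 3$ the endpoints $v_1,v_n$ would get the same colour, giving $\chi(H_n)=4$ and hence $\chi_{ld}(P_n)\ge 4$ for all such $n$ (covering $n\in\{4,7,10\}$ and every $n\equiv 1\pmod 3$ with $n\ge 13$). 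When $n\not\equiv 1\pmod 3$ we have $\chi(H_n)=3$, and one checks that the $3$-colouring of $H_n$ is unique up to permutation of colours, with colour classes $\{v_i:i\equiv r\pmod 3\}$, $r=0,1,2$. Thus $\chi_{ld}(P_n)=3$ would force $w$ to be constant on each such class, i.e.\ $f(v_{i-1})+f(v_{i+1})$ depends only on $i\bmod 3$, equivalently $f(v_i)+f(v_{i+2})$ is periodic in $i$ with period $3$ for $1\le i\le n-2$. Writing $g_i=f(v_{i+3})-f(v_i)$, this periodicity amounts to $g_i+g_{i+2}=0$ for $1\le i\le n-5$, so the $g_i$ with $i$ of fixed parity have constant absolute value and alternate in sign; telescoping along $v_1,v_4,v_7,v_{10},v_{13}$ then yields (using $g_4=-g_2$, $g_7=-g_1$, $g_{10}=g_2$) $f(v_{13})-f(v_1)=g_1+g_4+g_7+g_{10}=g_1-g_2-g_1+g_2=0$, contradicting injectivity of $f$ once $n\ge 13$. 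The finitely many remaining values $n\in\{6,8,9,12\}$ are handled by the same bookkeeping together with the endpoint equations $w(v_1)=f(v_2)$, $w(v_n)=f(v_{n-1})$: for instance for $n=6$ the classes give $f(v_3)+f(v_5)=w(v_4)=w(v_1)=f(v_2)$ and $f(v_2)+f(v_4)=w(v_3)=w(v_6)=f(v_5)$, whence $f(v_3)+f(v_4)=0$, and $n=8,9,12$ collapse analogously. This rules out $3$ colours for every $n\ge 6$ except $n=11$, and combined with the $n\equiv 1\pmod 3$ argument it gives $\chi_{ld}(P_n)\ge 4$ for all $n\ge 12$ and all the lower bounds needed in the exact-value range.

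\smallskip
\textbf{Upper bounds.} For the constructive side I would use a zig-zag labelling that collapses the interior weights. For $n$ even, put the $n/2$ largest labels on the odd-indexed vertices and the $n/2$ smallest on the even-indexed vertices, each block arranged in the order $a_1,a_m,a_2,a_{m-1},a_3,a_{m-2},\dots$ where $a_1<\cdots<a_m$ are the labels in that block; then sums of consecutive labels within a block take only two values, so the interior odd-indexed vertices receive one of two small weights and the interior even-indexed vertices one of two large weights, the four being pairwise distinct. Choosing the orientation of the even block so that $f(v_2)$ equals the small interior weight that $v_3$ does not attain — permitted, since Proposition~\ref{handaprop} only forbids $w(v_1)=w(v_3)$ — folds $w(v_1)$ into this set, leaving $w(v_n)=f(v_{n-1})$ as a fifth colour and giving $\chi_{ld}(P_n)\le 5$; a representative instance is $n=12$, $f=(7,6,12,1,8,5,11,2,9,4,10,3)$, with weight sequence $6,19,7,20,6,19,7,20,6,19,7,10$. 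For $n$ odd the two parity classes differ in size by one and both endpoints are odd-indexed, while Proposition~\ref{handaprop} forces $w(v_1)\ne w(v_n)$; so at most one endpoint weight can be absorbed into the interior values, and taking the labelling of the longer block in an unfolded orientation leaves both endpoint weights standing on their own, giving $\chi_{ld}(P_n)\le 6$.

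\smallskip
\textbf{Small cases and the main obstacle.} For $2\le n\le 11$ the upper bounds come from explicit labellings (found by hand or by a short search): $(1,2)$ and $(1,2,3)$ for $n=2,3$; $(3,1,4,2)$ for $n=4$; $(1,5,3,4,2)$ for $n=5$; labellings such as $(2,4,1,5,3,6)$ for $n=6$ and similar ones realising four colours for $7\le n\le 10$; and $(9,7,2,8,5,3,10,4,1,11,6)$ for $n=11$, whose weight sequence $7,11,15,7,11,15,7,11,15,7,11$ uses only three colours. Matching these against the lower bounds above gives the exact values and shows that $n=5,11$ are exactly the lengths for which the forced $3$-colouring system is solvable. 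The main obstacle is precisely this step: proving that the linear system imposed by the essentially unique $3$-colouring of $H_n$ has no bijective solution for every $n\ge 12$, and — what makes the classification delicate — that $n=5,11$ are the only exceptions among $n\ge 4$. Together with the observation $\chi(H_n)=4$ for $n\equiv 1\pmod 3$, the telescoping identity $f(v_{13})=f(v_1)$ handles all $n\ge 13$, leaving only the cases $n\le 12$ to be checked directly.
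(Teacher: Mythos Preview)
This theorem is quoted in the paper's ``Known Results'' section with citations to \cite{Nalliah4} and \cite{Handa}; the paper gives no proof of its own, so there is nothing to compare your proposal against. Your write-up is therefore an independent proof sketch of a result the authors simply import.

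On its own merits, your outline is largely sound but has a few soft spots. First, the claim that $|N(v_i)\triangle N(v_j)|\le 2$ for $|i-j|=1$ is false for interior vertices (the symmetric difference has four elements there); this is harmless because adjacent vertices already receive distinct weights by definition, but you should say so rather than invoke Proposition~\ref{handaprop} where it does not apply. Second, the telescoping argument is clean and correct for $n\ge 13$ with $n\not\equiv 1\pmod 3$, and the $n\equiv 1\pmod 3$ case is handled by $\chi(H_n)=4$; however, the remaining small cases $n\in\{6,8,9,12\}$ are only asserted to ``collapse analogously'', and at least $n=12$ deserves an explicit check since it sits at the boundary of the stated range. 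Third, your upper-bound constructions for general even and odd $n$ are described schematically; the $n=12$ and $n=11$ examples check out, but the general recipe (``choosing the orientation of the even block so that $f(v_2)$ equals the small interior weight that $v_3$ does not attain'') would need to be written out and verified for arbitrary $n$ to count as a proof. None of these are fatal, but each is a place where a referee of the original papers would ask for more detail.
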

  \begin{theorem}\cite{Handa}\label{multipartite}
		The complete multipartite graph $G=K_{n_1,n_2,\dots,n_r}$
		is local distance antimagic
		with $\chi_{ld}(G) = r$.
	\end{theorem}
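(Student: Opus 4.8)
The plan is to separate the two assertions: that $G$ is local distance antimagic, and that the optimal number of induced colors equals $r$. The lower bound is immediate: selecting one vertex from each of the $r$ (non-empty) parts gives a clique $K_r$, so $\chi(G)=r$, and since $\chi_{ld}(G)\ge\chi(G)$ we have $\chi_{ld}(G)\ge r$. (Recall $r\ge 2$, as $G$ has no isolated vertices.) Everything then comes down to exhibiting a local distance antimagic labeling whose induced coloring uses only $r$ colors.

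The crucial observation is that all vertices of a fixed part $V_i$ share the open neighborhood $N(v)=V\setminus V_i$. Hence, writing $S=\binom{n+1}{2}$ and $s_i=\sum_{x\in V_i}f(x)$ for a bijection $f\colon V\to\{1,\dots,n\}$, every $v\in V_i$ has $w(v)=S-s_i$; that is, $w$ is constant on each part. Since edges of $G$ run only between distinct parts, $f$ is a local distance antimagic labeling precisely when the part-sums $s_1,\dots,s_r$ are pairwise distinct, and whenever that holds the induced coloring has exactly $r$ colors, namely $S-s_1,\dots,S-s_r$. So the problem reduces to partitioning $\{1,2,\dots,n\}$ into blocks of prescribed sizes $n_1,\dots,n_r$ with pairwise distinct block sums.

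To do this, first reindex the parts so that $n_1\le n_2\le\cdots\le n_r$, and then give $V_i$ the block of consecutive integers $B_i=\{N_{i-1}+1,\dots,N_i\}$, where $N_0=0$ and $N_i=n_1+\cdots+n_i$. Every element of $B_i$ exceeds every element of $B_{i-1}$, so $s_i\ge n_i(N_{i-1}+1)$ whereas $s_{i-1}\le n_{i-1}N_{i-1}$; combined with $n_i\ge n_{i-1}\ge 1$ this yields $s_{i-1}<s_i$. Thus $s_1<s_2<\cdots<s_r$ are distinct, so the labeling is local distance antimagic and its induced coloring uses exactly $r$ colors, matching the lower bound.

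I do not anticipate a real obstacle here: once the weights are seen to be constant on parts, the argument is essentially forced. The only subtlety is that the consecutive-block assignment can fail if the parts are not taken in non-decreasing order of size — sizes $2$ then $1$ give blocks $\{1,2\}$ and $\{3\}$, both with sum $3$ — so the initial sorting step is genuinely needed.
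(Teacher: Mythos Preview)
Your proof is correct. Note, however, that the paper does not actually prove this theorem: it is quoted in Section~2 (``Known Results'') with a citation to \cite{Handa} and no argument given, so there is no proof in the present paper to compare against. That said, the paper's ``first solution'' to Problem~3.1 treats the special case $G=K_{p-1}+\overline{K_{n-p+1}}$ (i.e.\ $p-1$ singleton parts and one part of size $n-p+1$) by exactly your consecutive-block idea---label the small parts with $1,\dots,p-1$ and the large part with $p,\dots,n$---so your approach is certainly in the spirit of what the authors have in mind. Your observation that the parts must first be sorted by size (lest equal block-sums arise, as in your $\{1,2\}$ vs.\ $\{3\}$ example) is a genuine point that the special case obscures.
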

 \begin{theorem}\cite{Handa}
     The wheel $W_n$, $n\geq 3$, is local distance antimagic with $3\leq \chi_{ld}(W_n) \leq 7$.
 \end{theorem}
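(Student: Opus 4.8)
The plan is to exploit the fact that the two kinds of edges of $W_n$ — the spokes joining the hub to the rim and the edges of the rim cycle $C_n$ — interact only mildly. Write $c$ for the hub and $v_0,v_1,\dots,v_{n-1}$ for the rim in cyclic order, and let $f\colon V(W_n)\to\{1,2,\dots,n+1\}$ be the labeling to be built. Since $N_{W_n}(v_i)=\{v_{i-1},v_{i+1},c\}$, we have $w(v_i)=f(v_{i-1})+f(v_{i+1})+f(c)$, so $f(c)$ enters every rim weight as the same additive constant; hence the coloring induced on the rim is, up to this constant, exactly the coloring of the cycle $C_n$ induced by $f|_{\mathrm{rim}}$ read as a labeling of $C_n$. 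Also $N_{W_n}(c)=\{v_0,\dots,v_{n-1}\}$, so $w(c)=\sum_i f(v_i)$ is the sum of all rim labels. The lower bound is immediate: $\chi_{ld}(W_n)\ge\chi(W_n)$, and since the hub is adjacent to every rim vertex, $\chi(W_n)=\chi(C_n)+1\ge 3$ (with $\chi(W_3)=\chi(K_4)=4$). So the substance is the upper bound, which will also display a valid labeling and thereby show that $W_n$ is local distance antimagic.

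For the upper bound I would give the hub the smallest label, $f(c)=1$, and label the rim by $\{2,3,\dots,n+1\}$ as follows. Choose a local distance antimagic labeling $g$ of the cycle $C_n$ that realizes the minimum number $\chi_{ld}(C_n)$ of colors; by Theorem~\ref{cycle} one has $\chi_{ld}(C_n)\le 6$ for every $n\ge 3$. Set $f(v_i)=g(v_i)+1$. Because $C_n$ is $2$-regular, adding $1$ to every label raises every cycle-weight by the constant $2$, so $f|_{\mathrm{rim}}$ is still a local distance antimagic labeling of the rim cycle: the numbers $f(v_{i-1})+f(v_{i+1})$ are distinct on adjacent rim vertices and take at most $\chi_{ld}(C_n)\le 6$ values. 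Restoring the constant $f(c)=1$, the rim vertices of $W_n$ then receive at most $6$ colors, and adjacent rim vertices get different colors.

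It then remains to check the spokes, i.e.\ that $w(c)\ne w(v_i)$ for all $i$. Here $w(c)=2+3+\cdots+(n+1)=\tfrac{n(n+3)}{2}$, whereas each rim weight satisfies $w(v_i)=f(v_{i-1})+f(v_{i+1})+1\le (n+1)+n+1=2n+2$. Since $n^2-n-4>0$ for every $n\ge 3$, we get $\tfrac{n(n+3)}{2}>2n+2\ge w(v_i)$, so $w(c)$ is strictly the largest weight in the graph; in particular the coloring is proper and the hub contributes exactly one further color. This yields $\chi_{ld}(W_n)\le 6+1=7$ for all $n\ge 3$.

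The construction itself is short, so the single point that needs care is arranging that $w(c)$ avoids every rim weight uniformly in $n$: putting the label $1$ on the hub forces $w(c)$ to be the unique maximum weight and disposes of this in one inequality (putting the largest label $n+1$ on the hub would only work for $n\ge 6$ and would leave $n\in\{3,4,5\}$ to be treated separately). The real difficulty is deferred to the cited input $\chi_{ld}(C_n)\le 6$; a self-contained argument would instead have to show directly, on the rim cycle labeled by $\{2,\dots,n+1\}$, that the two-neighbour sums $f(v_{i-1})+f(v_{i+1})$ can be forced into at most six classes while still separating adjacent vertices — the usual device being to split the labels into a ``small'' half and a ``large'' half, place each half around the rim in a \emph{zig-zag} order so that consecutive (distance-two) sums alternate among only a couple of values, and then patch up the wrap-around — and, when $n$ is odd, the parity defect — by hand.
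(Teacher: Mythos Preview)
The paper does not give its own proof of this statement; it appears in Section~2 (``Known Results'') and is simply quoted from \cite{Handa}. So there is nothing in the present paper to compare your argument against.

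That said, your proof is correct. Putting the label $1$ on the hub and shifting an optimal local distance antimagic labeling of $C_n$ by $+1$ onto the rim works because the cycle is $2$-regular, so the shift adds the constant $2$ to every cycle weight and the further constant $f(c)=1$ to every rim weight in $W_n$; hence adjacent rim vertices remain distinguished and the rim uses at most $\chi_{ld}(C_n)\le 6$ colors by Theorem~\ref{cycle}. The inequality $\tfrac{n(n+3)}{2}>2n+2$, equivalent to $n^2-n-4>0$, holds for all $n\ge 3$, so the hub weight is strictly the maximum and contributes one new color, giving $\chi_{ld}(W_n)\le 7$. The lower bound $\chi_{ld}(W_n)\ge\chi(W_n)\ge 3$ is immediate. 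Note only that your argument is not self-contained: it rests on the bound $\chi_{ld}(C_n)\le 6$, which in this paper is also merely cited (Theorem~\ref{cycle}); your closing paragraph correctly flags this.
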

 \begin{theorem}\cite{Handa}
     The complete graph $K_n$, $n\geq 2$, is local distance antimagic with $\chi_{ld}(K_n)=n$.
 \end{theorem}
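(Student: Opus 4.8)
The plan is to observe that $K_n$ satisfies every requirement almost automatically, because in the complete graph the weight function is essentially forced. First I would note that for any vertex $v$ of $K_n$ we have $N(v)=V\setminus\{v\}$, so for an arbitrary bijection $f\colon V\to\{1,2,\dots,n\}$,
\[
w(v)=\sum_{x\in N(v)}f(x)=\sum_{x\in V}f(x)-f(v)=\frac{n(n+1)}{2}-f(v).
\]
Since $f$ is injective, the labels $f(v)$ are pairwise distinct, and hence so are the weights $w(v)$. In particular $w(u)\neq w(v)$ for every edge $uv$, so \emph{every} bijection $f$ is a local distance antimagic labeling of $K_n$; this shows $K_n$ is local distance antimagic.

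For the upper bound I would observe that the vertex coloring induced by any such $f$ uses exactly $n$ colors, since the $n$ weights are all distinct; hence $\chi_{ld}(K_n)\leq n$. For the lower bound I would invoke the general inequality $\chi_{ld}(G)\geq\chi(G)$ recorded in the introduction together with $\chi(K_n)=n$ (any two vertices are adjacent, so a proper coloring needs $n$ colors), giving $\chi_{ld}(K_n)\geq n$. Combining the two bounds yields $\chi_{ld}(K_n)=n$.

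There is essentially no obstacle here: the single substantive step is the algebraic identity $w(v)=\tfrac{n(n+1)}{2}-f(v)$, and the remainder is bookkeeping. The point worth flagging is that, in contrast to most labeling problems, no clever construction is required — any bijection works — so all the content of the theorem lies in the exact value $n$, which is pinned down from above by the distinctness of weights and from below by $\chi(K_n)=n$.
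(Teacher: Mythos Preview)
Your proof is correct and is the standard argument. Note that the paper does not actually supply a proof of this statement---it is listed among the known results and attributed to \cite{Handa}---so there is no in-paper proof to compare against; your argument is exactly the natural one and almost certainly coincides with the original.
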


 \begin{theorem}\cite{yamini}\cite{Handa}\label{friendship}
   The friendship graph $F_n$, $n\geq 2$, is local distance antimagic with $\chi_{ld}(F_n)=2n+1$.
 \end{theorem}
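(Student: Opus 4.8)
The plan is to split the claim into producing a local distance antimagic labeling together with a matching lower bound on the number of induced colours, exploiting the rigid structure of $F_n$. Denote the central vertex by $c$ and the outer vertices by $u_i,v_i$ for $1\le i\le n$, so that $E(F_n)=\{cu_i,cv_i,u_iv_i:1\le i\le n\}$ and $|V(F_n)|=2n+1$. For any bijection $f\colon V(F_n)\to\{1,\dots,2n+1\}$ I would first record the three weight formulas $w(c)=(n+1)(2n+1)-f(c)$, $w(u_i)=f(c)+f(v_i)$, and $w(v_i)=f(c)+f(u_i)$.

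For the lower bound, I would note that the weights of the $2n$ outer vertices form the set $f(c)+\{f(x):x\ \text{outer}\}$, which consists of $2n$ distinct integers because $f$ is injective; equivalently, any two outer vertices $x,y$ satisfy $|N(x)\triangle N(y)|=2$, so Proposition~\ref{handaprop} already forces their weights to differ. Since $c$ is adjacent to every outer vertex, any local distance antimagic labeling must also give $w(c)$ a value different from all $2n$ outer weights. Hence every local distance antimagic labeling of $F_n$ induces exactly $2n+1$ colours; this both gives $\chi_{ld}(F_n)\ge 2n+1$ and means that exhibiting any single valid labeling will also establish the upper bound.

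To finish I would exhibit such a labeling: set $f(c)=2n+1$ and assign $1,2,\dots,2n$ to the outer vertices in any order. The edges $u_iv_i$ are harmless since $w(u_i)-w(v_i)=f(v_i)-f(u_i)\ne 0$, and for the edges incident to $c$ one checks that all outer weights lie in $\{2n+2,\dots,4n+1\}$ whereas $w(c)=n(2n+1)$, so it suffices to verify $n(2n+1)>4n+1$ for every $n\ge 2$. This labeling is therefore local distance antimagic and uses exactly $2n+1$ colours, so $\chi_{ld}(F_n)=2n+1$.

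I do not anticipate a real obstacle: the only delicate points are the boundary case $n=2$ of the inequality $n(2n+1)>4n+1$ (where it reads $10>9$) and the conceptual observation that, in contrast to most graphs, the lower bound here is not obtained by a clever argument but is forced by the neighbourhood structure, every valid labeling using the same number of colours.
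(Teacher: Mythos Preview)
Your proof is correct. Note, however, that the paper does not actually prove this theorem: it is quoted as a known result from \cite{yamini} and \cite{Handa}, so there is no ``paper's own proof'' to compare against. That said, your lower-bound argument---observing that the $2n$ outer weights are $f(c)+\{f(x):x\text{ outer}\}$ and hence pairwise distinct by injectivity of $f$---is precisely the mechanism the paper invokes later when bounding $\chi_{ld}(F_n+\overline{K_m})$ and $\chi_{ld}(F_n+B_{n,n})$ from below, so your approach is fully in line with the paper's methods.
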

 
 \begin{theorem}\cite{yamini}\label{bistar}
   The bistar graph $B_{m,n}$, $m,n\geq 2$, is local distance antimagic with $\chi_{ld}(B_{m,n})=4.$
 \end{theorem}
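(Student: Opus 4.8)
The plan is to exploit the extremely rigid structure of the coloring induced on $B_{m,n}$ by any labeling. Let $u,v$ be the two central vertices, adjacent to each other, with pendant vertices $u_1,\dots,u_m$ at $u$ and $v_1,\dots,v_n$ at $v$, so that $|V(B_{m,n})|=m+n+2$. For any bijection $f\colon V\to\{1,\dots,m+n+2\}$ one has $N(u_i)=\{u\}$ and $N(v_j)=\{v\}$, hence $w(u_i)=f(u)$ and $w(v_j)=f(v)$ for all $i,j$, while $w(u)=f(v)+\sum_{i=1}^{m}f(u_i)$ and $w(v)=f(u)+\sum_{j=1}^{n}f(v_j)$. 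Thus the induced coloring uses only the four values $f(u),f(v),w(u),w(v)$, and since $m,n\ge 1$ all four of them actually occur.

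For the lower bound I would show that, for every local distance antimagic labeling, these four values are pairwise distinct, which immediately gives $\chi_{ld}(B_{m,n})\ge 4$. We get $f(u)\ne f(v)$ from bijectivity; we get $w(u)\ne w(v)$, $w(u)\ne f(u)$ and $w(v)\ne f(v)$ because $uv$, $uu_1$ and $vv_1$ are edges and the induced coloring is proper; and we get $w(u)\ne f(v)$ and $w(v)\ne f(u)$ from the strict inequalities $w(u)=f(v)+\sum_i f(u_i)>f(v)$ and $w(v)=f(u)+\sum_j f(v_j)>f(u)$. Note that Theorem~\ref{nalliahleaf} alone gives only $\chi_{ld}(B_{m,n})\ge 3$; the genuine content of the lower bound is this last observation, that the central weights are forced to strictly exceed the pendant colors.

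For the upper bound it is then enough to produce a single local distance antimagic labeling, since the previous paragraph guarantees it will use exactly four colors. By symmetry assume $m\le n$ and set $f(u_i)=i$ $(1\le i\le m)$, $f(v_j)=m+j$ $(1\le j\le n)$, $f(u)=m+n+1$, $f(v)=m+n+2$; this is a bijection onto $\{1,\dots,m+n+2\}$. A short computation gives $w(u)=(m+n+2)+\binom{m+1}{2}$ and $w(v)=(m+n+1)+mn+\binom{n+1}{2}$, whence $w(v)-w(u)=mn-1+\tfrac12(n-m)(n+m+1)\ge mn-1\ge 3$ because $m,n\ge 2$. Therefore $w(v)>w(u)>f(v)>f(u)$, the four colors are distinct, and the only edges $uu_i$, $vv_j$ and $uv$ are all properly colored, so $f$ is a local distance antimagic labeling. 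Combined with the lower bound this yields $\chi_{ld}(B_{m,n})=4$.

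The argument is largely computational once the structural remark is in place; the one point that needs care is verifying $w(u)\ne w(v)$ for the chosen labeling uniformly over all $m,n\ge 2$ (the case $m=n=2$ being tightest), and keeping in mind that the hypothesis $m,n\ge 2$ is really used here — for $m=n=1$ the bistar degenerates to $P_4$, for which $\chi_{ld}=2$ by Theorem~\ref{path}.
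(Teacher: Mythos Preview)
Your argument is correct and complete for $m,n\ge 2$. The paper itself does not prove this statement---it is quoted as a known result from \cite{yamini}---but your lower-bound reasoning (that $w(u)=f(v)$ would force $\sum_i f(u_i)=0$) is exactly the argument the paper later deploys, inside the proof that $\chi_{ld}(F_n+B_{n,n})=2n+5$, to show that the bistar portion contributes four distinct weights.

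One small factual slip in your closing aside: by Theorem~\ref{path}, $\chi_{ld}(P_4)=4$, not $2$. Your specific labeling does fail at $m=n=1$ (it yields $w(u)=w(v)=5$), so the hypothesis $m,n\ge 2$ is genuinely used in your construction; but the conclusion $\chi_{ld}=4$ still holds for $P_4$, so the restriction is not intrinsic to the theorem.
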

	In this paper, we study the local distance antimagic chromatic number for the join of graphs and the lexicographic product of some classes of graphs with the complement of the complete graph and check if the results of the chromatic number of a graph hold good for local distance antimagic chromatic number also. We assume that all graphs $G$ considered in the paper admit a local distance labeling, with local distance antimagic chromatic number as $\chi_{ld}(G)$.    
	\section{The Main Results}

	\subsection{Some Results on Local Distance Antimagic Labeling of Graphs}
 Handa, in his thesis \cite{Handathesis}, studied the local distance antimagic labeling of graphs. He posted the following problem.
	\begin{problem}
		Given a fixed integer $p\in \{2,3,\hdots,n\}$, does there exist a graph $G$ of order $n$, with $\chi_{ld}(G)=p$?
  \end{problem}
  We provide two solutions to the problem.\\

\noindent{\textit{Solution:} }
			Consider the graph $G=K_{p-1}+\overline{K_{n-p+1}}$. We claim that $ \chi_{ld}(G)= p$.  Define a labeling $f$ for $G$, by labeling the $p-1$ vertices of $ K_{p-1}$, using integers from the set $\{1,2,3 \hdots , p-1\}$ in any order and then labeling the remaining $n-p+1$ vertices of $\overline{K_{n-p+1}}$, using integers from the set $\{p, p+1,\hdots,n\}$ in any order. Clearly, the weight of any vertex in $ \overline{K_{n-p+1}}$ is $\frac{(p-1)p}{2}$, while the weight of any vertex $v$ in $K_{p-1}$ is $\frac{(p-1)p}{2}- f(v)+\frac{(n-p+1)(n+p)}{2}$.
			All the $p-1$ vertices of  $ K_{p-1}$ receive distinct weights under $f$, while all the vertices in  $ \overline{K_{n-p+1}}$ receive the same weight, which is distinct from the weights of all the vertices of $ K_{p-1}$, as the maximum value that $f(v)$ can take is $p-1$. So the total number of distinct weights is p. Thus $\chi_{ld}(G)\leq p$.
			Now, 
			$\chi (G)= \chi (K_{p-1}+ \overline{K_{n-p+1}})=\chi(K_{p-1})+\chi( \overline{K_{n-p+1}}) = p-1+1 = p$, and 
			since $\chi_{ld}(G)\geq \chi(G)=p$, we get  $\chi_{ld}(G)=p$. \\
			We now present the second solution to the problem. For $p=2,3,\dots,n-1$, construct a $p$ partite graph $G$ with the first $p-1$ partite sets having $x_i$ elements, for $i=1,2,\dots,p-1$ where $x_i\geq 1$ and the last partite set having $n-\sum_{i=1}^{p-1}x_i$ elements, provided $n>\sum_{i=1}^{p-1}x_i$, that is, $G=K_{x_1,x_2,\dots,x_{p-1},n-\sum_{i=1}^{p-1}x_i}$. Clearly $G$ is graph of order $n$ such that $\chi_{ld}(G)=p$. For $p=n$, we know that $K_{n}$ is graph of order $n$, such that $\chi_{ld}(K_{n})=n$. \\
   
  In the first solution we see that, $\chi_{ld}(K_{n-k-1}+O_{k+1})=\chi_{ld}(K_{n-k-1})+\chi_{ld}(O_{k+1})$. Taking motivation from this result, in the next section, we study the join of two graphs and see for which graphs $G$ and $H$, $\chi_{ld}(G+H)=\chi_{ld}(G)+\chi_{ld}(H)$. 
		\subsection{$\chi_{ld}$ of Join of Graphs}
 If $G$ and $H$ are two graphs then $\chi(G+H)=\chi(G)+\chi(H)$  (see \cite{chart}).
  Further, using the definition of $\chi_{ld}$, we have an obvious lower bound for $\chi_{ld}(G+H)$, i.e., $\chi_{ld}(G+H) \geq \chi(G+H) =\chi(G)+\chi(H)$.\\

Our next goal is to provide an upper bound for the local distance antimagic chromatic number of the join of two graphs $G$ and $H$.

	\begin{theorem}\label{jointhm1}
		Let $G$ be a graph of order $n$ and $H$ be a graph of order $m$ such that $n\leq m$. Let $\Delta_{H}$ be the maximum degree of a vertex of $H$ and $\delta_{G}$ be the minimum degree of a vertex of $G$. If
		\begin{equation}\label{equation0}
			\Delta_{H}(m+n)-\frac{\Delta_{H}(\Delta_{H}-1)}{2}-\frac{\delta_{G}(\delta_{G}+1)}{2}<2nm+\frac{(m-n)(m+n+1)}{2},
		\end{equation} then $\chi_{ld}(G+H)\leq \chi_{ld}(G)+\chi_{ld}(H)$.
	\end{theorem}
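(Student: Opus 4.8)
The plan is to construct a local distance antimagic labeling of $G+H$ that induces exactly $\chi_{ld}(G)+\chi_{ld}(H)$ colors, by starting from an optimal local distance antimagic labeling $g$ of $G$ and an optimal one $h$ of $H$ and "gluing" them. Since in $G+H$ every vertex of $G$ is adjacent to every vertex of $H$ and vice versa, the weight of a vertex $u\in V(G)$ becomes $w_{G+H}(u)=w_G(u)+\sigma_H$ where $\sigma_H$ is the sum of the labels placed on $V(H)$, and symmetrically $w_{G+H}(v)=w_H(v)+\sigma_G$ for $v\in V(H)$. The natural choice is to assign the small labels $\{1,2,\dots,n\}$ to $V(G)$ (via the optimal labeling of $G$, which is a bijection onto $\{1,\dots,n\}$) and the large labels $\{n+1,\dots,n+m\}$ to $V(H)$ (via the optimal labeling of $H$ shifted up by $n$). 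With this choice $\sigma_G=\binom{n+1}{2}$ and $\sigma_H=2nm+\binom{m-n}{2}+\dots$ — more precisely $\sigma_H=\sum_{i=n+1}^{n+m} i = nm+\binom{m+1}{2}$, which after rearranging is exactly the right-hand side of inequality~\eqref{equation0} plus a constant.

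First I would verify that the internal coloring on each side is preserved: two adjacent vertices within $G$ receive weights differing by the same additive constant $\sigma_H$, so they stay distinctly colored; likewise within $H$. Hence the number of colors on the $G$-side is at most $\chi_{ld}(G)$ and on the $H$-side at most $\chi_{ld}(H)$, giving at most $\chi_{ld}(G)+\chi_{ld}(H)$ colors total, \emph{provided} no $G$-side color coincides with an $H$-side color. So the crux is to show the two color classes are disjoint, i.e. $w_G(u)+\sigma_H \ne w_H(v)+\sigma_G$ for all $u\in V(G)$, $v\in V(H)$. Equivalently $w_H(v)-w_G(u) \ne \sigma_H-\sigma_G$. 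The key step is to bound both sides: the largest possible value of $w_H(v)$ over $H$-vertices, using the large labels, is at most $\Delta_H\cdot(m+n) - \binom{\Delta_H}{2}$ (the top $\Delta_H$ labels among $\{1,\dots,m+n\}$ summed), and the smallest possible value of $w_G(u)$ is at least $1+2+\dots+\delta_G=\binom{\delta_G+1}{2}$. Therefore $w_H(v)-w_G(u) < \Delta_H(m+n)-\binom{\Delta_H}{2}-\binom{\delta_G+1}{2}$, and by hypothesis \eqref{equation0} this is strictly less than $\sigma_H-\sigma_G$, which forces the required inequality. (Here I would double-check that $\sigma_H - \sigma_G$ equals the right-hand side of \eqref{equation0}, i.e. $nm+\binom{m+1}{2}-\binom{n+1}{2}=2nm+\frac{(m-n)(m+n+1)}{2}$; this is a routine algebraic identity.)

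The main obstacle is precisely pinning down these extremal estimates so that the one-sided bound is clean: one must be careful that $w_H(v)$ is a sum over $N_H(v)$, whose size is at most $\Delta_H$, and that the labels involved all lie in $\{n+1,\dots,n+m\}\subseteq\{1,\dots,n+m\}$, so the crude bound "sum of the $\Delta_H$ largest available labels" is legitimate; similarly $w_G(u)$ is a sum over at least $\delta_G$ labels from $\{1,\dots,n\}$, bounded below by the $\delta_G$ smallest positive integers. A subtlety worth a remark: we only need $\sigma_H>\sigma_G$ together with the gap being large enough, and the condition $n\le m$ guarantees $\sigma_H>\sigma_G$, so the inequality \eqref{equation0} captures exactly when the worst-case $H$-weight still cannot reach down past the additive offset to collide with the best-case $G$-weight. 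Once disjointness of the two color classes is established, the bijection $f$ on $V(G)\cup V(H)$ defined above is a valid local distance antimagic labeling of $G+H$ using at most $\chi_{ld}(G)+\chi_{ld}(H)$ colors, which completes the proof.
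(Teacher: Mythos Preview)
Your proposal is essentially identical to the paper's proof: assign $\{1,\dots,n\}$ to $V(G)$ via an optimal local distance antimagic labeling of $G$ and $\{n+1,\dots,n+m\}$ to $V(H)$ via the $n$-shift of an optimal labeling of $H$, then argue that every weight on the $G$-side strictly exceeds every weight on the $H$-side by bounding the maximum $H$-weight above (using $\Delta_H$) and the minimum $G$-weight below (using $\delta_G$) and invoking inequality~(\ref{equation0}). The extremal estimates and the final algebraic check you flag for verification are exactly the computations the paper carries out.
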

	\begin{proof}
		Let $V(G)=\{v_{1},v_{2},\dots,v_{n}\}$ and $V(H)=\{x_{1},x_{2},\dots,x_{m}\}$ be the vertex sets $G$ and $H$ respectively. Let $f$ be the local distance antimagic labeling of $G$ that assigns $\chi_{ld}(G)$ distinct weights to vertices of $G$ and let $g$ be the local distance antimagic labeling of $H$ that assigns  $\chi_{ld}(H)$ distinct weights to vertices of $H$. Using the labelings $f$ and $g$, we define a new labeling $h$ for vertices of $G+H$ by
		$h(v_{i})=f(v_{i})$ and $h(x_{j})=g(x_{j})+n$, where $1\leq i \leq n$ and $1\leq j \leq m$.\\
		Under this labeling, the weights of vertices are, 
		\begin{align*}
			w_{G+H}(v_{i})&=w_{G}(v_{i})+\frac{m(m+1)}{2}+nm &\text{where $1\leq i \leq n$},\\
			w_{G+H}(x_{j})&=w_{H}(x_{j})+\frac{n(n+1)}{2}+n\cdot deg(x_{j}) &\text{where $1\leq j \leq m$}.
		\end{align*}
		Note that as adjacent vertices among $v_{i}$'s have distinct weights in $G$, they have distinct weights in $G+H$ too. Similarly, the adjacent vertices among $x_{i}$'s have distinct weights. Therefore the labeling $h$ induces $\chi_{ld}(G)$ distinct weights on vertices of $G$ and $\chi_{ld}(H)$ distinct weights on vertices of $H$ in $G+H$. Also as $n\leq m$, we have, $$\frac{n(n+1)}{2}+n\cdot deg(x_{j})<\frac{m(m+1)}{2}+nm.$$ 
		Note that the maximum possible value of $w_{G+H}(x_{j})$ is obtained when the value of $w_H(x_j)$ and $n \cdot \deg (x_j)$, is the maximum, i.e.,
		\begin{equation}\label{equation1}
			=m+(m-1)+\dots+(m-\Delta_{H}+1)+\frac{n(n+1)}{2}+n\cdot \Delta_{H}= \Delta_{H}(n+m)-\frac{\Delta_{H}(\Delta_{H}-1)}{2}+\frac{n(n+1)}{2},
		\end{equation}
		while the minimum possible value of $w_{G+H}(v_{i})$ is  obtained when the value of $w_G(v_i)$ is the minimum, i.e.,
		\begin{equation}\label{equation2}
			=1+2+\dots+\delta_{G}+\frac{m(m+1)}{2}+nm=\frac{\delta_{G}(\delta_{G}+1)}{2}+\frac{m(m+1)}{2}+nm.
		\end{equation}
		Now subtracting Equation \ref{equation1} from Equation \ref{equation2}, we have,
		\begin{align*}
			&=\frac{\delta_{G}(\delta_{G}+1)}{2}+\frac{m(m+1)}{2}+nm-\Delta_{H}(n+m)+\frac{\Delta_{H}(\Delta_{H}-1)}{2}-\frac{n(n+1)}{2}\\
			&=\frac{\delta_{G}(\delta_{G}+1)}{2}+\frac{\Delta_{H}(\Delta_{H}-1)}{2}-\Delta_{H}(m+n)+2nm+\frac{(m-n)(m+n+1)}{2}\\
			&>0 \quad \text{using Equation \ref{equation0}}.
		\end{align*} 
		Therefore, the weight of any vertex of $G$ in $G+H$ always exceeds the weight of any vertex of $H$ in $G+H$. Hence $h$ is a local distance antimagic labeling for $G+H$ and $\chi_{ld}(G+H)\leq \chi_{ld}(G)+\chi_{ld}(H)$.
	\end{proof}
 
	\begin{cor}\label{joincor}
		If $G$ is a graph of order $n$ and $H$ is a graph of order $m$, such that $n\leq m$ and $\Delta_{H}\leq n$, then $\chi_{ld}(G+H)\leq \chi_{ld}(G)+\chi_{ld}(H)$.
	\end{cor}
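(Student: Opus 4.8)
The plan is to obtain this corollary as an immediate consequence of Theorem~\ref{jointhm1}: under the hypotheses $n \le m$ and $\Delta_H \le n$, I will verify that inequality~(\ref{equation0}) holds automatically, so the conclusion then follows at once from Theorem~\ref{jointhm1} (using the very labeling $h$ built there; no new construction is needed). Since $H$ admits a local distance antimagic labeling it has no isolated vertices, so $\Delta_H \ge 1$; together with $\delta_G \ge 0$ this makes the two subtracted terms $\tfrac{\Delta_H(\Delta_H-1)}{2}$ and $\tfrac{\delta_G(\delta_G+1)}{2}$ on the left-hand side of~(\ref{equation0}) nonnegative. Hence it is enough to prove the stronger inequality
\[
\Delta_H(m+n) < 2nm + \frac{(m-n)(m+n+1)}{2},
\]
and, since $\Delta_H \le n$, the left side here is at most $n(m+n) = nm + n^2$, so I would finish by comparing $nm + n^2$ with the right-hand side.

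The comparison splits into two cases. If $n < m$, then $nm + n^2 < 2nm$ (divide by $n \ge 1$), and the right-hand side above is at least $2nm$ because $(m-n)(m+n+1) \ge 0$; chaining these gives the desired strict inequality. If $n = m$, the right-hand side collapses to $2n^2$, which equals $nm + n^2$, so the crude bound is exactly tight and must be sharpened: using the trivial fact that a graph on $m$ vertices has maximum degree at most $m-1$, we get $\Delta_H \le m - 1 = n - 1$, whence the left side of the displayed inequality is at most $(n-1)\cdot 2n = 2n^2 - 2n < 2n^2$. In either case inequality~(\ref{equation0}) holds, and Theorem~\ref{jointhm1} yields $\chi_{ld}(G+H) \le \chi_{ld}(G) + \chi_{ld}(H)$.

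The only step that needs any care --- the main ``obstacle'', such as it is --- is the boundary case $n = m$, where the estimate coming directly from $\Delta_H \le n$ is tight against the right-hand side of~(\ref{equation0}) and one must fall back on the bound $\Delta_H \le m-1$ to recover strictness; for $n < m$ there is genuine slack and the chain of inequalities is entirely routine.
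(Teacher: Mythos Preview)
Your proof is correct and follows exactly the route the paper intends: the corollary is stated without proof immediately after Theorem~\ref{jointhm1}, so the implicit argument is precisely to verify that the hypotheses $n\le m$ and $\Delta_H\le n$ force inequality~(\ref{equation0}), which you carry out carefully (including the boundary case $n=m$ where one must use $\Delta_H\le m-1$).
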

	
	\begin{cor}\label{joinwithempty1}
		If $n\leq m$ and $G$ is a graph of order $n$, then $\chi_{ld}(G+\overline{K_{m}})\leq \chi_{ld}(G)+1$.
	\end{cor}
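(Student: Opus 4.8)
\emph{Proof proposal.} The plan is to read the statement off as the special case $H=\overline{K_m}$ of Corollary~\ref{joincor}. Indeed, $\overline{K_m}$ is a graph of order $m$ with maximum degree $\Delta_H=0$, so the two hypotheses $n\le m$ and $\Delta_H\le n$ of that corollary hold automatically; hence $\chi_{ld}(G+\overline{K_m})\le\chi_{ld}(G)+\chi_{ld}(\overline{K_m})$. Since $\overline{K_m}$ has no edges, the adjacency condition defining a local distance antimagic labeling is vacuous and every bijection assigns the common weight $0$ to all of its vertices, so $\chi_{ld}(\overline{K_m})=1$ and the bound $\chi_{ld}(G+\overline{K_m})\le\chi_{ld}(G)+1$ follows.

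To make the argument self-contained I would instead unwind the construction from the proof of Theorem~\ref{jointhm1}. Fix a local distance antimagic labeling $f$ of $G$ realising $\chi_{ld}(G)$ colors, keep the labels $1,\dots,n$ on $V(G)=\{v_1,\dots,v_n\}$, and place the labels $n+1,\dots,n+m$ in any order on the $m$ vertices of $\overline{K_m}$. In $G+\overline{K_m}$ every vertex of the $\overline{K_m}$-part has open neighborhood exactly $V(G)$, so all of them get the single weight $1+2+\cdots+n=\tfrac{n(n+1)}{2}$, contributing one color; meanwhile a vertex $v_i$ of $G$ gets weight $w_G(v_i)+\tfrac{m(m+1)}{2}+nm$, which is an additive shift of $w_G(v_i)$ by a constant, so adjacent $G$-vertices still receive distinct colors and $V(G)$ still carries exactly $\chi_{ld}(G)$ colors.

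The only thing left to verify is that the color $\tfrac{n(n+1)}{2}$ used on $\overline{K_m}$ does not coincide with any color on $V(G)$, i.e.\ that $\tfrac{n(n+1)}{2}$ is strictly below the minimum $G$-weight $\tfrac{m(m+1)}{2}+nm$ (using $w_G(v_i)\ge 0$). This is where the hypothesis $n\le m$ (with $n,m\ge 1$) enters, and it is the one non-vacuous point; it is immediate since $\tfrac{m(m+1)}{2}+nm-\tfrac{n(n+1)}{2}=nm+\tfrac{(m-n)(m+n+1)}{2}>0$. Equivalently, this is the instance $\Delta_H=0$ of inequality~(\ref{equation0}), whose left side is then $-\tfrac{\delta_G(\delta_G+1)}{2}\le 0$ and whose right side is positive, so Theorem~\ref{jointhm1} applies directly. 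I anticipate no real obstacle here: the corollary is essentially bookkeeping once the degenerate factor $\overline{K_m}$ is dealt with.
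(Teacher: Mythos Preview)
Your proof is correct and matches the paper's intended derivation: the corollary is stated without proof as an immediate specialization of Theorem~\ref{jointhm1} (equivalently Corollary~\ref{joincor}) with $H=\overline{K_m}$, $\Delta_H=0$, and the single new color coming from the common weight $\tfrac{n(n+1)}{2}$ on the $\overline{K_m}$-side. Your self-contained unwinding is exactly the construction in the proof of Theorem~\ref{jointhm1}, and your verification that inequality~(\ref{equation0}) holds trivially when $\Delta_H=0$ is the right way to sidestep any quibble about whether $\chi_{ld}(\overline{K_m})$ is formally defined.
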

	\begin{cor}\label{joinwithempty2}
		If $m<n$ and $G$ is a graph of order $n$ having maximum degree $\Delta$ and if $$\Delta(m+n)-\frac{\Delta(\Delta-1)}{2}<2nm+\frac{(n-m)(n+m+1)}{2},$$ then $\chi_{ld}(G+\overline{K_{m}})\leq \chi_{ld}(G)+1$.
	\end{cor}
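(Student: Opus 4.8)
The plan is to deduce the statement directly from Theorem~\ref{jointhm1}, by feeding the two factors into that theorem in the order \emph{opposite} to the one used for Corollary~\ref{joinwithempty1}. In Corollary~\ref{joinwithempty1} the edgeless graph $\overline{K_m}$ is the larger factor, while here $m<n$, so $\overline{K_m}$ is the smaller factor and must now play the role of the ``graph of order $n$'' in Theorem~\ref{jointhm1}, with $G$ playing the role of the ``graph of order $m$''.

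Concretely, I would invoke Theorem~\ref{jointhm1} with its order-$n$ graph taken to be $\overline{K_m}$ and its order-$m$ graph taken to be $G$; the size hypothesis of the theorem (the first factor being no larger than the second) is precisely the assumption $m<n$. The minimum degree of the smaller factor is $\delta_{\overline{K_m}}=0$ and the maximum degree of the larger factor is $\Delta_G=\Delta$, so, after interchanging the symbols $m$ and $n$, the defining inequality~\eqref{equation0} of Theorem~\ref{jointhm1} becomes
\[
\Delta(m+n)-\frac{\Delta(\Delta-1)}{2}-\frac{0\cdot(0+1)}{2}<2nm+\frac{(n-m)(n+m+1)}{2},
\]
which is exactly the inequality hypothesized in the corollary. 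Theorem~\ref{jointhm1} then yields $\chi_{ld}(\overline{K_m}+G)\le\chi_{ld}(\overline{K_m})+\chi_{ld}(G)$, and since the join operation is commutative and $\chi_{ld}(\overline{K_m})=1$ (any bijection on an edgeless graph induces the single weight $0$, in line with the convention already used for $O_{k+1}$ just before Corollary~\ref{joinwithempty1}), this rearranges to $\chi_{ld}(G+\overline{K_m})\le\chi_{ld}(G)+1$, as required.

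I do not expect any genuine obstacle here, since all of the substantive work has already been done in Theorem~\ref{jointhm1}; the only points that need attention are the bookkeeping of the substitution — keeping track of which graph replaces which symbol and carrying out the matching interchange of $m$ and $n$ inside~\eqref{equation0} — together with the use of the convention $\chi_{ld}(\overline{K_m})=1$. If one prefers to avoid citing the theorem, one can instead re-run its labeling construction in the present roles, putting the labels $\{1,\dots,m\}$ on $\overline{K_m}$ and the labels $\{m+1,\dots,m+n\}$ on $G$; the weight of every vertex of $\overline{K_m}$ in $G+\overline{K_m}$ is then the constant $nm+\tfrac{n(n+1)}{2}$, the weight of each $v\in V(G)$ is $w_G(v)+m\deg_G(v)+\tfrac{m(m+1)}{2}$, and the final estimate is identical to the one in the proof of Theorem~\ref{jointhm1} and reduces, after the substitutions above, to the displayed inequality, which forces the former weight to exceed every one of the latter and hence yields a valid local distance antimagic coloring with $\chi_{ld}(G)+1$ colors.
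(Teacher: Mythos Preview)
Your proposal is correct and follows exactly the intended route: the paper states this as a corollary of Theorem~\ref{jointhm1} without further proof, and your application of that theorem with $\overline{K_m}$ in the role of the smaller factor and $G$ in the role of the larger factor (so that $\delta=0$, $\Delta_H=\Delta$, and the variables $m,n$ swap) is precisely the substitution that makes the hypothesis~\eqref{equation0} collapse to the displayed inequality. The use of $\chi_{ld}(\overline{K_m})=1$ is also consistent with the paper's convention noted just before this block of corollaries.
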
\bigskip
 
Theorem \ref{jointhm1}, gives a necessary condition of a local distance antimagic labeling of the join of two graphs $G$ and $H$ such that $\chi_{ld}(G+H)\leq\chi_{ld}(G)+\chi_{ld}(H)$. This bound is sharp. Equality holds for the graphs $G$ and $H$ satisfying  $\chi_{ld}(G)=\chi(G)$ and $\chi_{ld}(H)=\chi(H)$ respectively, i.e., $\chi_{ld}(G+H) \leq \chi_{ld}(G)+\chi_{ld}(H) \leq \chi(G)+\chi(H) = \chi(G+H)$. We know that $\chi_{ld}(G+H) \geq \chi(G+H)$, hence $\chi_{ld}(G+H) = \chi(G+H) = \chi(G) + \chi(H) = \chi_{ld} (G) + \chi_{ld}(H)$.\\

Next we present few more graphs $G$ and $H$, for which $\chi_{ld}(G+H)=\chi_{ld}(G)+\chi_{ld}(H)$.\\

	\begin{theorem}
		For positive integers $n$ and $m$, $\chi_{ld}(F_n+\overline{K_{m}})=2n+2$.
  \end{theorem}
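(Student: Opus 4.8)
The plan is to establish the two inequalities $\chi_{ld}(F_n+\overline{K_m})\ge 2n+2$ and $\chi_{ld}(F_n+\overline{K_m})\le 2n+2$ separately; the lower bound is the substantive part, while the upper bound will drop out of the corollaries already proved. For the lower bound, write $V(F_n)=\{c\}\cup\{u_1,v_1,\dots,u_n,v_n\}$ with $c$ the central vertex and $u_iv_i$ the $i$-th triangle edge, and let $y_1,\dots,y_m$ be the vertices of $\overline{K_m}$. In $G:=F_n+\overline{K_m}$ the open neighbourhoods are $N(c)=\{u_i,v_i:1\le i\le n\}\cup\{y_j:1\le j\le m\}$, $N(u_i)=\{c,v_i\}\cup\{y_j:1\le j\le m\}$ (symmetrically for $v_i$), and $N(y_j)=V(F_n)$. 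The key observation is that for any two distinct \emph{outer} vertices $a,b\in\{u_i,v_i:1\le i\le n\}$ one has $|N(a)\triangle N(b)|=2$: the two neighbourhoods share $\{c\}\cup\{y_j:1\le j\le m\}$ and differ only in the unique $F_n$-neighbour of $a$ other than $c$ versus that of $b$, and these two vertices are always distinct (whether or not $a$ and $b$ are partners). Hence Proposition~\ref{handaprop} forces $w(a)\neq w(b)$, so every local distance antimagic labelling of $G$ assigns $2n$ pairwise distinct weights to the outer vertices. Moreover $c$ is adjacent to every outer vertex and to every $y_j$, so $w(c)$ differs from all $2n$ outer weights; and every $y_j$ is adjacent to $c$ and to every outer vertex, so $w(y_j)$ differs from $w(c)$ and from all $2n$ outer weights. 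This yields at least $2n+2$ distinct weights, whence $\chi_{ld}(G)\ge 2n+2$.

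For the upper bound I would use $\chi_{ld}(F_n)=2n+1$ (Theorem~\ref{friendship}; for $n=1$ this is $\chi_{ld}(K_3)=3$, or one may instead note $F_1+\overline{K_m}=K_{1,1,1,m}$ and invoke Theorem~\ref{multipartite}) and split on $m$. If $m\ge 2n+1$, then $F_n$ has order $2n+1\le m$, so Corollary~\ref{joinwithempty1} gives $\chi_{ld}(F_n+\overline{K_m})\le\chi_{ld}(F_n)+1=2n+2$. If $m\le 2n$, apply Corollary~\ref{joinwithempty2} to $G=F_n$, which has order $2n+1$ and maximum degree $\Delta=2n$; its hypothesis
\[
\Delta(m+2n+1)-\frac{\Delta(\Delta-1)}{2}<2(2n+1)m+\frac{(2n+1-m)(2n+m+2)}{2}
\]
simplifies, after expansion, to $m^2-(4n+3)m-2<0$, which holds for every integer $m\le 4n+3$ and in particular for $m\le 2n$. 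Thus $\chi_{ld}(F_n+\overline{K_m})\le 2n+2$ in this case as well.

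Combining the two bounds gives $\chi_{ld}(F_n+\overline{K_m})=2n+2$. The main obstacle is the lower bound, and specifically verifying the symmetric-difference count $|N(a)\triangle N(b)|=2$ uniformly over all pairs of outer vertices so that Proposition~\ref{handaprop} can be applied; once that is in place, the adjacency arguments for $c$ and the $y_j$'s are immediate, and the upper bound is routine bookkeeping with the join corollaries (together with the algebraic simplification of the Corollary~\ref{joinwithempty2} inequality, which should be carried out carefully).
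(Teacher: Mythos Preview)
Your proof is correct and follows essentially the same approach as the paper: the upper bound via Corollaries~\ref{joinwithempty1} and~\ref{joinwithempty2} together with Theorem~\ref{friendship}, and the lower bound by showing that any labeling forces $2n$ distinct weights on the outer vertices, a further distinct weight on $c$, and yet another on the $y_j$'s. The only difference is cosmetic: for the distinctness of outer-vertex weights you invoke Proposition~\ref{handaprop} via the symmetric-difference count, whereas the paper argues directly from the explicit formula $w(u_i)=g(v_i)+g(c)+\sum_j g(x_j)$ (so $w(u_i)=w(u_j)$ would force $g(v_i)=g(v_j)$, etc.); these are two phrasings of the same observation. Your treatment of the upper bound is in fact more thorough than the paper's, which simply cites the corollaries without verifying the inequality of Corollary~\ref{joinwithempty2}.
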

  \begin{proof} 
  Let $V(F_n)=\{c, u_i, v_i:\ 1\leq i \leq n\}$ be the vertex set of $F_n$ and $V(\overline{K_{m}})=\{x_j:\ 1\leq j \leq m\}$ be the vertex set of $\overline{K_{m}}$ . The upper bound follows from Corollary \ref{joinwithempty1}, Corollary \ref{joinwithempty2}, and Theorem \ref{friendship}. We now turn to prove the lower bound.
  
      Consider any local distance antimagic labeling $g$ of $F_n+\overline{K_{m}}$. Since the vertices $c, u_i, v_i,\; 1\leq i \leq n$  form a clique, the weight of vertices $c, u_i, v_i,\; 1\leq i \leq n$ are distinct.  Now as $w(u_i)=g(v_i)+g(c)+\sum_{p=1}^{m}g(x_p)$, we have $w(u_i)\neq w(u_j)$, for $i\neq j$. Similarly for any $i\neq j$, we have $w(v_i)\neq w(v_j)$. Next, consider the weight of $u_i$ and $v_j$ for $i \neq j$. Suppose $w(u_i) = w(v_j)$ for any $i \neq j$ then we get $g(u_i)=g(v_j)$, which is a contradiction, therefore $w(u_i) \neq  w(v_j)$ for any $i \neq j$. Therefore the vertices of $F_n$ receive $2n+1$ distinct weights under $g$. Also, as all vertices of $F_n$ are adjacent to all vertices of $\overline{K_{m}}$, the weight of the vertices of $\overline{K_{m}}$ is distinct from these $2n+1$ weights. Therefore,  $\chi_{ld}(F_n+\overline{K_{m}})\geq 2n+2$. Hence $\chi_{ld}(F_n+\overline{K_{m}})= 2n+2$.
  \end{proof}
	
 \begin{theorem}
     For any positive integer $n$, $\chi_{ld}(F_n+B_{n,n})=2n+5$.
     \end{theorem}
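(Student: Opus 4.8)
The plan is to pin $\chi_{ld}(F_n+B_{n,n})$ between $2n+5$ from above and $2n+5$ from below, so that the two bounds meet. For the upper bound, note that $F_n$ has order $2n+1$, $B_{n,n}$ has order $2n+2$, and $\Delta_{B_{n,n}}=n+1$, attained at the two adjacent central vertices of the bistar. Since $2n+1\le 2n+2$ and $n+1\le 2n+1$, Corollary~\ref{joincor} applies with $G=F_n$ and $H=B_{n,n}$ and yields $\chi_{ld}(F_n+B_{n,n})\le \chi_{ld}(F_n)+\chi_{ld}(B_{n,n})=(2n+1)+4=2n+5$ by Theorem~\ref{friendship} and Theorem~\ref{bistar}. (For $n=1$ one uses instead $B_{1,1}\cong P_4$ with $\chi_{ld}(P_4)=4$ and $F_1\cong K_3$ with $\chi_{ld}(K_3)=3$.)

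For the lower bound I would fix an arbitrary local distance antimagic labeling $g$ of $F_n+B_{n,n}$ and write $V(F_n)=\{c,u_i,v_i:1\le i\le n\}$, with $c$ the centre and $c,u_i,v_i$ the $i$-th triangle, and $V(B_{n,n})=\{a,b\}\cup\{p_i:1\le i\le n\}\cup\{q_i:1\le i\le n\}$, where $a\sim b$, each $p_i\sim a$ and each $q_i\sim b$. Set $P=\sum_{z\in V(F_n)}g(z)$ and $T=\sum_{z\in V(B_{n,n})}g(z)$. Because every vertex of $F_n$ is adjacent in the join to every vertex of $B_{n,n}$, the weights on the $F_n$-side all carry the constant $T$: precisely, $w(c)=\sum_i(g(u_i)+g(v_i))+T$, $w(u_i)=g(c)+g(v_i)+T$ and $w(v_i)=g(c)+g(u_i)+T$. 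Exactly as in the proof of the previous theorem, the adjacencies $c\sim u_i$, $c\sim v_i$, $u_i\sim v_i$ together with injectivity of $g$ (which gives $w(u_i)-w(u_j)=g(v_i)-g(v_j)\ne0$, $w(v_i)-w(v_j)=g(u_i)-g(u_j)\ne0$, and $w(u_i)-w(v_j)=g(v_i)-g(u_j)\ne0$ for $i\ne j$) force the $2n+1$ vertices of $F_n$ to receive pairwise distinct weights.

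The heart of the argument is the $B_{n,n}$-side. Since $a$ is the only neighbour of $p_i$ inside $B_{n,n}$, the join gives $w(p_i)=g(a)+P=:\alpha$ for every $i$, and likewise $w(q_i)=g(b)+P=:\beta$ for every $i$, while $w(a)=g(b)+\sum_i g(p_i)+P$ and $w(b)=g(a)+\sum_i g(q_i)+P$. I would then verify that $\alpha,\beta,w(a),w(b)$ are four pairwise distinct values: $\alpha\ne\beta$ and $w(a)\ne w(b)$ since $g$ is injective and $a\sim b$; $w(a)\ne\alpha$ and $w(b)\ne\beta$ since $a\sim p_i$ and $b\sim q_i$; and $w(a)\ne\beta$, $w(b)\ne\alpha$ since the pendant sums $\sum_i g(p_i)$ and $\sum_i g(q_i)$ are strictly positive. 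Moreover, every vertex of $B_{n,n}$ is adjacent to every vertex of $F_n$, so each of $\alpha,\beta,w(a),w(b)$ is also distinct from all $2n+1$ weights on $F_n$. Hence $g$ uses at least $(2n+1)+4=2n+5$ colours, so $\chi_{ld}(F_n+B_{n,n})\ge 2n+5$, and together with the upper bound this gives the claimed equality.

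The only point requiring genuine care is the count of exactly four new colours on the $B_{n,n}$-side: two of the six needed inequalities ($w(a)\ne\alpha$, $w(b)\ne\beta$) come for free from adjacency inside $B_{n,n}$, but the other four ($\alpha\ne\beta$, $w(a)\ne w(b)$, $w(a)\ne\beta$, $w(b)\ne\alpha$) are not adjacency-driven and must be argued from positivity of the labels and from $g$ being a bijection. The remaining parts—the upper bound and the distinctness of the $F_n$-weights—are routine transcriptions of the corollary and of the preceding $F_n+\overline{K_m}$ argument.
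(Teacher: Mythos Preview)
Your proposal is correct and follows essentially the same approach as the paper: the upper bound via Corollary~\ref{joincor} together with Theorems~\ref{friendship} and~\ref{bistar}, and the lower bound by fixing an arbitrary labeling, showing the $2n+1$ weights on $F_n$ are pairwise distinct via injectivity of $g$, showing the four weights $\alpha,\beta,w(a),w(b)$ on $B_{n,n}$ are pairwise distinct (using adjacency for two of the inequalities and positivity/injectivity for the others), and then separating the two blocks by the join adjacencies. You are in fact slightly more careful than the paper in flagging the $n=1$ boundary case where Theorems~\ref{friendship} and~\ref{bistar} do not literally apply.
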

     \begin{proof}
         Let $V(F_n)=\{c\}\cup \{u_i,v_i\ :\ 1\leq i\leq n\}$ be the vertex set of $F_n$ and $V(B_{n,n})=\{a,b\}\cup \{x_{i},y_i\ : 1\leq i \leq n\}$ be the vertex set of $B_{n,n}$. The upper bound follows from Corollary \ref{joincor}, Theorem \ref{friendship}, and Theorem \ref{bistar}. Our next aim is to prove that the lower bound for $\chi_{ld}(F_n+B_{n,n})$ is $2n+5$.\\         
         
         Consider any local distance antimagic labeling $g$ of $H=F_n+B_{n,n}$. Let $S=\sum_{v\in F_n}g(v)$ and $T=\sum_{v\in B_{n,n}}g(v)$. Consider the subgraph $B_{n,n}$ of $H$. Note that as $w(x_i)=g(a)+S$ and $w(y_i)=g(b)+S$, for $1\leq i \leq n$, all the $x_i's$ have the same weight, which is distinct from the weight of all the $y_i's$ as the labeling $g$ is a bijection. Note that, $w(a)=\sum_{i=1}^{n}g(x_i)+g(b)+S$ and $w(b)=\sum_{i=1}^{n}g(y_i)+g(a)+S$. For $1\leq i \leq n$ as the vertex $a$ and the vertices $x_i$ are adjacent $w(a)\not=w(x_i)$. Also $w(a)\not=w(y_i)$ as that would imply $\sum_{i=1}^{n}g(x_i)=0$. Similarly $w(b)$ is distinct from $w(x_i)$ and $w(y_i)$ for $1\leq i \leq n$. Also, as the vertices $a$ and $b$ are adjacent,  $w(a)\not =w(b)$. Therefore the vertices of subgraph $B_{n,n} $ receive 4 distinct weights under $f$. Now consider the subgraph $F_n$ in $H$. Since the vertices $c, u_i, v_i,\; 1\leq i \leq n$  form a clique, the weight of vertices $c, u_i, v_i,\; 1\leq i \leq n$ are distinct.  Now as $w(u_i)=g(v_i)+g(c)+T$, we have $w(u_i)\neq w(u_j)$, for $i\neq j$. Similarly for any $i\neq j$, we have $w(v_i)\neq w(v_j)$. Next, consider the weight of $u_i$ and $v_j$ for $i \neq j$. Suppose $w(u_i) = w(v_j)$ for any $i \neq j$ then we get $g(u_i)=g(v_j)$, which is a contradiction, therefore $w(u_i) \neq  w(v_j)$ for any $i \neq j$. Therefore the vertices of $F_n$ receive $2n+1$ distinct weights under $g$. Also, these $2n+1$ weights received by vertices of $F_n$ are distinct from the 4 weights received by vertices of $B_{n,n}$ under $g$, as all the vertices of $F_n$ are adjacent to all the vertices of $B_{n,n}$ in $H$ and $g$ is local distance antimagic labeling. So in total, the vertices of graph $H$ receive $2n+5$ distinct weights under $g$. As $g$ is arbitrary local distance antimagic labeling of $H$, we have $\chi_{ld}(H)\geq 2n+5$, and the equality follows.
     \end{proof}
	
	\subsection{$\chi_{ld}$  of Lexicographic Product of Graphs with Complement of Complete Graph}
		First, we define the lexicographic product of graphs. The lexicographic product of $G$ and $H$ is defined as a graph having vertex set $V(G)\times V(H)$, in which two vertices $(g,h)$ and $(g^{\prime},h^{\prime})$ are adjacent either if $g$ is adjacent to $g^{\prime}$ in $G$ or $g=g^{\prime}$ and $h$ is adjacent to $h^{\prime}$ in $H$.
	The lexicographic product of graphs, also called the composition of graphs, is denoted by $G[H]$. Next, we present a simple technique to construct the graph $G[H]$ from the graphs $G$ and $H$. Let $V(G)=\{x_{1},x_{2},\hdots, x_{n}\}$.  To construct $G[H]$,  corresponding to each vertex $x_{i}$ of $G$, we take a copy $H_{i}$ of $H$ for $i=1,2,\dots,n$, and if the vertices $x_{i}$ and $x_{j}$ are adjacent, then we join each vertex of $H_{i}$ to all the vertices of $H_{j}$.   Geller et al. \cite{lexichromatic}, proved that for a bipartite graph $G$ and for any graph $H$, $\chi(G[H])=2\ \chi(H)$. In this section, we study $\chi_{ld}(G[\overline{K_{n}}]) $ for various graph classes $G$ and obtain almost similar results as for $\chi(G[\overline{K_{n}}])$. \bigskip \\
 We begin by showing the upper bound for $\chi_{ld}(G[\overline{K_{n}}])$ under some conditions is $ \chi_{ld}(G)$.
 \begin{theorem}
    Let $n>1$ and $f$ be a local distance antimagic labeling of $G$ that induces $\chi_{ld}(G)$ distinct weights. Suppose for two adjacent vertices $u$ and $v$ we have, 
    \begin{equation}\label{G[O_n] eq 1}
        \frac{n(n+1)}{2}\deg(v)+(w_f(v)-deg(v))n^2 \not=\frac{n(n+1)}{2}\deg(u)+(w_f(u)-\deg(u))n^2, 
    \end{equation}
    then $\chi_{ld}(G[\overline{K_{n}}])\leq \chi_{ld}(G).$
\end{theorem}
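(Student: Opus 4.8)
The plan is to lift the labeling $f$ from $G$ to $G[\overline{K_n}]$ by ``blowing up'' each label $f(x_i)$ into a block of $n$ consecutive integers. Write $V(G)=\{x_1,\dots,x_m\}$ and $a_i:=f(x_i)$, so that $\{a_1,\dots,a_m\}=\{1,\dots,m\}$, and realise $G[\overline{K_n}]$ as $m$ disjoint copies $H_1,\dots,H_m$ of $\overline{K_n}$, with $H_i$ sitting over $x_i$ and every vertex of $H_i$ joined to every vertex of $H_j$ whenever $x_i$ and $x_j$ are adjacent in $G$. Define a labeling $h$ of $G[\overline{K_n}]$ by giving the $n$ vertices of $H_i$, in any order, the $n$ integers $(a_i-1)n+1,(a_i-1)n+2,\dots,a_i n$. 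As $i$ runs over $1,\dots,m$ these blocks partition $\{1,2,\dots,mn\}$, so $h$ is a bijection onto $\{1,\dots,mn\}$.

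The first real step is to compute the weights induced by $h$. Since $\overline{K_n}$ is edgeless, all $n$ vertices of $H_i$ have the same neighbourhood in $G[\overline{K_n}]$, namely $\bigcup_{x_j\in N_G(x_i)}V(H_j)$; as the labels inside $H_j$ sum to $a_j n^2-\frac{n(n-1)}{2}$, every vertex $v\in V(H_i)$ has
\[
w_h(v)=\sum_{x_j\in N_G(x_i)}\Big(a_j n^2-\frac{n(n-1)}{2}\Big)=n^2\,w_f(x_i)-\frac{n(n-1)}{2}\deg(x_i)=\frac{n(n+1)}{2}\deg(x_i)+\big(w_f(x_i)-\deg(x_i)\big)n^2,
\]
which is exactly the expression appearing in \eqref{G[O_n] eq 1}. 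Hence $h$ colours each copy $H_i$ monochromatically by the colour $c_i:=\frac{n(n+1)}{2}\deg(x_i)+(w_f(x_i)-\deg(x_i))n^2$. Because there are no edges inside a copy, two adjacent vertices of $G[\overline{K_n}]$ must lie in distinct copies $H_i,H_j$ with $x_i$ adjacent to $x_j$ in $G$, and for such a pair the hypothesis \eqref{G[O_n] eq 1} gives $c_i\neq c_j$; so $h$ is a local distance antimagic labeling of $G[\overline{K_n}]$.

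It remains to bound the number of distinct colours $|\{c_i:1\le i\le m\}|$ by $\chi_{ld}(G)$, and this is the step I expect to be the main obstacle. Since $c_i=n^2 w_f(x_i)-\frac{n(n-1)}{2}\deg(x_i)$ and $f$ realises exactly $\chi_{ld}(G)$ distinct $f$-weights, it is enough to show $c_i=c_j$ whenever $w_f(x_i)=w_f(x_j)$. For $n>1$ the coefficient $\frac{n(n-1)}{2}$ is nonzero, so among vertices of a common $f$-weight the value of $c_i$ is governed entirely by $\deg(x_i)$: the degree term can \emph{split} a single $f$-colour class into several colours of $G[\overline{K_n}]$, though it can never merge two classes. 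The decisive point is therefore to exclude such splitting --- for example by working with an optimal $f$ whose colour classes are degree-homogeneous, which is automatic when $G$ is regular and holds in the cases taken up in the applications below; then $w_f(x_i)\mapsto c_i$ is well defined and $|\{c_i\}|\le|\{w_f(x_i)\}|=\chi_{ld}(G)$, yielding $\chi_{ld}(G[\overline{K_n}])\le\chi_{ld}(G)$. The remaining ingredients --- bijectivity of $h$, the block sum $a_j n^2-\frac{n(n-1)}{2}$, the rearrangement into the form of \eqref{G[O_n] eq 1}, and the adjacency check --- are routine.
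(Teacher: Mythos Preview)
Your construction and computation are exactly the paper's: it defines $g(v_i^j)=i+(f(v_j)-1)n$, which is precisely your block $\{(a_i-1)n+1,\dots,a_in\}$, obtains the identical weight formula $w_g(v_i^j)=\frac{n(n+1)}{2}\deg(v_j)+(w_f(v_j)-\deg(v_j))n^2$, and then invokes the hypothesis \eqref{G[O_n] eq 1} to conclude that $g$ is local distance antimagic.

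Where you differ is in rigour, not route. The paper, having established that $g$ is a valid labeling, simply writes ``and moreover $\chi_{ld}(G[\overline{K_{n}}])\leq \chi_{ld}(G)$'' with no further justification --- it never argues that the number of distinct $g$-weights is at most $\chi_{ld}(G)$. The obstacle you singled out (two non-adjacent vertices sharing an $f$-weight but having different degrees, hence receiving distinct $g$-weights and splitting a colour class) is a genuine gap, and the paper's proof does not close it. Your instinct that an additional hypothesis such as regularity or degree-homogeneous colour classes is needed for the stated bound is well-founded; the paper's subsequent applications are to regular graphs or are handled by separate ad hoc constructions, so the issue never bites there, but as written the general theorem's last step is unjustified in the source as well.
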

\begin{proof}
    Let $V(G)=\{v_1,v_2,\dots, v_m\}$ and $V(\overline{K_{n}})=\{x_1,x_2,\dots,x_n\}$ be the vertex sets of $G$ and $\overline{K_{n}}$ respectively. For $1\leq j \leq m$ and $1\leq i \leq n$, let $v_i^j$ be the vertices of $G[\overline{K_{n}}]$ that corresponds with vertices $v_j$ of $G$. Using the labeling $f$, we define a vertex labeling $g$ of $G[\overline{K_{n}}]$ by
    $$g(v_i^j)=i+(f(v_j)-1)n.$$
    For the weight of the vertex $v_i^j$; $j=1,2,\dots,m$ and $i=1,2,\dots,n$ we obtain,
    \begin{align*}
        w_g(v_i^j)&=\sum_{v_p\in N_G(v_j)}\sum_{i=1}^{n}g(v_i^p)\\
        &=\frac{n(n+1)}{2}\deg(v_j)+(w(v_j)-\deg(v_j))n^2.
    \end{align*}
    As Equation \ref{G[O_n] eq 1} hold, the adjacent vertices of $G[\overline{K_{n}}]$ have distinct weights and hence $g$ is a local distance antimagic labeling of $G[\overline{K_{n}}]$ and moreover $\chi_{ld}(G[\overline{K_{n}}])\leq \chi_{ld}(G).$ 
\end{proof}
	Next, we present constructions of specific matrices, which we shall use repeatedly in our proofs in this section.
	We first construct a $n\times m$ matrix $A$ ($n$ even) as follows:\\
	For $i=1,2,\dots,n$, define,
	\begin{equation}\label{matrixA}
		A=(a_{i,j})=\begin{cases}
			(i-1)m+j &\text{for $i\equiv1\ (\bmod\ 2)$ and  $1\leq j\leq m$, }\\
			im+1-j &\text{for $i\equiv0\ (\bmod\  2)$ and  $1\leq j \leq m$. }
		\end{cases}
	\end{equation}
	Note that the sum of entries in any column of $A$ is equal to $\frac{n(nm+1)}{2}$. \bigskip\\
	We now present the construction of two $n\times m$ matrices $B$ and $C$  ($n $ odd) using the entries $1,2,3,\hdots,2mn$, such that the sum of entries in any column of $B$ is a fixed constant while the sum of entries in any column of $C$ is another constant.
	\begin{equation}\label{matrixB}
		B=(b_{i,j})=\begin{cases}
			j &\text{for $i=1$ and $1\leq j\leq m$,}\\
			j+m &\text{for $i=2$ and $1\leq j\leq m$,}\\
			4m-(2j-2) &\text{for $i=3$ and $1\leq j\leq m$,}\\
			6m+2j-1+2m(i-4) &\text{for $i\geq 4$, $i$ even and $1\leq j\leq m$,}\\
			2im-(2j-1) &\text{for $i\geq 5$, $i$ odd and $1\leq j\leq m$.}\\
		\end{cases}
	\end{equation}
	\begin{equation}\label{matrixC}
		C=(c_{i,j})=\begin{cases}
			4m+1-2j &\text{for $i=1$ and $1\leq j\leq m$,}\\
			4m+j &\text{for $i=2$ and $1\leq j\leq m$,}\\
			5m+j &\text{for $i=3$ and $1\leq j\leq m$,}\\
			6m+2j+2m(i-4) &\text{for $i\geq 4$, $i$ even and $1\leq j\leq m$,}\\
			2im-(2j-2) &\text{for $i\geq 5$, $i$ odd and $1\leq j\leq m$.}\\
		\end{cases}
	\end{equation}
	Note that the sum of entries in any column of $B$ is $mn^{2}-4m+2$ while the sum of the entries in any column of $C$ is $mn^{2}+4m+n-2$. 	Now if for some $n$ and $m$, we have \begin{align*}
		mn^{2}+4m+n-2&=mn^{2}-4m+2\\
		8m+n&=4\\
		n&=4-8m.
	\end{align*}
	This implies that $n$ is even, which is a contradiction. Hence, the column sums are always distinct.\\\\ Using the constructions discussed above, we prove results about the lexicographic product of some graphs with the complement of the complete graph.
In the following theorem, a $r$-regular bipartite graph of order $m$ is a graph $G$ with bi-partition \(\{A, B\}\), where \(A = \{u_1, u_2, \dots, u_s\}\) and \(B = \{v_{1},v_{2} \dots, v_{s}\}\), where $2s=m$. 
	\begin{theorem}\label{bipartitelexi1}
		For integers $n>1$, $m>1$ and a $r$-regular bipartite graph $G$ of order $m$ , we have  \(\ld(G[\overline{K_{n}}]) = 2\).
	\end{theorem}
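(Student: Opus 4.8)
The plan is to get the lower bound essentially for free and then to exhibit an explicit two-colour local distance antimagic labeling. Write $s=m/2$, let $X_i$ denote the vertex set of the copy of $\overline{K_{n}}$ attached to $u_i$ and $Y_i$ that of the copy attached to $v_i$. Since $\overline{K_{n}}$ has no edges, $G[\overline{K_{n}}]$ is bipartite with parts $\bigcup_i X_i$ and $\bigcup_i Y_i$; as $G$ is $r$-regular with $r\ge 1$ (a $0$-regular $G$ is a set of isolated vertices, excluded by the standing hypothesis of the paper), $G[\overline{K_{n}}]$ has an edge, so $\ld(G[\overline{K_{n}}])\ge\chi(G[\overline{K_{n}}])=2$. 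The structural observation driving the rest is that the neighbourhood of any vertex $x\in X_i$ is precisely $\bigcup_{j:\, u_iv_j\in E(G)}Y_j$, so $w(x)$ equals the sum of all labels lying on those $r$ copies and hence is the same for every $x\in X_i$; symmetrically, every vertex of $Y_j$ has one and the same weight.

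Hence I would look for a bijection $g\colon V(G[\overline{K_{n}}])\to\{1,\dots,mn\}$ under which all the sets $X_i$ receive a common label-sum $\alpha$ and all the sets $Y_j$ receive a common label-sum $\beta\ne\alpha$. For such a $g$, every vertex lying in some $X_i$ gets weight $r\beta$ and every vertex lying in some $Y_j$ gets weight $r\alpha$; these two values are distinct because $r\ge 1$ and $\alpha\ne\beta$, adjacent vertices always lie on opposite sides of the bipartition, so $g$ is a local distance antimagic labeling inducing exactly two colours and therefore $\ld(G[\overline{K_{n}}])\le 2$.

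To build $g$ I would specialise the matrices already constructed, replacing $m$ by $s$. If $n$ is even, use the $n\times s$ array $A$ of Equation~\ref{matrixA}: its entries are a permutation of $\{1,\dots,ns\}$ and every column sum equals $\tfrac{n(ns+1)}{2}$, which is an integer since $n$ is even. Place the $i$-th column on $X_i$, and place on $Y_j$ the entries of the $j$-th column each increased by $ns$; then every $Y_j$ has label-sum $\tfrac{n(ns+1)}{2}+n^2s\ne\alpha$, and between them the two sides use each of $1,\dots,mn$ exactly once. If $n$ is odd, use the $n\times s$ arrays $B$ and $C$ of Equations~\ref{matrixB} and \ref{matrixC}: they partition $\{1,\dots,2sn\}=\{1,\dots,mn\}$, every column of $B$ sums to $sn^2-4s+2$ and every column of $C$ sums to $sn^2+4s+n-2$, and these are distinct since their equality forces $n=4-8s<0$, exactly the computation already carried out in the text. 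Placing the columns of $B$ on the sets $X_i$ and the columns of $C$ on the sets $Y_j$ again yields $\alpha\ne\beta$.

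In either parity case $g$ thus induces precisely the two distinct weights $r\alpha$ and $r\beta$, so $\ld(G[\overline{K_{n}}])\le 2$, and together with the lower bound this gives $\ld(G[\overline{K_{n}}])=2$. I expect the only delicate part to be bookkeeping: checking that specialising the matrix constructions to $s$ columns preserves the asserted entry sets and column-sum formulas (in particular the integrality of $\tfrac{n(ns+1)}{2}$ when $n$ is even, and the correct total $\{1,\dots,mn\}$ in both cases), and noting that both induced colour classes are non-empty because $s\ge1$; none of this presents a genuine difficulty, the real content being the observation that the weight of a vertex of $G[\overline{K_{n}}]$ depends only on which copy it lies in.
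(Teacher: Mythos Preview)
Your proposal is correct and follows essentially the same approach as the paper: the same case split on the parity of $n$, the same use of the matrix $A$ (shifted by $ns$ on the second side) when $n$ is even and of the matrices $B,C$ when $n$ is odd, yielding the same column sums $\alpha,\beta$ and hence the two weights $r\alpha,r\beta$. The paper's proof is simply a more explicit write-up of exactly the labeling you describe.
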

	\begin{proof}
		For $1\leq j \leq s$ and $1\leq i \leq n$, let $u_{i}^j$ be the vertices of $G[\overline{K_{n}}]$ that correspond with vertices $u_{j}$ of $G$ and let $v_{i}^j$ be the vertices of $G[\overline{K_{n}}]$ that correspond with  vertices $v_{j}$ of $G$.
		We present the proof in two cases depending upon the parity of $n$.
  \begin{description}
      \item[Case 1:] $n$ is even. \par
      We construct a matrix $A=(a_{i,j})$ of size $n\times s$ using the definition of the Matrix \ref{matrixA}. We use this matrix to label $G[\overline{K_{n}}]$. Define $f\colon V(G[\overline{K_{n}}])\to \{1,2,\dots,mn\}$ by 
  \begin{align*}
      f(u_{i}^j)&=a_{i,j} \quad\text{for $1\leq j \leq s$ and $1 \leq i \leq n$,}\\
      f(v_{i}^j)&=a_{i,j}+ns \quad\text{for $1\leq j \leq s$ and $1 \leq i \leq n$.}
  \end{align*}

		For the weight of the vertices we have,
  \begin{align*}
      w(u_{i}^j)&=\sum_{v_j\in N(u_j)}\sum_{i=1}^nf(v_i^j)=\big(\tfrac{n(ns+1)}{2}+n^{2}s\big)r\quad \text{for $1\leq j \leq s$  and $1\leq i \leq n$,}\\
      w(v_{i}^j)&=\sum_{u_j\in N(v_j)}\sum_{i=1}^nf(u_i^j)=\big(\tfrac{n(ns+1)}{2}\big)r \quad \text{for $1\leq j \leq s$  and $1\leq i \leq n$.}
  \end{align*}
		Both these weights are distinct, and hence $f$ is a local distance antimagic labeling of $G[\overline{K_{n}}]$.
  \item[Case 2:] $n$ is odd.\par
   In this case, we construct two matrices $A=(a_{i,j})$ and $B=(b_{i,j})$ of size $n\times s$ using definition of Matrix \ref{matrixB} and Matrix \ref{matrixC}. We use these matrices to label $G[\overline{K_{n}}]$. Define $f\colon V(G[\overline{K_{n}}])\to \{1,2,\dots,nm\}$ by
   \begin{align*}
       f(u_{i}^j)&=a_{i,j} \quad \text{for $1\leq j\leq s$ and $1\leq i \leq n$},\\
       f(v_{i}^j)&=b_{i,j} \quad \text{for $1\leq j\leq s$ and $1\leq i \leq n$}.
   \end{align*}

		For the weight of the vertices we have,
  \begin{align*}
      w(u_{i}^j)&=\sum_{v_j\in N(u_j)}\sum_{i=1}^nf(v_i^j)=(sn^{2}+4s+n-2)r\quad \text{for $1\leq j \leq s$ and $1\leq i \leq n$,}\\
      w(v_{i}^j)&=\sum_{u_j\in N(v_j)}\sum_{i=1}^nf(u_i^j)=(sn^{2}-4s+2)r\quad \text{for $1\leq j \leq s$ and $1\leq i \leq n$. }
  \end{align*}
  \end{description}
  As seen above, both these weights are distinct, and hence $f$ is a local distance antimagic labeling of $G[\overline{K_{n}}]$. We see that, in either case, we get two distinct weights and therefore $\chi_{ld}(G[\overline{K_{n}}])\leq 2$. As $\chi(G[\overline{K_{n}}])=2$, we have, $\chi_{ld}(G[\overline{K_{n}}])= 2$. 
	\end{proof}

	\begin{cor}\label{cycleemptycomposition1}
		For integers \(n>1\) and \(m \ge 4\) even, \(\ld(C_m[\overline{K_{n}}]) = 2\).
	\end{cor}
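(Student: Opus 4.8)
The plan is to obtain this as an immediate specialization of Theorem \ref{bipartitelexi1}, by recognizing the even cycle as a regular bipartite graph with a balanced bipartition. First I would observe that for $m\ge 4$ even, the cycle $C_m$ is a connected $2$-regular graph on $m$ vertices; it is bipartite precisely because $m$ is even, and its bipartition $\{A,B\}$ is obtained by alternately placing consecutive vertices of the cycle into $A$ and $B$, so that $|A|=|B|=m/2$. Setting $s=m/2$ and $r=2$, the graph $C_m$ therefore satisfies exactly the hypotheses imposed in Theorem \ref{bipartitelexi1}: it is an $r$-regular bipartite graph of order $m=2s$ with $m>1$.

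Applying Theorem \ref{bipartitelexi1} with these parameters then gives $\ld(C_m[\overline{K_{n}}])\le 2$ for every $n>1$; concretely, the required local distance antimagic labeling is the one built from the matrices of \ref{matrixA}--\ref{matrixC} exactly as in the proof of that theorem, split into the two cases according to the parity of $n$. For the matching lower bound, note that $C_m[\overline{K_{n}}]$ has at least one edge, so at least two colors are needed; more precisely $\chi(C_m[\overline{K_{n}}]) = 2\,\chi(\overline{K_{n}}) = 2$ by the identity $\chi(G[H]) = 2\,\chi(H)$ for bipartite $G$ recalled at the start of this subsection, and $\ld \ge \chi$ in general. Combining the two estimates yields $\ld(C_m[\overline{K_{n}}]) = 2$.

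There is no real obstacle here: the only point worth stating explicitly is that $C_m$ genuinely meets the structural requirements of Theorem \ref{bipartitelexi1}, namely $2$-regularity together with a balanced bipartition into two classes of size $m/2$, both of which hold exactly when $m$ is even. (If $m$ were odd, $C_m$ would not be bipartite at all — indeed $\chi(C_m)=3$ — so the parity hypothesis $m$ even is precisely what the argument needs.)
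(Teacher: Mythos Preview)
Your proposal is correct and matches the paper's approach: the corollary is stated without proof precisely because it is the direct specialization of Theorem~\ref{bipartitelexi1} to the $2$-regular bipartite graph $C_m$ (for even $m\ge 4$), exactly as you describe. Your remark that the bipartition is balanced is on point, since the preamble to Theorem~\ref{bipartitelexi1} explicitly assumes $|A|=|B|=s$ with $2s=m$; in fact this is automatic for any $r$-regular bipartite graph by double counting edges, so your verification is a small extra courtesy rather than a gap being filled.
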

	
	\begin{cor}
		For integers \(n>1\) and $p\geq 2$, \(\ld(K_{p,p}[\overline{K_{n}}]) = 2\).
	\end{cor}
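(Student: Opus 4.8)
The plan is to recognise $K_{p,p}$ as a regular bipartite graph and invoke Theorem~\ref{bipartitelexi1} directly, exactly as Corollary~\ref{cycleemptycomposition1} specialises that theorem to even cycles. The complete bipartite graph $K_{p,p}$ carries the natural bipartition $\{A,B\}$ with $A=\{u_1,\dots,u_p\}$ and $B=\{v_1,\dots,v_p\}$, so it has order $m=2p$; since $p\geq 2$ we get $m=2p\geq 4>1$, and $n>1$ holds by hypothesis. Every vertex of $K_{p,p}$ is joined to all $p$ vertices on the opposite side, so $K_{p,p}$ is $r$-regular with $r=p$. Hence $K_{p,p}$ meets every hypothesis of Theorem~\ref{bipartitelexi1} with $s=p$, and that theorem yields $\ld(K_{p,p}[\overline{K_{n}}])=2$.

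There is really no obstacle to overcome here: the statement is an immediate instance of Theorem~\ref{bipartitelexi1}, so the only "work" is checking that the parameters line up ($m=2p>1$, $r=p\geq 2$, $n>1$), which is done above. If one prefers an argument that does not lean on the regularity theorem for the lower bound, one can instead note, via the result of Geller et al., that $\chi(K_{p,p}[\overline{K_{n}}])=2\,\chi(\overline{K_{n}})=2$, and combine this with the general inequality $\ld(G)\geq\chi(G)$; but this reasoning is already contained in the proof of Theorem~\ref{bipartitelexi1}, so no separate verification is needed.
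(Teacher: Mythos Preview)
Your proposal is correct and matches the paper's approach exactly: the corollary is stated in the paper without a separate proof because it is an immediate specialisation of Theorem~\ref{bipartitelexi1}, and your argument simply verifies that $K_{p,p}$ satisfies the hypotheses of that theorem. Nothing further is needed.
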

 We have seen that for a regular bipartite graph $G$, $\chi_{ld}(G[\overline{K_{n}}])=2$. Our next goal is to investigate the same result for non-regular bipartite graphs. The following theorem states that the result is true if all the vertices in a partite set have the same degree. 
 \begin{lemma}\label{nonreglemma1}
     For integers $n\geq 2$ even and $m>1$ and a non-regular bipartite graph $G$ of order $m$ with all the vertices in a partite set having the same degree, we have $\chi_{ld}(G[\overline{K_{n}}])=2$. 
 \end{lemma}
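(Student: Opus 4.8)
The plan is to mimic the structure of Theorem~\ref{bipartitelexi1}, but to account for the fact that the two partite sets $A$ and $B$ may have different sizes, say $|A| = s$ and $|B| = t$ with $s + t = m$ and $s \neq t$ (WLOG $s < t$), and that vertices in $A$ all have common degree $r_A$ while vertices in $B$ all have common degree $r_B$; by counting edges, $s\,r_A = t\,r_B$. As in the regular case, the key idea is that once a vertex of $H_j$ (a copy of $\overline{K_n}$ sitting over $u_j \in A$) has been assigned labels whose \emph{sum over that copy} depends only on which partite set $u_j$ lies in — not on $j$ — then every vertex over $A$ gets the same weight and every vertex over $B$ gets the same weight, giving exactly two colour classes. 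So I want a labeling in which the $n$ labels placed on each copy $H_j^A$ (over $A$) have a common column-sum $\sigma_A$, the $n$ labels on each copy $H_k^B$ (over $B$) have a common column-sum $\sigma_B$, and $\sigma_A \neq \sigma_B$ is arranged (or shown to follow automatically).

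Since $n$ is even here, I would build a single $n \times m$ matrix of the type~\eqref{matrixA}: fill an $n \times m$ array boustrophedon-style with $1,2,\dots,nm$ so that every column sums to $\tfrac{n(nm+1)}{2}$. Assign the first $s$ columns to the copies over $A$ (so $f(u_i^j) = a_{i,j}$ for $1 \le j \le s$) and the last $t$ columns to the copies over $B$ (so $f(v_i^k) = a_{i,\,s+k}$ for $1 \le k \le t$). Then every copy, over $A$ or over $B$, has column-sum exactly $S := \tfrac{n(nm+1)}{2}$. Computing weights: a vertex $u_i^j$ over $A$ is adjacent precisely to all $n$ vertices in each of the $r_A$ copies over $B$-neighbours of $u_j$, so $w(u_i^j) = r_A \cdot S$; similarly $w(v_i^k) = r_B \cdot S$. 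These are equal iff $r_A = r_B$, i.e. iff $G$ is regular — contradicting the hypothesis. Hence the two weights differ, the induced colouring uses two colours, and with the trivial lower bound $\chi_{ld}(G[\overline{K_n}]) \ge \chi(G[\overline{K_n}]) = 2$ we conclude equality.

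The main thing to check carefully — and the only place where something could go wrong — is the claim that $r_A \neq r_B$. Regularity of $G$ was \emph{not} assumed, only that each partite set is degree-homogeneous; and $s\,r_A = t\,r_B$ with $s \neq t$ does \emph{not} by itself force $r_A \neq r_B$ (it does once we also know $s r_A = t r_B$, but only because $s \neq t$ and $r_A, r_B > 0$: if $r_A = r_B$ then $s r_A = t r_A$ forces $s = t$). Actually that does settle it, so the real subtlety is just bookkeeping: I must make sure $G$ being non-regular with homogeneous partite degrees genuinely implies $r_A \ne r_B$ (it does, since if $r_A = r_B$ the graph would be $r_A$-regular), and that $G$ has no isolated vertices so $r_A, r_B \ge 1$, which is implicit in $G$ being a graph for which $G[\overline{K_n}]$ is considered. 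One should also note $G[\overline{K_n}]$ inherits having no isolated vertices from $G$. With those remarks in place the proof is essentially the computation above, and I would present it as a short direct argument rather than splitting into cases, since the parity of $n$ is fixed to be even in the statement and only matrix~\eqref{matrixA} is needed.
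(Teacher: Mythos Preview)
Your proposal is correct and takes essentially the same approach as the paper: build the $n\times m$ boustrophedon matrix~\eqref{matrixA}, assign its first $s$ columns to the copies over $A$ and the remaining columns to the copies over $B$, observe every column sums to $\tfrac{n(nm+1)}{2}$, and conclude that the two vertex weights $r_A\cdot S$ and $r_B\cdot S$ differ because non-regularity with degree-homogeneous parts forces $r_A\neq r_B$. The only difference is cosmetic: the paper simply asserts $a\neq b$ from non-regularity without the edge-counting detour through $s r_A = t r_B$ and $s\neq t$, which you can safely drop.
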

 \begin{proof}
      Let $V(G)=A\cup B$, where $A=\{u_1,u_2,\dots,u_s\}$ and $B=\{v_1,v_2,\dots,v_r\}$ are the two partite sets, be the vertex set of $G$ with $s+r=m$. Also, let $V(\overline{K_{n}})=\{x_1,x_2,\dots,x_n\}$ be the vertex set of $\overline{K_{n}}$. Let $deg(u_p)=a$, for all $1\leq p \leq s$ and $deg(v_q)=b$, for all $1\leq q \leq r$. As $G$ is a non-regular graph, we have $a\not=b$. For $1\leq i \leq n$, let $u_{i}^j$ and $v_{i}^j$ be the vertices of $G[\overline{K_{n}}]$ that correspond with vertices $u_j$ for $1\leq j \leq s$ and $v_j$ for $1\leq j \leq r$ respectively of $G$.
       As $n$ is even, we construct a matrix $A=(a_{i,j})$ of size $n\times m$ using the definition of the Matrix \ref{matrixA}. We use this matrix to label $G[\overline{K_{n}}]$. Define $f\colon V(G[\overline{K_{n}}])\to \{1,2,\dots,mn\}$ by 
         \begin{align*}
             f(u_{i}^j)&=a_{i,j}\quad \text{for $1\leq i \leq n$ and $1\leq j \leq s$},\\
              f(v_{i}^j)&=a_{i,s+j}\quad \text{for $1\leq i \leq n$ and $1\leq j \leq r$}.
         \end{align*}
         For the weight of vertices we have, 
         \begin{align*}
             w(u_{i}^j)&=\sum_{v_j\in N(u_j)}\sum_{i=1}^nf(v_i^j)=\frac{an(nm+1)}{2}\quad \text{for  $1\leq i \leq n$ and $1\leq j \leq s$, }\\
             w(v_{i}^j)&=\sum_{u_j\in N(v_j)}\sum_{i=1}^nf(u_i^j)=\frac{bn(nm+1)}{2}\quad \text{for $1\leq i \leq n$ and $1\leq j \leq r$. }
         \end{align*}
         As $a\not=b$, the two weights obtained are distinct, and hence $f$ is a local distance antimagic labeling of $G[\overline{K_{n}}]$, and the result follows.
 \end{proof}

 \begin{lemma}\label{nonreglemma2}
     For $n\geq 3$ odd and $m>1$ odd and a non-regular bipartite graph $G$ of order $m$ with all the vertices in a partite set having the same degree, we have, $\chi_{ld}(G[\overline{K_{n}}])=2$.
     \begin{proof}
           Let $V(G)=A\cup B$, where $A=\{u_1,u_2,\dots,u_s\}$ and $B=\{v_1,v_2,\dots,v_r\}$ are the two partite sets, be the vertex set of $G$ with $s+r=m$. Also, let $V(\overline{K_{n}})=\{x_1,x_2,\dots,x_n\}$ be the vertex set of $\overline{K_{n}}$. Let $deg(u_p)=a$, for all $1\leq p \leq s$ and $deg(v_q)=b$, for all $1\leq q \leq r$. As $G$ is a non-regular graph, we have $a\not=b$. For $1\leq i \leq n$, let $u_{i}^j$ and $v_{i}^j$ be the vertices of $G[\overline{K_{n}}]$ that correspond with vertices $u_j$ for $1\leq j \leq s$ and $v_j$ for $1\leq j \leq r$ respectively of $G$. As $n$ and $m$ are odd, we have a magic rectangle $A$ of size $n\times m$ with entries $(a_{i,j})$, having the sum of entries in each column as $\frac{n(nm+1)}{2}$. We use this magic rectangle to label $G[\overline{K_{n}}]$. Define $f\colon V(G[\overline{K_{n}}])\to \{1,2,\dots,mn\}$ by 
         \begin{align*}
             f(u_{i}^j)&=a_{i,j}\quad \text{for $1\leq i \leq n$ and $1\leq j \leq s$,}\\
              f(v_{i}^j)&=a_{i,s+j}\quad \text{for $1\leq i \leq n$ and $1\leq j \leq r$.}
         \end{align*}
         For the weight of the vertices we have, 
         \begin{align*}
             w(u_{i}^j)&=\sum_{v_j\in N(u_j)}\sum_{i=1}^nf(v_i^j)=\frac{an(nm+1)}{2}\quad \text{for  $1\leq i \leq n$ and $1\leq j \leq s$, }\\
             w(v_{i}^j)&=\sum_{u_j\in N(v_j)}\sum_{i=1}^nf(u_i^j)=\frac{bn(nm+1)}{2}\quad \text{for $1\leq i \leq n$ and $1\leq j \leq r$. }
         \end{align*}
          As $a\not=b$, the two weights obtained are distinct, and hence $f$ is a local distance antimagic labeling of $G[\overline{K_{n}}]$, and the result follows.
         
     \end{proof}
 \end{lemma}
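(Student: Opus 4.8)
The plan is to prove the two bounds $\chi_{ld}(G[\overline{K_n}])\ge 2$ and $\chi_{ld}(G[\overline{K_n}])\le 2$ separately. The lower bound is immediate: a non-regular graph has at least one edge, so $G$ is bipartite with a nonempty edge set, whence $G[\overline{K_n}]$ is again bipartite (the fibres over a single partite set form an independent set, and every edge runs between the two sides) and not edgeless; therefore $\chi(G[\overline{K_n}])=2$, and $\chi_{ld}\ge\chi$ always holds. So everything reduces to exhibiting one local distance antimagic labeling of $G[\overline{K_n}]$ that uses only two colours.

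For the upper bound I would mimic the construction of Lemma \ref{nonreglemma1}, replacing the matrix $A$ of Matrix \ref{matrixA} (which requires $n$ even) by a magic rectangle. Write $V(G)=A\cup B$ with $A=\{u_1,\dots,u_s\}$, $B=\{v_1,\dots,v_r\}$, $s+r=m$, and let $a=\deg_G(u_p)$, $b=\deg_G(v_q)$ be the common degrees on the two sides, so $a\ne b$ by non-regularity. Since $n$ and $m$ are both odd and exceed $1$, I invoke the classical existence theorem for magic rectangles to obtain an $n\times m$ array $(a_{i,j})$ whose entries are exactly $1,2,\dots,nm$ and in which every column sums to $\frac{n(nm+1)}{2}$ (only the column-sum property will be used, not the row sums). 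Then I label the copy of $\overline{K_n}$ sitting over $u_j$ by the $j$-th column and the copy over $v_j$ by the $(s+j)$-th column: $f(u_i^j)=a_{i,j}$ for $1\le j\le s$ and $f(v_i^j)=a_{i,s+j}$ for $1\le j\le r$. This is a bijection $V(G[\overline{K_n}])\to\{1,\dots,mn\}$.

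The verification is then the same one-line computation as in Lemma \ref{nonreglemma1}. Because $G$ is bipartite, the neighbours of $u_i^j$ in $G[\overline{K_n}]$ are precisely the vertices of the $a$ fibres lying over $N_G(u_j)\subseteq B$; summing the corresponding column sums gives $w(u_i^j)=a\cdot\frac{n(nm+1)}{2}$, independent of $i$ and $j$, and symmetrically $w(v_i^j)=b\cdot\frac{n(nm+1)}{2}$. Adjacent vertices of $G[\overline{K_n}]$ always lie on opposite sides of the bipartition and $a\ne b$, so $f$ is local distance antimagic and induces exactly the two colours $a\cdot\frac{n(nm+1)}{2}$ and $b\cdot\frac{n(nm+1)}{2}$. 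Hence $\chi_{ld}(G[\overline{K_n}])\le 2$, and together with the lower bound the lemma follows.

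The one genuinely non-routine ingredient — and the step I expect to be the crux — is the existence of an $n\times m$ magic rectangle; this is where both parity hypotheses are needed, since a $p\times q$ magic rectangle exists only when $p\equiv q\pmod 2$ (ignoring the degenerate $1\times 1$ case), and it is exactly this parity obstruction that stops the present argument from covering the mixed-parity cases. Once the rectangle is in hand, the remainder is bookkeeping with column sums, entirely parallel to the even case already treated in Lemma \ref{nonreglemma1}.
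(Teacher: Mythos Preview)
Your proposal is correct and follows essentially the same approach as the paper: both use an $n\times m$ magic rectangle (available since $n,m$ are odd and at least $3$) to label the fibres column-by-column, then compute the two weights $\frac{an(nm+1)}{2}$ and $\frac{bn(nm+1)}{2}$ via the constant column sums. Your additional remarks on the lower bound and on why the parity hypotheses are needed for the magic rectangle are accurate and make explicit what the paper leaves implicit.
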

 \begin{theorem}\label{nonregcomplete}
     For integers $n>1$ and $a\not=b$, $\chi_{ld}(K_{a,b}[\overline{K_{n}}])=2$.
     \begin{proof}
         Let $V(K_{a,b})=\{u_1,u_2,\dots,u_a\}\cup\{v_1,v_2,\dots,v_b\}$ and $V(\overline{K_{n}})=\{x_1,x_2,\dots,x_n\}$ be the vertex sets of $K_{a,b}$ and $\overline{K_{n}}$ respectively. For $1\leq i \leq n$, let $u_{i}^j$ and $v_{i}^j$ be the vertices of $K_{a,b}[\overline{K_{n}}]$ that correspond with vertices $u_j$ for $1\leq j \leq a$ and $v_j$ for $1\leq j \leq b$ respectively of $K_{a,b}$. If $n$ is even, the result follows from Lemma \ref{nonreglemma1}. If $n$ and $a+b$ are odd, the result follows from Lemma \ref{nonreglemma2}. We shall now prove the result for the case; $n$ is odd and $a+b$ is even. Without loss of generality, let us assume $a<b$. Define $f\colon V(K_{a,b}[\overline{K_{n}}])\to \{1,2,\dots,(a+b)n\}$ by 
         \begin{align*}
             f(u_{i}^j)&=(i-1)a+j\quad \text{for $i=1,2,\dots,n$ and $j=1,2,\dots,a$,}\\
             f(v_{i}^j)&=an+(i-1)b+j\quad \text{for $i=1,2,\dots,n$ and $j=1,2,\dots,b$.}
         \end{align*}
         For the weight of the vertices we have, 
         \begin{align*}
             w(u_{i}^j)&=\sum_{v_j\in N(u_j)}\sum_{i=1}^nf(v_i^j)=\frac{bn(bn+1)}{2}+abn^2\hspace{0.7cm}  \text{for $1\leq i \leq n$ and $j=1,2,\dots,a$,}\\
             w(v_{i}^j)&=\sum_{u_j\in N(v_j)}\sum_{i=1}^nf(u_i^j)=\frac{an(an+1)}{2}\hspace{0.7cm}  \text{for $1\leq i \leq n$ and $j=1,2,\dots,b$.}
         \end{align*}
         Clearly, both the weights are distinct and therefore, $f$ is a local distance antimagic labeling. Hence, $\chi_{ld}(K_{a,b}[\overline{K_{n}}])=2$.
         
     \end{proof}
 \end{theorem}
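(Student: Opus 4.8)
The plan is to prove the two inequalities separately. For the lower bound, note that $K_{a,b}$ is bipartite and (since $a,b\ge 1$) has an edge, so $K_{a,b}[\overline{K_{n}}]$ has an edge and therefore $\chi_{ld}(K_{a,b}[\overline{K_{n}}])\ge \chi(K_{a,b}[\overline{K_{n}}])\ge 2$; equivalently, the Geller et al.\ formula $\chi(G[H])=2\chi(H)$ for bipartite $G$ gives $\chi(K_{a,b}[\overline{K_{n}}])=2\chi(\overline{K_{n}})=2$. So everything reduces to exhibiting a local distance antimagic labeling of $K_{a,b}[\overline{K_{n}}]$ that induces only two distinct weights.

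For the upper bound I would split on parity and reuse the work already done. If $n$ is even, then because $a\ne b$ the graph $K_{a,b}$ is a non-regular bipartite graph of order $m:=a+b>1$ in which every vertex of a fixed part has the same degree ($b$ on the $a$-side, $a$ on the $b$-side), so Lemma~\ref{nonreglemma1} applies directly and yields $\chi_{ld}(K_{a,b}[\overline{K_{n}}])=2$. If $n$ is odd and $a+b$ is odd, then $n$ and $m$ are both odd and at least $3$, a magic rectangle of size $n\times m$ exists, and Lemma~\ref{nonreglemma2} applies. The remaining case, $n$ odd and $a+b$ even, is exactly the one the lemmas cannot reach, since no magic rectangle of order $n\times m$ exists when $n$ is odd and $m$ is even; here a bespoke labeling is needed.

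For that case assume without loss of generality $a<b$. I would label the two sides block by block: $f(u_i^j)=(i-1)a+j$ for $1\le i\le n$, $1\le j\le a$ (so copy $i$ of the $a$-side gets $\{(i-1)a+1,\dots,ia\}$, and the $a$-side altogether gets $\{1,\dots,an\}$), and $f(v_i^j)=an+(i-1)b+j$ for $1\le i\le n$, $1\le j\le b$ (the $b$-side getting $\{an+1,\dots,(a+b)n\}$); this is a bijection onto $\{1,\dots,(a+b)n\}$. Since the only edges of $K_{a,b}[\overline{K_{n}}]$ join the $a$-side to the $b$-side, and every $a$-side vertex is adjacent to \emph{all} of the $b$-side (and vice versa), it suffices that the two side-totals differ. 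Summing $an+(i-1)b+j$ over all $i,j$ and telescoping gives $w(u_i^j)=\frac{bn(bn+1)}{2}+abn^2$ and, symmetrically, $w(v_i^j)=\frac{an(an+1)}{2}$, both independent of $i$ and $j$. Because $a<b$ forces $bn(bn+1)>an(an+1)$ and $abn^2>0$, these two values are distinct, so $f$ induces exactly two weights and $\chi_{ld}(K_{a,b}[\overline{K_{n}}])\le 2$, completing the proof.

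The only genuinely new content is this last case; the first two cases fall out of the lemmas. The points to be careful about are the parity bookkeeping — confirming that Lemmas~\ref{nonreglemma1} and~\ref{nonreglemma2} between them cover all $(n,a+b)$ except ($n$ odd, $a+b$ even) — and the closed-form evaluation of the two weights in the ad hoc construction; both are routine, so I do not anticipate a serious obstacle.
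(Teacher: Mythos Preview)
Your proposal is correct and follows essentially the same approach as the paper: the same parity split into three cases, reduction of the first two to Lemmas~\ref{nonreglemma1} and~\ref{nonreglemma2}, and the identical block labeling $f(u_i^j)=(i-1)a+j$, $f(v_i^j)=an+(i-1)b+j$ with the same weight computations for the remaining case $n$ odd, $a+b$ even. Your write-up is slightly more explicit about the lower bound and why the two weights differ, but the argument is the paper's.
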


	Above, we have studied several examples of non-regular bipartite graphs $G$ for which $\chi_{ld}(G[\overline{K_{n}}])=2$. However, there are some non-regular bipartite graphs for which the result is not true. Below we present few examples of such graphs. 
	\begin{theorem}\label{nonregbipartite}
		Let $G$ be a bipartite graph with two vertices $x$ and $y$  belonging to the same partite set, such as $N(x)\subset N(y)$. Then $\chi_{ld}(G[\overline{K_{n}}])\geq 3$.
	\end{theorem}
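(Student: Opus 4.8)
The plan is to show that no local distance antimagic labeling of $G[\overline{K_{n}}]$ can use only two colors, by exploiting the nested-neighborhood condition $N(x)\subset N(y)$ to force a coincidence of weights between two \emph{adjacent} vertices, contradicting the definition of a local distance antimagic labeling. Since $\chi(G[\overline{K_{n}}])=2$ (as $G[\overline{K_{n}}]$ is bipartite and nontrivial), it suffices to rule out $\chi_{ld}=2$, which then gives $\chi_{ld}(G[\overline{K_{n}}])\ge 3$.

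First I would set up notation: let $x, y$ lie in partite set $A$, and let $z$ be any vertex in $N(x)\subseteq N(y)$ (note $z$ lies in partite set $B$). In $G[\overline{K_{n}}]$, write $x_1,\dots,x_n$, $y_1,\dots,y_n$, $z_1,\dots,z_n$ for the copies of $x,y,z$. The key structural observation is that for each copy-index $i$, the vertices $x_i$ and $y_i$ both lie in the blow-up of partite set $A$, hence $x_i \not\sim y_i$ and more generally no $x_i$ is adjacent to any $y_{i'}$. Now suppose toward a contradiction that $f$ is a local distance antimagic labeling inducing only two weight-classes (colors). Because $G[\overline{K_{n}}]$ is bipartite with the natural bipartition coming from $A$ and $B$, and adjacent vertices must get different colors, the two color classes must be exactly (the blow-ups of) $A$ and $B$ — that is, \emph{every} vertex in the $A$-side has one common weight $W_A$ and every vertex in the $B$-side has the common weight $W_B$, with $W_A \ne W_B$.

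The heart of the argument is then a counting/parity obstruction. I would compute $w(x_i)$ and $w(y_i)$ for a fixed $i$. Since $N(x)\subset N(y)$ in $G$, the closed-up neighborhood of $y_i$ in $G[\overline{K_{n}}]$ strictly contains that of $x_i$: specifically $w(y_i) - w(x_i) = \sum_{v\in N(y)\setminus N(x)} \sum_{k=1}^{n} f(v_k)$, which is a sum over a nonempty set of copies and is therefore strictly positive (all labels are positive integers). But the two-color assumption forces $w(x_i)=W_A=w(y_i)$, giving $w(y_i)-w(x_i)=0$, a contradiction. (If $G[\overline{K_n}]$ were disconnected one would apply this inside the component containing $x$ and $y$; since $x$ and $y$ are both adjacent to every vertex of $N(x)\ne\emptyset$, they lie in the same component, so this is not an issue.) Hence no two-coloring is induced by a local distance antimagic labeling, and combined with $\chi_{ld}\ge\chi=2$ we conclude $\chi_{ld}(G[\overline{K_{n}}])\ge 3$.

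I expect the main subtlety — rather than a genuine obstacle — to be justifying rigorously that a two-colored induced coloring must coincide with the $A/B$ bipartition. This needs the observation that a connected bipartite graph has a \emph{unique} $2$-coloring (up to swapping the classes), so any proper $2$-coloring of the component containing $x,y$ is the bipartition coloring; then $x,y,z$ get colors according to which side they lie on, and the weight-equality $w(x_i)=w(y_i)$ follows immediately. Once that is in place the positivity of $\sum_{v\in N(y)\setminus N(x)}\sum_k f(v_k)$ closes the argument with no computation. An alternative, even softer route is to invoke Proposition~\ref{handaprop}: if $|N(x_i)\,\triangle\,N(y_i)|\le 2$ in $G[\overline{K_n}]$ one gets $w(x_i)\ne w(y_i)$ directly, but since the symmetric difference here is $n\cdot|N(y)\setminus N(x)|$ which can exceed $2$, the direct positivity argument above is cleaner and is the one I would write up.
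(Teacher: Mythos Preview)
Your proof is correct and follows essentially the same approach as the paper: assume a local distance antimagic labeling induces only two colors, deduce that $x_1$ and $y_1$ (lying in the same partite set) must receive equal weight, and derive a contradiction from the strict containment $N(x_1)\subset N(y_1)$, which forces $w(y_1)-w(x_1)>0$. The paper's argument is terser---it simply asserts that $w(x_1)=w(y_1)$ ``as $x$ and $y$ belong to the same partite set'' and that this ``is not possible'' since $N(x_1)\subset N(y_1)$---whereas you spell out the uniqueness of the bipartition coloring on the relevant component and the positivity of the label-sum over $N(y)\setminus N(x)$; but the skeleton is identical.
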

	\begin{proof}
		For $1\leq i \leq n$, let $x_{i}$ and $y_{i}$ be the vertices of $G[\overline{K_{n}}]$ that correspond with vertices $x$ and $y$ of $G$. As $N(x)\subset N(y)$,  by the construction of $G[\overline{K_{n}}]$, $N(x_i)\subset N(y_i)$ for all $1\leq i \leq n$. Consider the vertices $x_1$ and $y_1$ of $G[\overline{K_{n}}]$. If $\chi_{ld}(G[\overline{K_{n}}])=2$, then as $x$ and $y$ belong to the same partite set, we should have $w(x_{1})=w(y_{1})$. But as $N(x_1)\subset N(y_1)$, this is not possible. Therefore $\chi_{ld}(G[\overline{K_{n}}])\geq 3$.
	\end{proof}
 \begin{theorem}
     Let $G$ be a tree of order $m\geq 3$. Then, $\chi_{ld}(G[\overline{K_{n}}])=2$ if and only if $G$ is a star.
     \begin{proof}
         Suppose $G=K_{1,\ m-1}$, then from Theorem \ref{nonregcomplete},  $\chi_{ld}(G[\overline{K_{n}}])=2$. 
         Conversely suppose $G$ is a non-star graph with $\chi_{ld}(G[\overline{K_{n}}])=2$. Then, $G$ has at least two support vertices. Let $x$ and $y$ be adjacent support vertices of $G$. Consider a leaf $u$ attached to the vertex $x$. As vertices $u$ and $y$ are adjacent to vertex $x$, they belong to the same partite set, and in addition, $N(u)\subset N(y)$. Therefore by Theorem \ref{nonregbipartite}, $\chi_{ld}(\overline{K_{n}})\geq3$. This is a contradiction; hence, the graph $G$ must be a star. 
     \end{proof}
 \end{theorem}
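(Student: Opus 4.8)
The plan is to establish the two implications separately, the second by contraposition. For the forward direction, suppose $G$ is a star; since $m\ge 3$ we have $G=K_{1,m-1}$ with $1\neq m-1$, so $G$ is a complete bipartite graph $K_{a,b}$ with $a=1\neq b=m-1$, and Theorem~\ref{nonregcomplete} applies directly to give $\chi_{ld}(G[\overline{K_{n}}])=2$.

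For the converse I would prove the contrapositive: assume $G$ is a tree on $m\ge 3$ vertices that is not a star, and show $\chi_{ld}(G[\overline{K_{n}}])\ge 3$. A tree is bipartite, so fix a bipartition of $G$. The crucial point is that $G$ has a support vertex $x$ possessing both a leaf neighbour $u$ and a non-leaf neighbour $y$. Indeed, $G$ has some support vertex $x$ with a leaf neighbour $u$ (any tree on at least two vertices does); were every neighbour of $x$ a leaf, connectedness of $G$ would force $V(G)=\{x\}\cup N(x)$, i.e. $G=K_{1,\deg(x)}$, contradicting that $G$ is not a star. Hence such a non-leaf neighbour $y$ of $x$ exists.

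Now $u$ and $y$ are distinct vertices of $N(x)$, hence they lie in the same partite set of $G$, while $N(u)=\{x\}$ and $x\in N(y)$ with $\deg(y)\ge 2$, so $N(u)\subsetneq N(y)$. Applying Theorem~\ref{nonregbipartite} with the pair $(u,y)$ playing the role of its $(x,y)$, we obtain $\chi_{ld}(G[\overline{K_{n}}])\ge 3$, contradicting $\chi_{ld}(G[\overline{K_{n}}])=2$; therefore $G$ must be a star.

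I expect the only delicate step to be pinning down the correct local configuration. It is tempting to say that a non-star tree has two adjacent support vertices, but this can fail (for instance for a spider with three legs each of length two), whereas the weaker and always-available statement — a support vertex carrying both a leaf neighbour and a non-leaf neighbour — is exactly the hypothesis that Theorem~\ref{nonregbipartite} consumes. Once that structural observation is in place, the rest is a direct appeal to the two cited theorems.
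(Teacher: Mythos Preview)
Your argument is correct and, in fact, cleaner than the paper's. Both proofs use Theorem~\ref{nonregcomplete} for the forward direction and aim to invoke Theorem~\ref{nonregbipartite} for the converse; the only difference is the structural lemma used to locate the pair of same-colour vertices with nested neighbourhoods. The paper asserts that a non-star tree has two \emph{adjacent support vertices} $x$ and $y$, then takes a leaf $u$ of $x$ so that $N(u)=\{x\}\subset N(y)$. You instead find a support vertex $x$ with a leaf neighbour $u$ and a non-leaf neighbour $y$, which immediately gives $N(u)=\{x\}\subsetneq N(y)$.

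Your caution is well placed: the paper's structural step is not valid as stated. Your spider example (three legs of length two) is exactly a non-star tree whose support vertices are pairwise non-adjacent, so one cannot in general choose adjacent support vertices $x,y$. Your weaker configuration --- a support vertex that also has a non-leaf neighbour --- is always available in a non-star tree (your one-line argument via connectedness is correct) and is precisely what Theorem~\ref{nonregbipartite} needs. Thus your proof repairs a genuine gap in the paper's argument while following the same overall strategy.
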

	
	\begin{cor}
		For integers $n>1$ and  $m\geq 3$, $\chi_{ld}(P_{m}[\overline{K_{n}}])\geq 3$.
	\end{cor}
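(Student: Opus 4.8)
The plan is to derive this directly from Theorem~\ref{nonregbipartite}. Write the path as $P_m = v_1 v_2 \cdots v_m$ and use its standard bipartition, one class consisting of the odd-indexed vertices and the other of the even-indexed vertices. For $m \ge 4$ the two vertices $v_1$ and $v_3$ lie in the same (odd) class, and
\[
N_{P_m}(v_1) = \{v_2\} \subsetneq \{v_2, v_4\} = N_{P_m}(v_3),
\]
so the containment hypothesis of Theorem~\ref{nonregbipartite} is met with $x = v_1$ and $y = v_3$. Applying that theorem gives $\chi_{ld}(P_m[\overline{K_n}]) \ge 3$ at once. (Equivalently, one can invoke the preceding characterisation of trees: for $m \ge 4$ the path $P_m$ is a tree that is not a star, hence $\chi_{ld}(P_m[\overline{K_n}]) \ne 2$; since $P_m[\overline{K_n}]$ is bipartite with at least one edge, $\chi_{ld}(P_m[\overline{K_n}]) \ge \chi(P_m[\overline{K_n}]) = 2$, and therefore it is at least $3$.)

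The point that needs attention is the boundary value $m = 3$, which the corollary as printed includes. Since $P_3 = K_{1,2}$ is a star, Theorem~\ref{nonregcomplete} shows $\chi_{ld}(P_3[\overline{K_n}]) = 2$, so the inequality fails there; accordingly I would state and prove the corollary only for $m \ge 4$. Beyond this boundary case there is no genuine obstacle: once the nested neighbourhoods above are pointed out, the conclusion is a single instance of Theorem~\ref{nonregbipartite}, whose proof already supplies the reason a two-colour labelling is impossible, namely that it would force $w(v_1^1) = w(v_3^1)$ while $N(v_1^1) \subsetneq N(v_3^1)$ and the labels are positive.
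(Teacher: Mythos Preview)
Your argument is exactly the one the paper intends: the corollary is placed immediately after Theorem~\ref{nonregbipartite} (and the tree characterisation) with no separate proof, and your choice $x=v_1$, $y=v_3$ with $N(v_1)=\{v_2\}\subsetneq\{v_2,v_4\}=N(v_3)$ is the natural instantiation of that theorem for $P_m$, $m\ge 4$.

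Your observation about $m=3$ is also correct and worth noting. Since $P_3=K_{1,2}$, one has $P_3[\overline{K_n}]\cong K_{n,2n}$, and Theorem~\ref{nonregcomplete} (or Theorem~\ref{multipartite}) gives $\chi_{ld}(P_3[\overline{K_n}])=2$, so the bound $\ge 3$ fails there; the corollary should read $m\ge 4$. The paper does not address this boundary case, so you have caught a genuine slip in the stated range rather than a flaw in your own reasoning.
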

		\begin{theorem}\label{pathcomposition1}
		For $n> 1$ even and $m\geq 3$, $\chi_{ld}(P_{m}[\overline{K_{n}}])\leq \begin{cases}
			3 &\text{if $m$ is odd,}\\ 	4 &\text{if $m$ is even.}
		\end{cases}$
	\end{theorem}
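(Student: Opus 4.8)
The crucial structural observation is that in $P_{m}[\overline{K_{n}}]$ the weight of a vertex depends only on the copy in which it lies. Write $V(P_{m})=\{v_{1},\dots,v_{m}\}$ with $v_{j}\sim v_{j\pm1}$, and let $H_{j}=\{v_{1}^{j},\dots,v_{n}^{j}\}$ be the blown-up copy corresponding to $v_{j}$. Since $\overline{K_{n}}$ is edgeless, $N(v_{i}^{j})=\bigcup_{p:\,v_{p}\sim v_{j}}H_{p}$, so for any bijection $f$ we get $w(v_{i}^{j})=\sum_{p\sim j}S_{p}$, where $S_{p}=\sum_{k=1}^{n}f(v_{k}^{p})$ is the $p$-th ``column sum''. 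In particular every vertex of $H_{j}$ receives a common weight $W_{j}$, and since $H_{j}$ is independent the only conditions to check are $W_{j}\neq W_{j+1}$ for $1\le j\le m-1$. Thus the whole problem reduces to prescribing the column sums $S_{1},\dots,S_{m}$ and then exhibiting a bijection realising them.

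The plan is to force the column sums into the alternating pattern $S_{p}=S$ for $p$ odd and $S_{p}=T$ for $p$ even, with $S\neq T$ chosen suitably. Then $W_{1}=T$, $W_{m}=S_{m-1}$ (which is $S$ if $m$ is even and $T$ if $m$ is odd), and for $2\le j\le m-1$ one gets $W_{j}=2S$ when $j$ is even and $W_{j}=2T$ when $j$ is odd. Hence the set of weights is $\{T,2S,2T\}$ when $m$ is odd and $\{S,T,2S,2T\}$ when $m$ is even, which gives the claimed bounds $3$ and $4$, provided the values $S,T,2S,2T$ are pairwise distinct and consecutive copies genuinely differ; this all amounts to the three inequalities $S\neq T$, $2S\neq T$, $S\neq 2T$.

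To realise the pattern I would split $\{1,\dots,mn\}$ into the top $kn$ integers, where $k:=\lceil m/2\rceil$, and the bottom $(m-k)n$ integers. Using the boustrophedon scheme of Matrix~\ref{matrixA} (which is available precisely because $n$ is even), fill an $n\times k$ array with the top block so that every column has the common sum $S=\tfrac{n}{2}\bigl((2m-k)n+1\bigr)$, and an $n\times(m-k)$ array with the bottom block so that every column has the common sum $T=\tfrac{n}{2}\bigl((m-k)n+1\bigr)$; then assign the $k$ columns of the first array to the odd-indexed copies $H_{1},H_{3},\dots$ and the $m-k$ columns of the second array to the even-indexed copies $H_{2},H_{4},\dots$. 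A short computation yields $S-T=\tfrac12 mn^{2}>0$ and $S-2T=\tfrac{n}{2}(kn-1)\neq0$ since $n\ge2$, while $2S>S>T$ is immediate; hence all three required inequalities hold, $f$ is a local distance antimagic labeling, and it uses at most $3$ colors when $m$ is odd and at most $4$ colors when $m$ is even.

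The only genuine work, and hence the ``main obstacle'', is the bookkeeping in the previous paragraph: verifying that the two boustrophedon sub-arrays together use each label of $\{1,\dots,mn\}$ exactly once and that the induced column sums are exactly the constants $S$ and $T$ claimed. Once that is done, the non-degeneracy of $\{S,T,2S,2T\}$ collapses to the trivial facts $mn^{2}\neq0$ and $kn\neq1$. Both parities of $m$ are covered by the same construction; the parity of $m$ only affects whether the far-end value $S_{m-1}$ equals $S$ or $T$, and hence whether three or four distinct weights occur.
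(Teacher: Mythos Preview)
Your proof is correct and follows essentially the same approach as the paper: both use the boustrophedon Matrix~\ref{matrixA} (available since $n$ is even) to force a common column sum on the odd-indexed copies and another common column sum on the even-indexed copies, then verify the resulting three or four weights are distinct on adjacent copies. The only cosmetic differences are that you assign the larger block of labels to the odd copies rather than the even ones and treat both parities of $m$ in a single unified construction, whereas the paper writes out the cases $m=2k+1$ and $m=2k$ separately.
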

	\begin{proof}
		Let $V(P_m)=\{v_1,v_2,\dots,v_m\}$ and $V(\overline{K_{n}})=\{x_1,x_2,\dots,x_n\}$ be the vertex sets of $P_m$ and $\overline{K_{n}}$ respectively. Let $x_{i}^j$ be the vertices of $P_{m}[\overline{K_{n}}]$, where $1\leq j \leq m$ and $1\leq i \leq n$. We shall deal with the proof in two cases depending upon the parity of $m$.\par 
  \begin{description}
      \item[Case 1:] $m$ is odd.\par 
      Let $m=2k+1$ for some positive integer $k$. Using the definition of Matrix \ref{matrixA}, we construct two matrices $A=(a_{i,j})$ and $B=(b_{i,j})$ of size $n\times (k+1)$ and $n\times k$ respectively. We use these two matrices to label $P_m[\overline{K_{n}}]$.
		Define $f\colon V(P_m[\overline{K_{n}}])\to \{1,2,\dots,mn\}$ by 
		\begin{equation*}
			f(x_{i}^j)=
			\begin{cases}
				a_{i,(\frac{j+1}{2})} &\text{for $j\equiv 1\ (\bmod\ 2)$ and $1\leq i \leq n$},\\
				b_{i,(\frac{j}{2})}+n(k+1) &\text{for $j\equiv 0\ (\bmod\ 2)$ and $1\leq i \leq n$}.
			\end{cases}
		\end{equation*}
		For the weight of the vertices, we have, for $1\leq i \leq n$,  
  \begin{align*}
      w(x_i^1)&=\sum_{i=1}^n f(x_i^2)= \frac{n^2k+n}{2}+n^2k+n^2=\sum_{i=1}^n f(x_i^{m-1})= w(x_i^m),\\
      w(x_{i}^j)&=\sum_{i=1}^n\big(f(x_i^{j-1})+f(x_i^{j+1})\big)=
			\begin{cases}
				n^2k+2n^2k\\+2n^2+n&\text{for $j\equiv 1\ (\bmod\ 2)$, $j\not=1,m,$}\\\\
				n^2k+n^2+n &\text{for $j\equiv 0\ (\bmod\ 2)$.}
			\end{cases}
  \end{align*}
		
		It can easily be verified that all three weights are distinct. As $w(x_i^j)\neq w(x_i^{j+1})$, for all $1\leq i \leq n$ and $1\leq j \leq  m-1$, $f$ is a local distance antimagic labeling of $P_m[\overline{K_{n}}]$ that assigns three distinct weights.
  \item[Case 2:] $m$ is even.\par
  Let $m=2k$ for some positive integer $k$. Using the definition of Matrix \ref{matrixA} we construct a matrix $A=(a_{i,j})$ of size $n\times k$ and use it to label $P_m[\overline{K_{n}}]$.
		Define $f\colon V(P_m[\overline{K_{n}}])\to \{1,2,\dots,mn\}$ by 
		\begin{equation*}
			f(x_{i}^j)=
			\begin{cases}
				a_{i,(\frac{j+1}{2})} &\text{for $j\equiv 1\ (\bmod\ 2)$ and $1\leq i \leq n$,}\\
				a_{i,(\frac{j}{2})}+nk &\text{for $j\equiv 0\ (\bmod\ 2)$ and $1\leq i \leq n$.}
			\end{cases}
		\end{equation*}
		For the weight of the vertices, we have, for $1\leq i \leq n$,
  \begin{align*}
    w(x_{i}^1)&=\sum_{i=1}^n f(x_i^2)=\frac{n^2k+n}{2}+n^2k,\\
    w(x_{i}^m)&=\sum_{i=1}^n f(x_i^{m-1})=\frac{n^2k+n}{2},\\
    w(x_i^j)&=\sum_{i=1}^n\big(f(x_i^{j-1})+f(x_i^{j+1})\big)=
    \begin{cases}
        n^2k+n+2n^2k&\text{for $j\equiv 1\ (\bmod\  2)$, $j\not=1$,}\\
				n^2k+n &\text{for $j\equiv 0\ (\bmod\ 2)$, $j\not=m$.}
    \end{cases}
  \end{align*}
		
		It can easily be verified that all four weights are distinct. As $w(x_i^j)\neq w(x_i^{j+1})$, for all $1\leq i \leq n$ and $1\leq j \leq m-1$, $f$ is a local distance antimagic labeling of $P_m[\overline{K_{n}}]$ that assigns four distinct weights.
  \end{description}	
	\end{proof}
	\begin{theorem}\label{pathcomposition2}
		For $n>1$ odd and $m\geq 3$, $\chi_{ld}(P_{m}[\overline{K_{n}}])\leq \begin{cases}
			4 &\text{for $m\equiv 1\  (\bmod\ 4)$,}\\	3 &\text{for $m\equiv 3\ (\bmod\ 4)$,}\\ 	4 &\text{for $m$ is even.}
		\end{cases}$
	\end{theorem}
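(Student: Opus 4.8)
The plan is to follow the template of Theorem~\ref{pathcomposition1} (the analogous statement for $n$ even), but now using as building blocks the matrices $B$ and $C$ of Matrix~\ref{matrixB} and Matrix~\ref{matrixC} — the constant‑column‑sum arrays tailored to odd $n$ — together with magic rectangles; the lower‑bound side is free, since $\chi_{ld}(P_m[\overline{K_n}])\ge\chi(P_m[\overline{K_n}])=2$ always and $\ge 3$ whenever $P_m$ is not a star. Write $V(P_m)=\{v_1,\dots,v_m\}$ and let $H_j=\{x_1^j,\dots,x_n^j\}$ be the copy of $\overline{K_n}$ over $v_j$. Because $P_m[\overline{K_n}]$ joins $H_j$ completely to $H_{j-1}$ and $H_{j+1}$ and to nothing else, the weight of a vertex of $H_j$ depends only on $j$: writing $S_j=\sum_{i=1}^n f(x_i^j)$ for the column sum of the $j$‑th copy, we get $w(x_i^1)=S_2$, $w(x_i^m)=S_{m-1}$ and $w(x_i^j)=S_{j-1}+S_{j+1}$ for $1<j<m$. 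Thus the whole task is to split $\{1,\dots,nm\}$ into $m$ blocks of size $n$ whose sums $S_1,\dots,S_m$ make consecutive copies differ while only three or four distinct weights occur. The case split by $m \bmod 4$ is forced by a parity fact: for $n$ odd, an $n\times c$ array formed of $c$ consecutive runs of length $n$ has constant column sum only when $c$ is odd, so the odd‑indexed and even‑indexed copies cannot both be given constant column sums.

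For $m$ even, write $m=2k$ and assign the columns of a copy of $B$ of size $n\times k$ to the $k$ odd copies and the columns of a copy of $C$ of size $n\times k$ to the $k$ even copies; their entries together fill $\{1,\dots,2nk\}=\{1,\dots,nm\}$. With $\sigma_B,\sigma_C$ the two constant column sums (distinct, as shown in the paper), the weights occurring are $\sigma_C$ at copy $1$, $\sigma_B$ at copy $m$, $2\sigma_B$ at interior even copies and $2\sigma_C$ at interior odd copies; plugging in the closed forms confirms these four values are pairwise distinct and that consecutive copies never coincide, so $\chi_{ld}(P_m[\overline{K_n}])\le 4$.

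For $m$ odd, write $m=2k+1$. If $m\equiv 3\pmod{4}$, then $k$ is odd, so there is a magic rectangle of size $n\times k$; place its columns (entries $\{1,\dots,nk\}$, constant column sum $\tau$) on the $k$ even copies, and distribute among the $k+1$ odd copies, alternately along $v_1,v_3,\dots,v_{2k+1}$, the columns of a copy of $B$ and a copy of $C$ of size $n\times\tfrac{k+1}{2}$, all shifted up by $nk$ so that their entries are $\{nk+1,\dots,nm\}$; let $\sigma_B',\sigma_C'$ be the resulting column sums. Then every even copy has weight $\sigma_B'+\sigma_C'$, every interior odd copy has weight $2\tau$, and copies $1$ and $m$ have weight $\tau$ — three values — giving $\chi_{ld}(P_m[\overline{K_n}])\le 3$. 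If instead $m\equiv 1\pmod{4}$, then $k$ is even, so $k+1$ is odd; place a magic rectangle of size $n\times(k+1)$ (entries $\{1,\dots,n(k+1)\}$, constant column sum $\sigma$) on the $k+1$ odd copies and give the $k$ even copies, alternately, the columns of shifted copies of $B$ and $C$ of size $n\times\tfrac{k}{2}$ (entries $\{n(k+1)+1,\dots,nm\}$, column sums $\tau_B,\tau_C$). Now copy $1$ has weight $\tau_B$, copy $m$ has weight $\tau_C$, interior odd copies have weight $\tau_B+\tau_C$ and interior even copies have weight $2\sigma$ — four values — so $\chi_{ld}(P_m[\overline{K_n}])\le 4$.

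The step I expect to be the real work is verifying, from the explicit column‑sum formulas for $B$, $C$ and for the magic rectangle, that in each case these three or four candidate weights are genuinely pairwise distinct and that no edge joins copies of equal weight; this comes down to a handful of inequalities in $n$ and $k$, with extra care needed in the degenerate small cases $k=1$ (so $m=3$, where $P_m$ is a star and only two weights actually appear) and $m=5$, in which some of the ``interior'' weights do not occur at all. Granting that bookkeeping, each $f$ is a bijection $V(P_m[\overline{K_n}])\to\{1,\dots,nm\}$ under which adjacent copies of $\overline{K_n}$ receive different weights, hence a local distance antimagic labeling realizing the asserted number of colors, which proves the stated bounds.
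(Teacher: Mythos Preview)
Your proposal is correct and follows essentially the same approach as the paper: the paper likewise splits into the three cases $m\equiv 1\pmod 4$, $m\equiv 3\pmod 4$, and $m$ even, and in each case places magic‑rectangle columns on one parity class of copies and the $B$/$C$ columns of Matrices~\ref{matrixB}--\ref{matrixC} alternately on the other, obtaining exactly the three or four weights you describe. The only cosmetic differences are which block receives the additive shift and the choice of parameter ($m=4k+1$ or $4k+3$ versus your $m=2k+1$ with $k$ even or odd); the paper then carries out explicitly the distinctness ``bookkeeping'' you flag.
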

	\begin{proof}
		Let $V(P_m)=\{v_1,v_2,\dots,v_m\}$ and $V(\overline{K_{n}})=\{x_1,x_2,\dots,x_n\}$ be the vertex sets of $P_m$ and $\overline{K_{n}}$ respectively.  Let $x_{i}^j$ be the vertices of $P_{m}[\overline{K_{n}}]$, where $1\leq j \leq m$ and $1\leq i \leq n$. We shall deal with the proof in three cases depending upon the nature of $m$.\par 
  \begin{description}
      \item[Case 1:] $m\equiv 1\ (\bmod\ 4).$\par
      Let $m=4k+1$ for some positive integer $k$. Since $n$ and $2k+1$ are odd, there is a magic rectangle $A$ with entries $a_{i,j}$ of size $n\times (2k+1)$. Using the definition of Matrix \ref{matrixB} and Matrix \ref{matrixC}, we construct two matrices $B=(b_{i,j})$ and $C=(c_{i,j})$ respectively of size $n\times k$. We use these constructions to label $P_m[\overline{K_{n}}]$.
		Define $f\colon V(P_m[\overline{K_{n}}])\to \{1,2,\dots,mn\}$ by 
		\begin{equation*}
			f(x_{i}^j)=
			\begin{cases}
				a_{i,(\frac{j+1}{2})}+2nk &\text{for $j\equiv 1\ (\bmod\ 2)$ and $1\leq i \leq n$,}\\
				b_{i,(\frac{j+2}{2})} &\text{for $j\equiv 2\ (\bmod\ 4)$ and $1\leq i \leq n$,}\\
				c_{i,(\frac{j}{4})} &\text{for $j\equiv 0\ (\bmod\ 4)$ and $1\leq i \leq n$.}
			\end{cases}
		\end{equation*}
  For the weight of the vertices we have, for $1\leq i \leq n$,
  \begin{align*}
    w(x_{i}^1)&=\sum_{i=1}^n f(x_i^2)=kn^2-4k+2,\\
    w(x_{i}^m)&=\sum_{i=1}^n f(x_i^{m-1})=kn^2+4k+n-2 ,\\
    w(x_i^j)&=\sum_{i=1}^n\big(f(x_i^{j-1})+f(x_i^{j+1})\big)=
    \begin{cases}
        2n^2k+n &\text{for $j\equiv 1\ (\bmod\  2)$, $j\not=1$,}\\
	 n^2(2k+1)+n+4n^2k &\text{for $j\equiv 0\ (\bmod\ 2)$, $j\not=m$.}
    \end{cases}
  \end{align*}
		It can easily be verified that all four weights are distinct. As $w(x_i^j)\neq w(x_i^{j+1})$, for all $1\leq i \leq n$ and $1\leq j \leq m-1$, $f$ is a local distance antimagic labeling of $P_m[\overline{K_{n}}]$ that assigns four distinct weights.
  \item[Case 2:] $m\equiv 3\ (\bmod\ 4)$.\par
  Let $m=4k+3$ for some positive integer $k$. Since $n$ and $2k+1$ are odd there is a magic rectangle $A$ with entries $a_{i,j}$ of size $n\times (2k+1)$. Using the definition of Matrix \ref{matrixB} and Matrix \ref{matrixC}, we construct two matrices $B=(b_{i,j})$ and $C=(c_{i,j})$ respectively of size $n\times (k+1)$. We use these constructions to label $P_m[\overline{K_{n}}]$.
		Define $f\colon V(P_m[\overline{K_{n}}])\to \{1,2,\dots,mn\}$ by 
		\begin{equation*}
			f(x_{i}^j)=
			\begin{cases}
				a_{i,(\frac{j}{2})}+2n(k+1) &\text{for $j\equiv 0\ (\bmod\ 2)$ and $1\leq i \leq n$,}\\
				b_{i,(\frac{j+3}{4})} &\text{for $j\equiv 1\ (\bmod\ 4)$ and $1\leq i \leq n$,}\\
				c_{i,(\frac{j+1}{4})} &\text{for $j\equiv 3\ (\bmod\ 4)$ and $1\leq i \leq n$.}
			\end{cases}
		\end{equation*}
		For the weight of the vertices, we have, for $1\leq i \leq n$,
  \begin{align*}
      w(x_i^1)&=\sum_{i=1}^n f(x_i^2)=\frac{n^2(2k+1)+n}{2}+2n^2(k+1) =\sum_{i=1}^n f(x_i^{m-1})= w(x_i^m),\\
      w(x_{i}^j)&=\sum_{i=1}^n\big(f(x_i^{j-1})+f(x_i^{j+1})\big)=
			\begin{cases}
				n^2(6k+5)+n&\text{for $j\equiv 1\ (\bmod\ 2)$, $j\not=1,m,$}\\
				n^2(2k+2)+n  &\text{for $j\equiv 0\ (\bmod\ 2)$.}
			\end{cases}
  \end{align*}
		It can easily be verified that all three weights are distinct. As $w(x_i^j)\neq w(x_i^{j+1})$, for all $1\leq i \leq n$ and $1\leq j \leq m-1$, $f$ is a local distance antimagic labeling of $P_m[\overline{K_{n}}]$ that assigns three distinct weights.
  \item[Case 3:] $m$ is even.\par
  Let $m=2k$ for some positive integer $k$. Using the definition of Matrix \ref{matrixB} and Matrix \ref{matrixC} we construct matrices $B=(b_{i,j})$ and $C=(c_{i,j})$ respectively of size $n\times k$ and use them to label $P_m[\overline{K_{n}}]$.
		Define $f\colon V(P_m[\overline{K_{n}}])\to \{1,2,\dots,mn\}$ by 
		\begin{equation*}
			f(x_{i}^j)=
			\begin{cases}
				b_{i,(\frac{j+1}{2})} &\text{for $j\equiv 1\ (\bmod\ 2)$ and $1\leq i \leq n$,}\\
				c_{i,(\frac{j}{2})} &\text{for $j\equiv 0\ (\bmod\ 2)$ and $1\leq i \leq n$.}
			\end{cases}
		\end{equation*}
   For the weight of the vertices, we have, for $1\leq i \leq n$,
  \begin{align*}
    w(x_{i}^1)&=\sum_{i=1}^n f(x_i^2)=kn^2+4k+n-2,\\
    w(x_{i}^m)&=\sum_{i=1}^n f(x_i^{m-1})=kn^2-4k+2 ,\\
    w(x_i^j)&=\sum_{i=1}^n\big(f(x_i^{j-1})+f(x_i^{j+1})\big)=
    \begin{cases}
        2(kn^2+4k+n-2) &\text{for $j\equiv 1\ (\bmod\  2)$, $j\not=1$,}\\
	 2(kn^2-4k+2) &\text{for $j\equiv 0\ (\bmod\ 2)$, $j\not=m$.}
    \end{cases}
  \end{align*}
		It can easily be verified that all four weights are distinct. As $w(x_i^j)\neq w(x_i^{j+1})$, for all $1\leq i \leq n$ and $1\leq j \leq m-1$, $f$ is a local distance antimagic labeling of $P_m[\overline{K_{n}}]$ that assigns four distinct weights.
  \end{description}
	\end{proof}
	After studying the lexicographic product of bipartite graphs with the complement of the complete graph, we now study the lexicographic product of non-bipartite graphs with the complement of the complete graph. We begin by exploring some regular graphs with chromatic number 3, for which $\chi_{ld}(G[\overline{K_{n}}])=3$.

	\begin{theorem}
		Let \(n\) be even and \(G\) be a \(2r\)-regular graph of order \(m\) with chromatic number \(3\) such that for each vertex \(v\), \(N(v)\) can be partitioned into two monochromatic subsets each of size \(r\). Then \(\ld(G[\overline{K_{n}}]) = 3\).
	\end{theorem}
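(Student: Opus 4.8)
The plan is to prove the two inequalities $\chi_{ld}(G[\overline{K_n}])\ge 3$ and $\chi_{ld}(G[\overline{K_n}])\le 3$ separately. For the lower bound, observe that $G$ sits inside $G[\overline{K_n}]$ as an induced subgraph (pick one vertex from each copy of $\overline{K_n}$), while a proper $3$-colouring of $G$ extends to $G[\overline{K_n}]$ by colouring every vertex of the copy attached to $v_i$ with the colour of $v_i$; hence $\chi(G[\overline{K_n}])=\chi(G)=3$, and since $\chi_{ld}\ge\chi$ always, $\chi_{ld}(G[\overline{K_n}])\ge 3$. So the whole work is to produce a local distance antimagic labelling of $G[\overline{K_n}]$ using exactly three weights.

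For the upper bound, fix a proper $3$-colouring $c$ of $G$ with colour classes $V_1,V_2,V_3$ of sizes $m_1,m_2,m_3$ (each positive, as $\chi(G)=3$), taken so that every vertex has exactly $r$ neighbours in each of the two classes distinct from its own — this is the content of the hypothesis that $N(v)$ splits into two monochromatic halves of size $r$, the two halves necessarily carrying the two colours different from $c(v)$. The idea is to label the $mn$ vertices of $G[\overline{K_n}]$ so that the sum of the labels on each copy of $\overline{K_n}$ depends only on the colour of its base vertex: every copy over $V_i$ should have label-sum $\beta_i$, with $\beta_1,\beta_2,\beta_3$ pairwise distinct. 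Then, writing $B=\beta_1+\beta_2+\beta_3$, the weight of any vertex lying in a copy over $v\in V_i$ equals $r(B-\beta_i)$, because its neighbourhood in $G[\overline{K_n}]$ is the union of the $2r$ copies attached to $N_G(v)$, which by the balance condition consists of $r$ copies of total label $\beta_j$ and $r$ of total label $\beta_k$, where $\{i,j,k\}=\{1,2,3\}$. These three values are distinct because the $\beta_i$ are, and since $\overline{K_n}$ has no edges, adjacency in $G[\overline{K_n}]$ forces the two base vertices to be adjacent in $G$ and hence differently coloured; so adjacent vertices get different weights, and the labelling is local distance antimagic with exactly three weights.

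To realize such a labelling, split $\{1,2,\dots,mn\}$ into three consecutive blocks $I_1=\{1,\dots,m_1n\}$, $I_2=\{m_1n+1,\dots,(m_1+m_2)n\}$, $I_3=\{(m_1+m_2)n+1,\dots,mn\}$, assign $I_i$ to the copies over $V_i$, and inside $I_i$ use the snake construction of Matrix \ref{matrixA} (shifted by the offset $c_i$ of $I_i$, with $c_1=0$, $c_2=m_1n$, $c_3=(m_1+m_2)n$), read as an $n\times m_i$ array. Since $n$ is even, every column of that array has the same sum, so every copy over $V_i$ acquires the common label-sum $\beta_i=\frac{n(nm_i+1)}{2}+nc_i=\frac{n}{2}(nm_i+1+2c_i)$. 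Dividing by $n$, the three quantities $nm_i+2c_i$ equal $nm_1$, $n(2m_1+m_2)$, $n(2m_1+2m_2+m_3)$, which are strictly increasing, so $\beta_1,\beta_2,\beta_3$ are distinct. Together with the previous paragraph this gives a local distance antimagic labelling with three weights, so $\chi_{ld}(G[\overline{K_n}])\le 3$, and combined with the lower bound, $\chi_{ld}(G[\overline{K_n}])=3$.

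The bookkeeping — checking the column sums of Matrix \ref{matrixA} after the shift, and checking the three $\beta_i$ are distinct — is routine and poses no real obstacle, since the blocks $I_1,I_2,I_3$ visibly have different averages. The one step that deserves care, and where all three hypotheses ($2r$-regular, $\chi(G)=3$, $N(v)$ a union of two monochromatic $r$-sets) are genuinely used, is the identity $w(\cdot)=r(B-\beta_{c(v)})$: this is exactly the point where one must know the two monochromatic halves of $N(v)$ carry the two colours other than $c(v)$, so that the weight of a vertex is a function of the colour of its base vertex rather than, say, $2r\beta_j$ for a vertex all of whose neighbours share one colour.
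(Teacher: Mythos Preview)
Your proof is correct and follows essentially the same approach as the paper: both split $\{1,\dots,mn\}$ into three consecutive blocks according to the colour classes, fill each block via the snake matrix of Equation~\ref{matrixA} so that every copy over a given colour class has constant label-sum, and then use the balance hypothesis to conclude that the weight depends only on the colour. Your presentation is slightly more conceptual (writing the weight as $r(B-\beta_i)$ rather than expanding each case), but the construction and verification are the same as the paper's.
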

	\begin{proof}
		Let \( V(G) = \{v_1, \dots, v_m\}\)  be the vertex set of $G$, where  \(\{v_1, \dots, v_k\}\), \(\{v_{k+1}, \dots, v_{k+s}\}\), \(\{v_{k+s+1}, \dots, v_{k+s+t}\}\) are the color partitions with $k+s+t=m$. Note that each of \(k, s\) and \(t\) is atleast \( r\). Further, let $V(\overline{K_{n}})=\{x_{1},x_{2},\dots, x_{n}\}$ be the vertex set for $\overline{K_{n}}$. For $1\leq j \leq m$ and $1\leq i \leq n$, let $v_{i}^j$ be the vertices of $G[\overline{K_{n}}]$ that correspond with vertices $v_{j}$ of $G$. Using the definition of Matrix \ref{matrixA} above, we define three matrices $B=(b_{i,j})$, $C=(c_{i,j})$ and $D=(d_{i,j})$ of sizes $n\times k$, $n\times s$ and $n\times t$ respectively. 
		We use these three matrices to define a labeling $f\colon V(G[\overline{K_{n}}])\to \{1,2,\dots,mn\}$ as follows:
		\begin{equation*} f(v_{i}^j)=
			\begin{cases}
				b_{i,j} &\text{for $1\leq i \leq n$ and $1\leq j \leq k$,}\\
				c_{i,j}+nk &\text{for $1\leq i \leq n$ and $k+1\leq j \leq k+s$,}\\
				d_{i,j}+n(k+s) &\text{for $1\leq i \leq n$ and $k+s+1\leq j \leq m$.}
				
			\end{cases}
		\end{equation*}
		For the weight of vertices, we have, for $1\leq i \leq n$,
  \begin{align*}
     w(v_i^j)= &\sum_{v_p\in N(v_j)}\sum_{i=1}^{n}f(v_i^p)=
     \begin{cases}
         \frac{rn(ns+1)}{2} + \frac{rn(nt + 1)}{2} + rn^2(2k+s)\quad  \text{for $1\leq j \leq k$},\\\\
         \frac{rn(ns+1)}{2} + \frac{rn(nt + 1)}{2} + rn^2(k+s)\quad  \text{for $k+1\leq j \leq k+s$},\\\\
         \frac{rn(nk+1)}{2} + \frac{rn(ns + 1)}{2} + rn^2k\quad  \text{for $k+s+1\leq j \leq m$}.
     \end{cases}
    \end{align*}
		All three weights are distinct and therefore, $f$ is a local distance antimagic labeling of $G[\overline{K_{n}}]$. Hence $\chi_{ld}(G[\overline{K_{n}}])\leq 3$. As $\chi (G[\overline{K_{n}}])=3$, the equality follows.
	\end{proof}
	
	\begin{cor}
		For \(n\) even, \(\ld(K_{m,m,m}[\overline{K_{n}}]) = 3\).
	\end{cor}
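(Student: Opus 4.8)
The plan is to obtain this as an immediate consequence of the preceding theorem on $2r$-regular graphs of chromatic number $3$ whose vertex neighborhoods split into two equal monochromatic halves. So the entire task reduces to checking that $G = K_{m,m,m}$ meets those hypotheses, and then quoting the theorem.

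First I would record the structural facts about $K_{m,m,m}$. It has order $3m$, and every vertex is adjacent to exactly the $2m$ vertices lying outside its own partite set, so $G$ is $2m$-regular; thus it is $2r$-regular with $r = m$. Its chromatic number is $3$, with the three partite sets serving as the colour classes, each of size $m$. These sizes are all equal to $r = m$, consistent with the theorem's requirement that each colour class have size at least $r$.

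Next I would verify the neighbourhood-partition condition: for any vertex $v$ in partite set $i$, the neighbourhood $N(v)$ is precisely the union of the other two partite sets, i.e.\ a disjoint union of two monochromatic sets, each of size $m = r$. Hence $N(v)$ is partitioned into two monochromatic subsets of size $r$, exactly as the theorem demands.

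With $n$ even and all hypotheses of the theorem satisfied by $G = K_{m,m,m}$, the theorem yields $\chi_{ld}(K_{m,m,m}[\overline{K_n}]) = 3$, which is the assertion. There is essentially no obstacle here beyond this routine verification; the only thing to be mildly careful about is confirming that the equal-size colour classes indeed match the value $r = m$ coming from $2r = 2m$, so that the matrices $B$, $C$, $D$ in the theorem's proof all have the required dimensions $n\times m$.
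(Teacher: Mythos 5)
Your verification is correct and matches the paper's intended derivation: $K_{m,m,m}$ is $2m$-regular with chromatic number $3$, its three partite sets are the colour classes, and each vertex's neighbourhood is the union of the other two partite sets, giving the required partition into two monochromatic sets of size $r=m$, so the theorem applies directly.
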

	
	\begin{cor}\label{cyclewithemptyeven}
		For \(n\) even and \(m \equiv 0\ (\bmod\ 3) \), \(\ld(C_{m}[\overline{K_{n}}]) = 3\).
	\end{cor}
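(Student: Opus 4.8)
The plan is to read Corollary~\ref{cyclewithemptyeven} as an immediate specialisation of the theorem that precedes it, applied with $G = C_m$. First I would collect the data the theorem requires about $G$: the cycle $C_m$ is $2$-regular, hence $2r$-regular with $r = 1$, and it has order $m$. Writing $V(C_m) = \{v_1, v_2, \dots, v_m\}$ with $v_i$ adjacent to $v_{i-1}$ and $v_{i+1}$ (subscripts read modulo $m$, so $v_0 = v_m$), I would colour $v_i$ with the residue of $i$ modulo $3$. Because $m \equiv 0 \pmod 3$, this is a proper $3$-colouring of $C_m$: consecutive vertices always get distinct residues, and since $3 \mid m$ this includes the wrap-around pair $v_m, v_1$. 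Hence $\chi(C_m) \le 3$, and as $m$ is an odd multiple of $3$ here, $\chi(C_m) = 3$.

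It remains to check the one substantive hypothesis of the theorem, namely that each open neighbourhood of $G$ splits into two monochromatic sets of the common size $r$. For $C_m$ this is transparent: the neighbours $v_{i-1}$ and $v_{i+1}$ of $v_i$ carry exactly the two colours different from $c(v_i)$, each occurring once, so $N(v_i)$ is the disjoint union of two monochromatic singletons, which matches $r = 1$. Thus every hypothesis of the theorem holds for $G = C_m$ with $n$ even, and invoking it yields $\chi_{ld}(C_m[\overline{K_n}]) = 3$ directly.

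In short, the corollary is a direct application and carries essentially no obstacle; the only place the divisibility assumption $m \equiv 0 \pmod 3$ is doing work is in making the cyclic assignment a proper $3$-colouring in which every vertex sees both of the other two colour classes exactly once. Without $3 \mid m$ neither this colouring nor the balanced monochromatic partition of the neighbourhoods would be available, and the theorem could not be applied; so the verification of these hypotheses is really the whole content of the proof.
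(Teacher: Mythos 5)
Your proposal is correct and is essentially the paper's own route: the corollary is read as the immediate specialisation of the preceding theorem to $G=C_m$ with $r=1$, the residue-mod-$3$ colouring providing the proper $3$-colouring in which each neighbourhood splits into two monochromatic singletons. One remark: your aside that $m$ is an \emph{odd} multiple of $3$ is not merely convenient but essential, since for even $m\equiv 0\pmod 3$ the hypothesis $\chi(C_m)=3$ fails and indeed $\chi_{ld}(C_m[\overline{K_{n}}])=2$ by Corollary~\ref{cycleemptycomposition1}, so the statement should really be read (and is later only used by the paper) for $m\equiv 3\pmod 6$.
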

	
	\begin{theorem}
		Let \(n\) be odd and \(G\) be a \(2r\)-regular graph of order \(m\) with chromatic number \(3\) such that the vertices can be colored as follows:
		\begin{enumerate}
			\item number of vertices having any particular color is odd,
			\item For every vertex, \(r\) of its neighbors have one color while the remaining \(r\)-neighbors have another color.
		\end{enumerate}
		Then \(\chi_{ld}(G[\overline{K_{n}}]) = 3\).
	\end{theorem}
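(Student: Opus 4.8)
The plan is to follow the construction from the preceding (even $n$) theorem, replacing the ``snake'' matrix of Matrix~\ref{matrixA} (which is adapted to even $n$) by magic rectangles, whose existence is guaranteed here precisely because $n$ and each colour-class size are odd. First I would fix notation as in that proof: write $V(G)=\{v_1,\dots,v_m\}$, let $\{v_1,\dots,v_k\}$, $\{v_{k+1},\dots,v_{k+s}\}$, $\{v_{k+s+1},\dots,v_m\}$ be the three colour classes (each of odd size by hypothesis (1), each at least $r\ge 1$, with $k+s+t=m$), let $V(\overline{K_n})=\{x_1,\dots,x_n\}$, and let $v_i^{\,j}$ denote the copy of $v_j$ over $x_i$ in $G[\overline{K_n}]$. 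Since $n,k,s,t$ are all odd, choose magic rectangles $B=(b_{i,j})$, $C=(c_{i,j})$, $D=(d_{i,j})$ of sizes $n\times k$, $n\times s$, $n\times t$ with constant column sums $\tfrac{n(nk+1)}{2}$, $\tfrac{n(ns+1)}{2}$, $\tfrac{n(nt+1)}{2}$ respectively (in the degenerate case $m=3$, $G=K_3$, a class of size $1$ is handled by taking the single column $1,\dots,n$). Then define $f\colon V(G[\overline{K_n}])\to\{1,\dots,mn\}$ by $f(v_i^{\,j})=b_{i,j}$ on the first class, $f(v_i^{\,j})=c_{i,j}+nk$ on the second, and $f(v_i^{\,j})=d_{i,j}+n(k+s)$ on the third, so that $f$ is a bijection and the quantity $\sum_{i=1}^{n}f(v_i^{\,j})$ equals $\tfrac{n(nk+1)}{2}$, $\tfrac{n(ns+1)}{2}+n^2k$, or $\tfrac{n(nt+1)}{2}+n^2(k+s)$ according to the colour of $v_j$.

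The key step is the weight computation. Because the given $3$-colouring of $G$ is proper, hypothesis (2) forces every vertex to have exactly $r$ neighbours in each of the two colour classes other than its own. Summing the relevant column sums over the neighbourhood of $v_j$ therefore gives
\begin{align*}
w(v_i^{\,j})&=\tfrac{rn(ns+1)}{2}+\tfrac{rn(nt+1)}{2}+rn^2(2k+s), && 1\le j\le k,\\
w(v_i^{\,j})&=\tfrac{rn(nk+1)}{2}+\tfrac{rn(nt+1)}{2}+rn^2(k+s), && k+1\le j\le k+s,\\
w(v_i^{\,j})&=\tfrac{rn(nk+1)}{2}+\tfrac{rn(ns+1)}{2}+rn^2k, && k+s+1\le j\le m,
\end{align*}
which are exactly the three weight values appearing in the even $n$ theorem. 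A one-line subtraction shows the first exceeds the second by $\tfrac{rn^2(k+s)}{2}>0$ and the second exceeds the third by $\tfrac{rn^2(s+t)}{2}>0$, so the three weights are pairwise distinct.

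To finish, I would note that in $G[\overline{K_n}]$ two copies $v_i^{\,j}$ and $v_{i'}^{\,j'}$ are adjacent only when $v_j$ and $v_{j'}$ lie in different colour classes of $G$, and such vertices received distinct weights above; hence $f$ is a local distance antimagic labelling using exactly three colours, so $\ld(G[\overline{K_n}])\le 3$. On the other hand, colouring each $v_i^{\,j}$ by the colour of $v_j$ is a proper $3$-colouring of $G[\overline{K_n}]$, while the vertices $v_1^{\,1},\dots,v_1^{\,m}$ induce a copy of $G$, so $\chi(G[\overline{K_n}])=3$; therefore $\ld(G[\overline{K_n}])\ge 3$ and equality follows.

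I expect the only genuine obstacle to be routine bookkeeping: confirming that a magic rectangle of the required odd dimensions, with consecutive-integer entries and constant column sums, is indeed available for each colour-class size (with the size-$1$ situation dealt with separately as above), and checking carefully that hypothesis (2) together with properness really does force the $r$-and-$r$ neighbour split to fall across the two \emph{other} colour classes rather than being concentrated in one of them. Everything else transcribes directly from the even $n$ argument.
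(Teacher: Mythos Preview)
Your proposal is correct and follows essentially the same approach as the paper: the paper also takes three magic rectangles of sizes $n\times k$, $n\times s$, $n\times t$ (possible since all four of $n,k,s,t$ are odd), shifts them to cover $\{1,\dots,mn\}$, and obtains precisely the three weight formulas you list before concluding via $\chi(G[\overline{K_n}])=3$. In fact you are slightly more careful than the paper, which simply asserts the existence of the magic rectangles and the distinctness of the three weights without treating the size-$1$ colour class or writing out the subtractions.
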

	\begin{proof}
 Let \( V(G) = \{v_1, \dots, v_m\}\) be the vertex set of $G$, where  \(\{v_1, \dots, v_k\}\), \(\{v_{k+1}, \dots, v_{k+s}\}\), \(\{v_{k+s+1}, \dots, v_{k+s+t}\}\) are the color partitions,  with $k+s+t=m$. Note that each of \(k, s\) and \(t\) is atleast \( r\). Further, let $V(\overline{K_{n}})=\{x_{1},x_{2},\dots, x_{n}\}$ be the vertex set for $\overline{K_{n}}$. For $1\leq j \leq m$ and $1\leq i \leq n$, let $v_{i}^j$ be the vertices of $G[\overline{K_{n}}]$ that replace vertices $v_{j}$ of $G$. As $k$, $s$, $t$ and $n$ are all odd, we have three magic rectangles $A=(a_{i,j})$, $B=(b_{i,j})$ and $C=(c_{i,j})$ of size $n\times k$, $n\times s$ and $n\times t$ respectively.
		Using these three magic rectangles, we define a labeling
		$f\colon V(G[\overline{K_{n}}])\to \{1,2,\dots,nm\}$ by
		\begin{equation*} f(v_{i}^j)=
			\begin{cases}
				a_{i,j} &\text{for $1\leq i \leq n$ and $1\leq j \leq k$,}\\
				b_{i,j}+nk &\text{for $1\leq i \leq n$ and $k+1\leq j \leq k+s$,}\\
				c_{i,j}+n(k+s) &\text{for $1\leq i \leq n$ and $k+s+1\leq j \leq m$.}
				
			\end{cases}
		\end{equation*}
		For the weight of vertices, we have, for $1\leq i \leq n$,
		\begin{equation*}
			w(v_{i}^j)= \sum_{v_p\in N(v_j)}\sum_{i=1}^{n}f(v_i^p)=
			\begin{cases}
				\frac{rn(ns+1)}{2} + \frac{rn(nt + 1)}{2} + rn^2(2k+s)  &\text{for $1\leq j \leq k$,}\\\\
				\frac{rn(nk+1)}{2} + \frac{rn(nt + 1)}{2} + rn^2(k+s)  &\text{for  $k+1\leq j \leq k+s$,}\\\\
				\frac{rn(nk+1)}{2} + \frac{rn(ns + 1)}{2} + rn^2k  &\text{for $k+s+1\leq j \leq m$.}
				
			\end{cases}
		\end{equation*}
		
		All three weights are distinct and therefore, $f$ is a local distance antimagic labeling of $G[\overline{K_{n}}]$. Hence $\chi_{ld}(G[\overline{K_{n}}])\leq 3$. As $\chi (G[\overline{K_{n}}])=3$, the equality follows.
	\end{proof}
	\begin{cor}
		For odd integers \(n\) and $m$, \(\ld(K_{m,m,m}[\overline{K_{n}}]) = 3\).
	\end{cor}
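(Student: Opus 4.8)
The plan is to obtain this corollary as an immediate instance of the preceding theorem, the one on $2r$-regular graphs of order $m$ with chromatic number $3$ whose neighbourhoods split evenly into two monochromatic halves (for $n$ odd). First I would record that $G=K_{m,m,m}$ is a $2m$-regular graph on $3m$ vertices with $\chi(G)=3$, and that its three partite sets $V_1,V_2,V_3$, each of cardinality $m$, are exactly the colour classes witnessing $\chi(G)=3$. Since $m$ is assumed odd, every colour class has odd size, so hypothesis (1) of the theorem holds.

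Next I would verify hypothesis (2). Fix any vertex $v$, say $v\in V_1$. In $K_{m,m,m}$ the neighbourhood of $v$ is precisely $V_2\cup V_3$, so the $2m$ neighbours of $v$ split into the $m$ vertices of $V_2$, which form one colour class, and the $m$ vertices of $V_3$, which form another; the same holds verbatim for vertices of $V_2$ and of $V_3$. Thus, taking the partition parameter to be $r=m$, we have exactly ``$r$ of the neighbours have one colour while the remaining $r$ have another colour'', and all hypotheses of the theorem are met. In the notation of that proof this amounts to $k=s=t=m$, so the three magic rectangles used there all have size $n\times m$, which is legitimate precisely because $n$ and $m$ are both odd.

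Applying the theorem then yields $\chi_{ld}(K_{m,m,m}[\overline{K_n}])\le 3$ for all odd $n$ and $m$. For the matching lower bound I would invoke $\chi_{ld}(H)\ge\chi(H)$ from the introduction together with $\chi(K_{m,m,m}[\overline{K_n}])=\chi(K_{m,m,m})=3$; the latter holds because the lexicographic product with $\overline{K_n}$ only blows up each vertex of $K_{m,m,m}$ into an independent set of size $n$ and so leaves the chromatic number unchanged. Combining the two bounds gives $\chi_{ld}(K_{m,m,m}[\overline{K_n}])=3$.

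There is no genuine obstacle here: the entire content is carried by the previous theorem, and the only point requiring care is confirming that $K_{m,m,m}$ really does satisfy the evenly-split-neighbourhood condition with $r$ taken to be $m$ itself, so that the theorem applies on the nose.
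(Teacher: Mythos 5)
Your proposal is correct and follows exactly the paper's intended route: the corollary is an immediate instance of the preceding theorem with $G=K_{m,m,m}$, $r=m$, and colour classes $k=s=t=m$ (all odd since $m$ is odd), with the lower bound coming from $\chi_{ld}\geq\chi=3$. Your verification of hypotheses (1) and (2) is exactly the check the paper implicitly relies on, so nothing further is needed.
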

	
	\begin{cor}\label{cyclewithemptyodd}
		For integers \(n\) odd and \(m \equiv 0\ (\bmod\ 3) \), \(\ld(C_{m}[\overline{K_{n}}]) = 3\).
	\end{cor}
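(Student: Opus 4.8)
The plan is to read off the corollary as a direct instance of the theorem just proved, applied to the graph $G = C_m$; the work then reduces to checking that $C_m$ meets the hypotheses of that theorem. Since $C_m$ is $2$-regular, it is $2r$-regular with $r = 1$. I would give it the cyclic proper $3$-coloring assigning $v_j$ the color $j \bmod 3$: because $3 \mid m$ this closes up consistently around the cycle and is proper, so $\chi(C_m) = 3$, and the three color classes $\{v_j : j \equiv c \pmod 3\}$ each contain $m/3$ vertices. For every vertex $v_j$ the two neighbors $v_{j-1}$ and $v_{j+1}$ sit in the two classes distinct from that of $v_j$, which is precisely the theorem's second hypothesis in the case $r = 1$ (one neighbor of one color, the other of a second color).

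The step I expect to be the main obstacle is the first hypothesis, that every color class have odd cardinality. Here the parity of $m$ is forced rather than assumed: the theorem is invoked through its standing requirement $\chi(G) = 3$, and $\chi(C_m) = 3$ holds only when $m$ is odd, so $m$ is odd; together with $3 \mid m$ this makes the common class size $m/3$ odd. As $n$ and $m/3$ are then both odd, the three magic rectangles of size $n \times (m/3)$ used in the theorem's construction are available, and the labeling they produce gives $C_m[\overline{K_n}]$ exactly three distinct vertex weights. The theorem therefore applies verbatim and delivers $\chi_{ld}(C_m[\overline{K_n}]) = 3$, the lower bound $\chi(C_m[\overline{K_n}]) = 3$ being subsumed in its conclusion.

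Thus the only delicate point is the odd-cardinality condition on the color classes, and it resolves uniformly: three-chromaticity of $C_m$ already makes $m$ an odd multiple of $3$, hence $m/3$ odd, which is exactly when the odd-order magic rectangles underpinning the construction exist. I would therefore present the verification in the order regularity, coloring, second hypothesis, and finally the odd-class-size condition, so that this single parity point is isolated at the end and the corollary follows as a one-line appeal to the preceding theorem.
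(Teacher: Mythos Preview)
Your proposal is correct and takes the same approach as the paper, namely reading the corollary off as an immediate instance of the preceding theorem applied to $G=C_m$ with $r=1$ and the cyclic $3$-coloring. You correctly isolate the only nontrivial point---that the color classes must have odd size---and resolve it exactly as intended: the theorem's standing hypothesis $\chi(G)=3$ forces $m$ odd once $G=C_m$, whence $m/3$ is odd and the required $n\times(m/3)$ magic rectangles exist.
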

	In Corollary \ref{cyclewithemptyodd}, we see that if $n$ is odd and  \(m \equiv 0\ (\bmod\ 3) \), then \(\ld(C_{m}[\overline{K_{n}}]) = 3\). In the following Theorem, we generalize this result for any form of odd integer $m$ and any integer $n$.
	\begin{theorem}
		For integers $m\geq 5$ odd and $n\geq 3$, $\chi_{ld}(C_{m}[\overline{K_{n}}])=3$. 
	\end{theorem}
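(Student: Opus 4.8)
The plan is to bound $\chi_{ld}(C_m[\overline{K_n}])$ below by the chromatic number and above by an explicit $3$-weight labelling. For the lower bound, choosing one vertex from each copy of $\overline{K_n}$ exhibits $C_m$ as a subgraph, so $\chi_{ld}(C_m[\overline{K_n}])\ge\chi(C_m[\overline{K_n}])\ge\chi(C_m)=3$ because $m$ is odd; together with the upper bound this forces equality. The whole difficulty is therefore to build, for every odd $m\ge 5$ and every $n\ge 3$, a vertex labelling of $C_m[\overline{K_n}]$ by $\{1,\dots,mn\}$ that induces only three colours. The structural fact I would exploit is that if $H_1,\dots,H_m$ are the consecutive copies and $S_j$ denotes the sum of the labels on the $n$ vertices of $H_j$, then every vertex of $H_j$ has neighbourhood $V(H_{j-1})\cup V(H_{j+1})$ and hence the common weight $W_j:=S_{j-1}+S_{j+1}$ (indices modulo $m$); since $H_j$ spans no edge, such a labelling is a local distance antimagic labelling inducing at most three colours exactly when the cyclic sequence $(W_1,\dots,W_m)$ assumes at most three values with $W_j\neq W_{j+1}$ for all $j$. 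So it is enough to prescribe the copy-sums $S_j$ with this property and then realise them as a partition of $\{1,\dots,mn\}$ into $m$ blocks of size $n$.

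The design of the copy-sums would go as follows. Fix three distinct target sums $a,b,c$. If $m\equiv 0\pmod 3$ the $3$-periodic pattern $(a,b,c,a,b,c,\dots)$ yields $(W_j)=(b+c,a+c,a+b,\dots)$, again $3$-periodic, with three distinct values and no two consecutive equal; this case is in fact already covered by Corollary~\ref{cyclewithemptyeven} (for $n$ even) and Corollary~\ref{cyclewithemptyodd} (for $n$ odd). When $m$ is odd and $m\not\equiv0\pmod 3$ a genuinely $3$-periodic pattern is impossible, and absorbing this parity obstruction is the heart of the argument. I would keep the $3$-periodic pattern on all but a bounded initial segment and patch the defect of length $1$ or $2$ with a fixed block over the same three values: for $m=3t+1$ take $(a,b,c)^{t}a$, and for $m=3t+2$ with $m\ge 11$ take $(a,b,c)^{t-1}(a,a,b,c,c)$, while $m=5$ is treated by the pattern $(a,b,c,a,a)$. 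In each case a short direct computation of all $W_j$ shows the weight sequence uses exactly three values (either $\{a+b,a+c,b+c\}$ or $\{a+b,a+c,2a\}$, all distinct once $a,b,c$ are) and satisfies $W_j\neq W_{j+1}$ for every $j$, including at the seams where the defect block meets the periodic part; this verification is routine but must be done for each residue class of $m$.

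It remains to realise the prescribed copy-sums. If the pattern calls for $m_k$ copies of common sum $\sigma_k$ for $k=1,2,3$ with $m_1+m_2+m_3=m$, I would split $\{1,\dots,mn\}$ into three consecutive intervals of sizes $m_1n,m_2n,m_3n$ and, on each interval, arrange its labels into an $n\times m_k$ array with constant column sums: using Matrix~\ref{matrixA} when $n$ is even, and a magic rectangle when the width is odd or Matrices~\ref{matrixB} and \ref{matrixC} when the width is even, for $n$ odd --- exactly as in Theorems~\ref{pathcomposition1} and \ref{pathcomposition2}. Since the three intervals are disjoint, the resulting three constant column sums are automatically distinct, so they serve as $\sigma_1,\sigma_2,\sigma_3$; assigning each column to the copy that the pattern places in the corresponding sum-class then gives the required bijection $V(C_m[\overline{K_n}])\to\{1,\dots,mn\}$ and hence $\chi_{ld}(C_m[\overline{K_n}])\le 3$. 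I expect the main obstacle to be precisely the odd-cycle parity issue from the previous paragraph: making a short defect block dovetail with the periodic part so that only three weights occur and all consecutive weights stay distinct, while simultaneously ensuring the corresponding block sums are realisable from $\{1,\dots,mn\}$ for every $n\ge 3$ of either parity --- which is why the proof splits into several sub-cases according to $m$ modulo small integers and the parity of $n$.
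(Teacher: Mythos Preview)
Your plan is essentially the paper's: reduce the problem to choosing the copy-sums $S_j$ so that the weights $W_j=S_{j-1}+S_{j+1}$ take exactly three values with no two cyclically consecutive equal, and then realise the $S_j$ by block labellings built from Matrix~\ref{matrixA}, magic rectangles, and Matrices~\ref{matrixB}--\ref{matrixC}, splitting into sub-cases by the parity of $n$ and the residue of $m$. The concrete patterns differ --- for $m\equiv 5\pmod 6$ the paper uses a $4$-periodic scheme $(b,c,a,a)$ with one extra $a$, whereas you keep the $3$-periodic block and absorb the defect in a tail of length five --- but both choices pass the consecutive-weight check, so this is a cosmetic difference.

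One point needs tightening. For $n$ odd and an even block-width $m_k$, an $n\times m_k$ array on a single interval with \emph{all} column sums equal does not exist: the required common column sum $\tfrac{n(m_kn+1)}{2}+ns$ is not an integer. So your phrase ``on each interval, arrange its labels into an $n\times m_k$ array with constant column sums \dots\ Matrices~\ref{matrixB} and~\ref{matrixC} when the width is even'' cannot be read literally, since $B$ and $C$ produce two different column sums, not one. The remedy is exactly what the paper (and Theorems~\ref{pathcomposition1}--\ref{pathcomposition2}) do: place $B$ and $C$ on a \emph{common} interval and assign the $B$-columns to one sum-class and the $C$-columns to another. Your patterns make this painless, because in every case two of the three multiplicities coincide ($t,t,t{+}1$ for $m=3t{+}1$; $t{+}1,t,t{+}1$ for $m=3t{+}2$; $3,1,1$ for $m=5$), and for $n$ odd the paired multiplicity is the even one while the remaining odd-width block admits a magic rectangle. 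With that clarification your argument goes through and matches the paper's.
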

	\begin{proof}
		Let $V(C_{m})=\{v_{1},v_{2},\dots, v_{m}\}$ and $V(\overline{K_{n}})=\{x_{1},x_{2},\dots, x_{n}\}$ be the vertex sets of $C_{m}$ and $\overline{K_{n}}$ respectively. For $1\leq j \leq m$ and $1\leq i \leq n$, let $x_{i}^j$ be the vertices of $C_{m}[\overline{K_{n}}]$.
		We prove the result in two main cases depending upon the parity of $n$.
  \begin{description}
      \item[Case 1:]  $n$ is even. \par
      We study this case in three more cases depending on the nature of $m$.
      \begin{description}
          \item[Case 1.1:] $m\equiv  1\ (\bmod\ 6)$.\par
          Using the definition of Matrix \ref{matrixA}, we construct three matrices $A=(a_{i,j})$, $B=(b_{i,j})$ and $C=(c_{i,j})$. Matrix $A$ is of  size $n \times (\frac{m+2}{3})$, while the matrices $B$ and $C$ are of size $n \times(\frac{m-1}{3})$. We use these three matrices to label $C_{m}[\overline{K_{n}}]$. Define $f\colon V(C_{m}[\overline{K_{n}}])\to \{1,2,\dots,mn\}$ by 
		\begin{equation*}
			f(x_{i}^j)=
			\begin{cases}
				a_{i,(\frac{j+1}{2})} &\text{for $j\equiv 1\ (\bmod\ 3)$ and $1\leq i \leq n$,}\\
				b_{i,(\frac{j+1}{3})}+\frac{n(m+2)}{3} &\text{for $j\equiv 2\ (\bmod\ 3)$ and $1\leq i \leq n$,}\\
				c_{i,(\frac{j}{3})}+\frac{n(2m+1)}{3} &\text{for $j\equiv 0\ (\bmod\ 3)$ and $1\leq i \leq n$.}
			\end{cases}
		\end{equation*}\\
		 For the weight of the vertices, we have, for $1\leq i \leq n$ and $1\leq j \leq m$, $$w(x_{i}^j)=\sum_{i=1}^n\big(f(x_i^{j-1})+f(x_i^{j+1})\big),$$ where $j-1$ and $j+1$ are taken modulo $m$ and therefore,\\
		\begin{equation*}
			w(x_{i}^j)=
			\begin{cases}
				\frac{n}{2}\big(\frac{n(m+2)}{3}+1\big)+\frac{n^{2}(m+1)}{3}\\ +\frac{n}{2}\big(\frac{n(m-1)}{3}+1\big) &\text{ for $j=1$, $j\equiv 0\ (\bmod\ 3)$,}\\\\
				\frac{n}{2}\big(\frac{n(m+2)}{3}+1\big)+\frac{n^{2}(2m+1)}{3}\\ +\frac{n}{2}\big(\frac{n(m-1)}{3}+1\big) &\text{ for $j\equiv 2\ (\bmod\ 3)$,}\\\\
				n\big(\frac{n(m-1)}{3}+1\big)+	\frac{n^{2}(m+1)}{3}\\ +\frac{n^{2}(2m+1)}{3} &\text{ for $j\geq 2$, $j\equiv 1\ (\bmod\ 3)$.}
			\end{cases}
		\end{equation*}\\
   We see that all three weights are distinct.  As $w(x_i^j)\neq w(x_i^{j+1})$, for all $1\leq i \leq n$ and $1\leq j \leq m$, where $j$ and  $j+1$ are taken modulo $m$, $f$ is a local distance antimagic labeling of $C_m[\overline{K_{n}}]$ that assigns three distinct weights to its vertices.
  \item[Case 1.2:] $m\equiv3\ (\bmod\ 6).$ \par
  The result for this case follows from Corollary \ref{cyclewithemptyeven}.
      
      \item[Case 1.3:] $m\equiv5\ (\bmod\ 6).$\par
      When $m\equiv 5\ (\bmod\ 6)$ then either $m\equiv 1\ (\bmod\ 4)$ or $m\equiv 3\ (\bmod\  4)$. So we shall deal with this case in two special cases.
      \begin{description}
          \item[Case 1.3.1:] $m\equiv 1\ (\bmod\ 4).$\par
          Using the definition of Matrix \ref{matrixA}, we construct three matrices $A=(a_{i,j})$, $B=(b_{i,j})$ and $C=(c_{i,j})$. Matrix $A$ is of  size $n \times (\frac{m+1}{2})$, while the matrices $B$ and $C$ are of size $n \times(\frac{m-1}{4})$. We use these three matrices to label $C_{m}[\overline{K_{n}}]$. Define $f\colon V(C_{m}[\overline{K_{n}}])\to \{1,2,\dots,mn\}$ by 
		\begin{equation*}
			f(x_{i}^j)=
			\begin{cases}
				b_{i,(\frac{j+3}{4})}+\frac{n(m+1)}{2} &\text{for $j\equiv 1\ (\bmod\ 4)$, $j\not =m$ and $1\leq i \leq n$,}\\
				c_{i,(\frac{j+2}{4})}+\frac{n(3m+1)}{4} &\text{for $j\equiv 2\ (\bmod\ 4)$ and $1\leq i \leq n$,}\\
				a_{i,(\frac{j+1}{4})} &\text{for $j\equiv 3\ (\bmod\ 4)$ and $1\leq i \leq n$,}\\
				a_{i,(\frac{m-1}{4}+\frac{j}{4})} &\text{for $j\equiv 0\ (\bmod\ 4)$ and $1\leq i \leq n$,}\\
				a_{i, (\frac{m+1}{2})} &\text{for $j=m$ and $1\leq i \leq n$.}
			\end{cases}
		\end{equation*}
		For the weight of the vertices, we have, for $1\leq i \leq n$ and $1\leq j \leq m,$  $$w(x_{i}^j)=\sum_{i=1}^n\big(f(x_i^{j-1})+f(x_i^{j+1})\big),$$ where $j-1$ and $j+1$ are taken modulo $m$ and therefore,\\
		\begin{align*}
			w(x_{i}^j)&=
			\begin{cases}
				\frac{n}{2}\big(\frac{n(m-1)}{4}+1\big)+ \frac{n^{2}(3m+1)}{4}\\ +\frac{n}{2}\big(\frac{n(m+1)}{2}+1\big) &\text{ for $j\equiv 1\ (\bmod\ 2)$, $j\not=m$,}\\\\
				\frac{n}{2}\big(\frac{n(m-1)}{4}+1\big)+ \frac{n^{2}(m+1)}{2} \\ +\frac{n}{2}\big(\frac{n(m+1)}{2}+1\big) &\text{ for $j\equiv 0\ (\bmod\ 2)$, $j=m$, $j\not = m-1$,}\\\\
				n\big(\frac{n(m+1)}{2}+1\big) &\text{ for $j=m-1$.}
			\end{cases}
		\end{align*}\\
  We see that all three weights are distinct.  As $w(x_i^j)\neq w(x_i^{j+1})$, for all $1\leq i \leq n$ and $1\leq j \leq m$, where $j$ and  $j+1$ are taken modulo $m$, $f$ is a local distance antimagic labeling of $C_m[\overline{K_{n}}]$ that assigns three distinct weights to its vertices.\\
		
  \item[Case 1.3.2:] $m\equiv 3\ (\bmod\ 4).$\par
  Using the definition of Matrix \ref{matrixA}, we construct three matrices $A=(a_{i,j})$, $B=(b_{i,j})$ and $C=(c_{i,j})$. Matrix $A$ is of  size $n \times (\frac{m-1}{2})$, while the matrices $B$ and $C$ are of size $n \times(\frac{m+1}{4})$. We use these three matrices to label $C_{m}[\overline{K_{n}}]$. Define $f\colon V(C_{m}[\overline{K_{n}}])\to \{1,2,\dots,mn\}$ by 
		\begin{equation*}
			f(x_{i}^j)=
			\begin{cases}
				b_{i,(\frac{j+3}{4})}+\frac{n(m-1)}{2} &\text{for $j\equiv 1\ (\bmod\ 4)$, and $1\leq i \leq n$,}\\
				c_{i,(\frac{j+2}{4})}+\frac{n(3m-1)}{4} &\text{for $j\equiv 2\ (\bmod\ 4)$ and $1\leq i \leq n$,}\\
				a_{i,(\frac{j+1}{4})} &\text{for $j\equiv 3\ (\bmod\ 4)$ and $1\leq i \leq n$,}\\
				a_{i,(\frac{m+1}{4}+\frac{j}{4})} &\text{for $j\equiv 0\ (\bmod\ 4)$ and $1\leq i \leq n$.}\\
			\end{cases}
		\end{equation*}\\
		For the weight of the vertices, we have, for $1\leq i \leq n$ and $1\leq j \leq m,$  $$w(x_{i}^j)=\sum_{i=1}^n\big(f(x_i^{j-1})+f(x_i^{j+1})\big),$$ where $j-1$ and $j+1$ are taken modulo $m$ and therefore,\\
		\begin{equation*}
			w(x_{i}^j)=
			\begin{cases}
				\frac{n}{2}\big(\frac{n(m+1)}{4}+1\big)+	\frac{n}{2}\big(\frac{n(m-1)}{2}+1\big)+\\ \frac{n^{2}(3m-1)}{4} &\text{ for $j\equiv 1\ (\bmod\ 2)$, $j\not=m$,}\\\\
				\frac{n}{2}\big(\frac{n(m+1)}{4}+1\big)+	\frac{n}{2}\big(\frac{n(m-1)}{2}+1\big)+\\ \frac{n^{2}(m-1)}{2} &\text{ for $j\equiv 0\ (\bmod\ 2)$,}\\\\
				n\big(\frac{n(m+1)}{4}+1\big)+\frac{n^{2}(3m-1)}{4}+\frac{n^{2}(m-1)}{2}&\text{ for $j=m$.}
			\end{cases}
		\end{equation*}\\
		 We see that all three weights are distinct.  As $w(x_i^j)\neq w(x_i^{j+1})$, for all $1\leq i \leq n$ and $1\leq j \leq m$, where $j$ and  $j+1$ are taken modulo $m$, $f$ is a local distance antimagic labeling of $C_m[\overline{K_{n}}]$ that assigns three distinct weights to its vertices.\\
  \end{description}
      \end{description}
      \item[Case 2:] $n$ is odd. \par
      We shall prove the result in three cases depending on the nature of $m$.
      \begin{description}
       \item[Case 2.1:] $m\equiv  1\ (\bmod\  6)$. \par
      As $m\equiv  1\ (\bmod\ 6)$, \big($\frac{m+2}{3}\big)$ is an odd positive integer. So we have a magic rectangle $A$ with entries $(a_{i,j})$ of size $n\times (\frac{m+2}{3})$, having constant column sum as $\frac{n^{2}(m+2)}{6}+\frac{n}{2}$. Further using the definition of Matrix \ref{matrixB} and Matrix \ref{matrixC} given above, we construct two matrices $B=(b_{i,j})$ and $C=(c_{i,j})$ respectively of size $n\times (\frac{m-1}{3})$.  We use these constructions to label $C_{m}[\overline{K_{n}}]$. Define $f\colon V(C_{m}[\overline{K_{n}}])\to \{1,2,\dots,mn\}$ by 
		\begin{equation*}
			f(x_{i}^j)=
			\begin{cases}
				a_{i,(\frac{j+1}{2})} &\text{for $j\equiv 1\ (\bmod\ 3)$ and $1\leq i \leq n$,}\\
				b_{i,(\frac{j+1}{3})}+\frac{n(m+2)}{3} &\text{for $j\equiv 2\ (\bmod\ 3)$ and $1\leq i \leq n$,}\\
				c_{i,(\frac{j}{3})}+\frac{n(2m+1)}{3} &\text{for $j\equiv 0\ (\bmod\ 3)$ and $1\leq i \leq n$.}
			\end{cases}
		\end{equation*}\\
		For the weight of the vertices, we have, for $1\leq i \leq n$ and $1\leq j \leq m,$  $$w(x_{i}^j)=\sum_{i=1}^n\big(f(x_i^{j-1})+f(x_i^{j+1})\big),$$ where $j-1$ and $j+1$ are taken modulo $m$ and therefore,\\
		\begin{equation*}
			w(x_{i}^j)=
			\begin{cases}
				\frac{n}{2}\big(\frac{n(m+2)}{3}+1\big)+(\frac{m-1}{3})n^{2}-4(\frac{m-1}{3})+\\ n^{2}(\frac{m+1}{3})+2&\text{ for $j=1$, $j\equiv 0\ (\bmod\ 3),$}\\\\
				\frac{n}{2}\big(\frac{n(m+2)}{3}+1\big)+(\frac{m-1}{3})n^{2}+4(\frac{m-1}{3})+\\ n^{2}(\frac{2m+1}{3})+n-2 &\text{ for $j\equiv 2\ (\bmod\ 3),$}\\\\
				2(\frac{m-1}{3})n^{2}+n^{2}(m+1)+n &\text{ for $j\geq 2$, $j\equiv 1\ (\bmod\ 3).$}
			\end{cases}
		\end{equation*}\\
		 We see that all three weights are distinct.  As $w(x_i^j)\neq w(x_i^{j+1})$, for all $1\leq i \leq n$ and $1\leq j \leq m$, where $j$ and  $j+1$ are taken modulo $m$, $f$ is a local distance antimagic labeling of $C_m[\overline{K_{n}}]$ that assigns three distinct weights to its vertices.
  \item[Case 2.2:] $m\equiv3\ (\bmod\ 6).$ \par
  The result follows from Corollary \ref{cyclewithemptyodd}.
  \item[Case 2.3:] $m\equiv5\ (\bmod\ 6).$\par
  When $m\equiv 5\ (\bmod\ 6)$ then either $m\equiv 1\ (\bmod \ 4)$ or $m\equiv 3\ (\bmod\ 4)$. So we shall deal with this case in these two special cases.
  \begin{description}
      \item[Case 2.3.1:] $m\equiv 1\ (\bmod\ 4).$ \par
      As $m\equiv  1(\bmod 4)$, \big($\frac{m+1}{2}\big)$ is an odd positive integer. So we have a magic rectangle $A$ with entries $(a_{i,j})$ of size $n\times (\frac{m+1}{2})$, having constant column sum as $\frac{n^{2}(m+1)}{4}+\frac{n}{2}$. Further using the definition of Matrix \ref{matrixB} and Matrix \ref{matrixC} given above, we construct two matrices $B=(b_{i,j})$ and $C=(c_{i,j})$ respectively of size $n\times (\frac{m-1}{4})$.  We use these constructions to label $C_{m}[\overline{K_{n}}]$. Define $f\colon V(C_{m}[\overline{K_{n}}])\to \{1,2,\dots,mn\}$ by 
		\begin{equation*}
			f(x_{i}^j)=
			\begin{cases}
				b_{i,(\frac{j+3}{4})}+\frac{n(m+1)}{2} &\text{for $j\equiv 1\ (\bmod\ 4)$, $j\not =m$ and $1\leq i \leq n$,}\\
				c_{i,(\frac{j+2}{4})}+\frac{n(3m+1)}{4} &\text{for $j\equiv 2\ (\bmod\ 4)$ and $1\leq i \leq n$,}\\
				a_{i,(\frac{j+1}{4})} &\text{for $j\equiv 3\ (\bmod\ 4)$ and $1\leq i \leq n$,}\\
				a_{i,(\frac{m-1}{4}+\frac{j}{4})} &\text{for $j\equiv 0\ (\bmod\ 4)$ and $1\leq i \leq n$,}\\
				a_{i, (\frac{m+1}{2})} &\text{for $j=m$ and $1\leq i \leq n$.}
			\end{cases}
		\end{equation*}\\
		For the weight of the vertices, we have, for $1\leq i \leq n$ and $1\leq j \leq m,$  $$w(x_{i}^j)=\sum_{i=1}^n\big(f(x_i^{j-1})+f(x_i^{j+1})\big),$$ where $j-1$ and $j+1$ are taken modulo $m$ and therefore,\\
		\begin{equation*}
			w(x_{i}^j)=
			\begin{cases}
				\frac{n}{2}\big(\frac{n(m+1)}{2}+1\big)+(\frac{m-1}{4})n^{2}+4(\frac{m-1}{4})+\\ n^{2}(\frac{3m+1}{2})+n-2 &\text{ for $j\equiv 1\ (\bmod\ 2)$, $j\not=m-1$,}\\\\
				\frac{n}{2}\big(\frac{n(m+1)}{2}+1\big)+(\frac{m-1}{4})n^{2}-4(\frac{m-1}{4})+\\ n^{2}(\frac{m+1}{2})+2 &\text{ for $j\equiv 0\ (\bmod\ 2)$,}\\\\
				n\big(\frac{n(m+1)}{2}+1\big) &\text{ for $j=m-1$.}
			\end{cases}
		\end{equation*}\\
		 We see that all three weights are distinct.  As $w(x_i^j)\neq w(x_i^{j+1})$, for all $1\leq i \leq n$ and $1\leq j \leq m$, where $j$ and  $j+1$ are taken modulo $m$, $f$ is a local distance antimagic labeling of $C_m[\overline{K_{n}}]$ that assigns three distinct weights to its vertices.\\
  \item[Case 2.3.2:] $m\equiv 3\ (\bmod\ 4).$\par
  As $m\equiv  3\ (\bmod\ 4)$, \big($\frac{m-1}{2}\big)$ is an odd positive integer. So we have a magic rectangle $A$ with entries $(a_{i,j})$ of size $n\times (\frac{m-1}{2})$, having constant column sum as $\frac{n^{2}(m-1)}{4}+\frac{n}{2}$. Further, using the definition of Matrix \ref{matrixB} and Matrix \ref{matrixC} given above, we construct two matrices $B=(b_{i,j})$ and $C=(c_{i,j})$ respectively of size $n\times (\frac{m+1}{4})$.  We use these constructions to label $C_{m}[\overline{K_{n}}]$. Define $f\colon V(C_{m}[\overline{K_{n}}])\to \{1,2,\dots,mn\}$ by 
		\begin{equation*}
			f(x_{i}^j)=
			\begin{cases}
				b_{i,(\frac{j+3}{4})}+\frac{n(m-1)}{2} &\text{for $j\equiv 1\ (\bmod\ 4)$ and $1\leq i \leq n$,}\\
				c_{i,(\frac{j+2}{4})}+\frac{n(3m-1)}{4} &\text{for $j\equiv 2\ (\bmod\ 4)$ and $1\leq i \leq n$,}\\
				a_{i,(\frac{j+1}{4})} &\text{for $j\equiv 3\ (\bmod\  4)$ and $1\leq i \leq n$,}\\
				a_{i,(\frac{m+1}{4}+\frac{j}{4})} &\text{for $j\equiv 0\ (\bmod\ 4)$ and $1\leq i \leq n$.}\\
			\end{cases}
		\end{equation*}\\
		For the weight of the vertices, we have, for $1\leq i \leq n$ and $1\leq j \leq m,$  $$w(x_{i}^j)=\sum_{i=1}^n\big(f(x_i^{j-1})+f(x_i^{j+1})\big),$$ where $j-1$ and $j+1$ are taken modulo $m$ and therefore,\\
		\begin{equation*}
			w(x_{i}^j)=
			\begin{cases}
				\frac{n}{2}\big(\frac{n(m-1)}{2}+1\big)+(\frac{m+1}{4})n^{2}+4(\frac{m+1}{4})+\\ n^{2}(\frac{3m-1}{4})+n-2 &\text{ for $j\equiv 1\ (\bmod\ 2)$, $j\not=m$,}\\\\
				\frac{n}{2}\big(\frac{n(m-1)}{2}+1\big)+(\frac{m+1}{4})n^{2}-4(\frac{m+1}{4})+\\ n^{2}(\frac{m-1}{2})+2
				&\text{ for $j\equiv 0\ (\bmod\ 2)$,}\\\\
				n\big(\frac{n(m-1)}{2}+1\big)+\frac{n^{2}(5m-3)}{4}+\frac{n^{2}(m+1)}{2}+n&\text{ for $j=m$.}
			\end{cases}
		\end{equation*}\\
		We see that all three weights are distinct.  As $w(x_i^j)\neq w(x_i^{j+1})$, for all $1\leq i \leq n$ and $1\leq j \leq m$, where $j$ and  $j+1$ are taken modulo $m$, $f$ is a local distance antimagic labeling of $C_m[\overline{K_{n}}]$ that assigns three distinct weights to its vertices.
  \end{description}
      \end{description}
       \end{description}
	 In each case, we get three distinct weights and therefore $\chi_{ld}(C_m[\overline{K_{n}}])\leq 3$. As $\chi(C_m[\overline{K_{n}}])= 3$, we have $\chi_{ld}(C_m[\overline{K_{n}}])= 3.$
	\end{proof}

	After studying graphs with chromatic number 3 satisfying $\chi_{ld}(G[\overline{K_{n}}])=3$, we now study the lexicographic product of the complete graph with the complement of the complete graph. 
	\begin{theorem}
 For positive integers $m$ and $n>1$, and a $r$-regular graph $H$ of order $n$, we have $\chi_{ld}(K_m[H])\leq m\cdot \chi_{ld}(H)$.
		\begin{proof}
			Let $V(K_m)=\{x_1,x_2,\dots,x_m\}$ and $V(H)=\{y_1,y_2,\dots,y_n\}$ be the vertex sets of $K_m$ and $H$ respectively. For $1\leq j \leq m$, let $y_{i}^j$ be the vertices  of $K_m[H]$ that correspond with vertices $y_i$ of $H$, where $1\leq i \leq n$. Let $f$ be a local distance antimagic labeling of $H$ that assigns $\chi_{ld}(H)$ distinct weights to vertices of $H$. Using $f$, we define a labeling $g$ for vertices of $K_m[H]$ by $$g(y_{i}^j)=f(y_i)+(j-1)n\quad \text{where $1\leq i \leq n$ and $1\leq j \leq m$}.$$ 
			For the weight of the vertices, we have,
			\begin{align*}
				w(y_{i}^j)&=\sum_{p=1, p\neq j}^m\sum_{i=1}^n f(y_i^p)+\sum_{y_k\in N(y_i)}f(y^j_k),\\   &=\frac{n(n+1)(m-1)}{2}+\bigg(\frac{(m-1)m}{2}-(j-1)\bigg)n^2+ w_f(y_i)+rn(j-1),\\
				&=\frac{n(m-1)(nm+n+1))}{2}+w_f(y_i)-(j-1)n(n-r),
			\end{align*}
			where, $w_f(y_i)$ represents the weight of the vertex $y_i$ in $H$, under labeling $f$.\\
			Observe that for fixed $j\in \{1,2,\dots,m\}$, we get $\chi_{ld}(H)$ distinct weights. Therefore we get atmost $m\cdot \chi_{ld}(H)$ distinct weights.
		\end{proof}
	\end{theorem}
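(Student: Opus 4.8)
The plan is to lift an optimal local distance antimagic labeling of $H$ to $K_m[H]$ one copy at a time, using an additive shift so that the labels of the $m$ copies occupy disjoint blocks of $\{1,\dots,mn\}$. Write the vertices of $K_m[H]$ as $y_i^j$ with $1\le i\le n$, $1\le j\le m$, where $y_1^j,\dots,y_n^j$ span the copy $H_j$ of $H$; recall that in $K_m[H]$ every vertex of $H_j$ is adjacent to every vertex of $H_p$ for $p\ne j$, while inside $H_j$ adjacency is inherited from $H$. Fix a local distance antimagic labeling $f$ of $H$ realizing $\chi_{ld}(H)$ colors and set $g(y_i^j)=f(y_i)+(j-1)n$, which is clearly a bijection onto $\{1,\dots,mn\}$.

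Next I would compute $w_g(y_i^j)$ by splitting the neighborhood of $y_i^j$ into the ``horizontal'' part $\bigcup_{p\ne j}V(H_p)$ and the ``vertical'' part $N_{H_j}(y_i^j)$. Summing arithmetic series, the horizontal part contributes $(m-1)\tfrac{n(n+1)}{2}+n^2\big(\tbinom m2-(j-1)\big)$, which depends only on $j$; using that $H$ is $r$-regular, the vertical part contributes exactly $w_f(y_i)+(j-1)rn$. Collecting terms gives
\begin{equation*}
w_g(y_i^j)=K+w_f(y_i)-(j-1)\,n(n-r)
\end{equation*}
for a constant $K$ independent of $i$ and $j$. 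In particular, for each fixed $j$ the map $y_i\mapsto w_g(y_i^j)$ equals $w_f$ plus a constant, so the copy $H_j$ contributes precisely $\chi_{ld}(H)$ distinct colors; summing over the $m$ copies yields at most $m\cdot\chi_{ld}(H)$ colors in total — provided $g$ is genuinely a local distance antimagic labeling.

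So the one step that is not mere bookkeeping is checking that adjacent vertices get distinct weights. For two adjacent vertices inside a single copy this is immediate, since their $w_g$-values differ by $w_f(y_i)-w_f(y_k)\ne 0$. For $y_i^j$ and $y_k^{j'}$ in different copies — which are always adjacent in $K_m[H]$ — a collision would force $w_f(y_k)-w_f(y_i)=(j-j')\,n(n-r)$; but each vertex weight of $H$ is a sum of $r$ distinct labels from $\{1,\dots,n\}$, so $|w_f(y_k)-w_f(y_i)|\le \big(rn-\tbinom r2\big)-\tbinom{r+1}2=r(n-r)$, whereas $|j-j'|\,n(n-r)\ge n(n-r)>r(n-r)$ because $r\le n-1<n$. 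Hence no cross-copy collision occurs, $g$ is a local distance antimagic labeling of $K_m[H]$, and $\chi_{ld}(K_m[H])\le m\cdot\chi_{ld}(H)$. The main obstacle is pinning down this spread estimate and the strict inequality $n(n-r)>r(n-r)$; everything else is routine arithmetic.
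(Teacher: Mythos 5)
Your construction and weight computation coincide with the paper's: the same shifted labeling $g(y_i^j)=f(y_i)+(j-1)n$ and the same closed form $w_g(y_i^j)=K+w_f(y_i)-(j-1)n(n-r)$, followed by counting at most $\chi_{ld}(H)$ colors per copy. The one difference is that you explicitly verify that adjacent vertices in \emph{different} copies cannot collide, via the spread estimate $|w_f(y_k)-w_f(y_i)|\le r(n-r)<n(n-r)\le |j-j'|\,n(n-r)$ (using $r\le n-1$); this check is correct and actually supplies a step the paper's proof leaves implicit, since it only counts the distinct weights without confirming the antimagic condition across copies.
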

	\begin{cor}
		For positive integers $m$ and $n>1$, $\chi_{ld}(K_m[\overline{K_{n}}])=m$.
	\end{cor}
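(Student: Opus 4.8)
The plan is to identify the graph $K_m[\overline{K_n}]$ explicitly. By the construction of the lexicographic product described above, $K_m[\overline{K_n}]$ is built from $m$ copies of $\overline{K_n}$; since every two vertices of $K_m$ are adjacent, each vertex of one copy is joined to all the vertices of every other copy, whereas within a single copy there are no edges at all. Hence $K_m[\overline{K_n}]$ is nothing but the complete multipartite graph $K_{n,n,\dots,n}$ with $m$ parts, each of cardinality $n$. Theorem~\ref{multipartite} then immediately gives $\chi_{ld}(K_m[\overline{K_n}]) = \chi_{ld}(K_{n,n,\dots,n}) = m$, which is the claim.

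I would also record the alternative route through the two bounds, since it reuses the theorem just proved. For the upper bound, apply that theorem with $H = \overline{K_n}$: this graph is $0$-regular of order $n$, and its (necessarily trivial) vertex labeling gives every vertex weight $0$, hence a single colour; the construction in the theorem then produces at most $m\cdot 1 = m$ distinct weights on $K_m[\overline{K_n}]$, so $\chi_{ld}(K_m[\overline{K_n}]) \le m$. For the matching lower bound, note that $K_m[\overline{K_n}]$ contains $K_m$ as a subgraph (take one vertex from each copy), so $\chi(K_m[\overline{K_n}]) = m$, and therefore $\chi_{ld}(K_m[\overline{K_n}]) \ge \chi(K_m[\overline{K_n}]) = m$. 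Combining the two inequalities again yields $\chi_{ld}(K_m[\overline{K_n}]) = m$.

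There is no genuine obstacle here, as both halves of the argument have already been set up in the preceding theorem and in Theorem~\ref{multipartite}. The only point to watch is that $\overline{K_n}$ is a degenerate instance — it has isolated vertices, which the standing convention excludes — so I would make the isomorphism with $K_{n,n,\dots,n}$ the primary argument (it avoids referring to $\chi_{ld}(\overline{K_n})$ altogether) and mention the bound-combining version only as a remark.
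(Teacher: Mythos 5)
Your proposal is correct. The paper states this corollary without a written proof, immediately after the theorem on $K_m[H]$ for $r$-regular $H$, so its intended derivation is essentially your second route: the upper bound $\chi_{ld}(K_m[\overline{K_{n}}])\leq m$ obtained by applying that construction with the $0$-regular graph $H=\overline{K_{n}}$ (whose trivial labeling yields a single weight class, so the formula $w(y_{i}^j)=\frac{n(m-1)(nm+n+1)}{2}+w_f(y_i)-(j-1)n(n-r)$ with $r=0$ and $w_f\equiv 0$ gives exactly one weight per copy, hence at most $m$ weights), combined with $\chi_{ld}\geq \chi = m$. Your primary argument is a genuinely different and cleaner route: identifying $K_m[\overline{K_{n}}]$ with the complete $m$-partite graph $K_{n,n,\dots,n}$ and quoting Theorem \ref{multipartite} gives the equality in one step, with no weight computation and no appeal to the lower bound separately. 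It also sidesteps the point you rightly flag, namely that $\overline{K_{n}}$ has isolated vertices, so $\chi_{ld}(\overline{K_{n}})$ is not formally defined under the paper's standing convention and the preceding theorem's conclusion must be read as ``the construction yields at most $m\cdot 1$ distinct weights'' rather than literally as $m\cdot\chi_{ld}(\overline{K_{n}})$; this is a small blemish in the paper's implicit derivation that your isomorphism argument avoids entirely. Both of your arguments are sound, and presenting the multipartite identification as primary with the bound-combining version as a remark is a reasonable, indeed preferable, arrangement.
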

	Above, we have presented several regular non-bipartite graphs $G$, where the equality $\chi_{ld}(G[\overline{K_{n}}])=\chi(G)$  holds. We now present some non-regular, non-bipartite graphs with chromatic number 3, where the equality holds.
	\begin{theorem}\label{lexijointhm}
		Let $n>1$ and $G$ be a graph of order $m$ having maximum degree $\Delta$. Let $H=G+K_1$. If 
		\begin{equation}\label{lexijoineq}
			\Delta n (mn+n)-\frac{\Delta n (\Delta n-1)}{2}< 2mn^2+\frac{(m-1)n(mn+n+1)}{2},
		\end{equation}
then $\chi_{ld}(H[\overline{K_{n}}])\leq \chi_{ld}(G[\overline{K_{n}}])+1$. 
\begin{proof}
	As lexicographic product distributes over join, we have $H[\overline{K_{n}}]=G[\overline{K_{n}}]+K_1[\overline{K_{n}}]$. Note that $G[\overline{K_{n}}]$ is a graph of order $mn$, with maximum degree $\Delta n$, while $K_1[\overline{K_{n}}]$ is a graph of order $n$ with all the vertices of degree zero. As Equation \ref{lexijoineq} holds, the Equation \ref{equation0} in Theorem \ref{jointhm1} holds and therefore we have, $\chi_{ld}(H[\overline{K_{n}}])\leq \chi_{ld}(G[\overline{K_{n}}])+1$.   
\end{proof}
	\end{theorem}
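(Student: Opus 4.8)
The plan is to obtain this from Theorem \ref{jointhm1} by exploiting the distributivity of the lexicographic product over the join. The first step is to record the identity $(A+B)[C]=A[C]+B[C]$, valid for all graphs $A,B,C$: forming $A[C]$ replaces each vertex by a copy of $C$ and turns each edge into a complete bipartite graph between the two copies, so the all-to-all edge set added between $A$ and $B$ when forming the join becomes the all-to-all edge set between the corresponding copies of $C$. Taking $A=G$, $B=K_1$, $C=\overline{K_{n}}$ gives
\[ H[\overline{K_{n}}]=G[\overline{K_{n}}]+K_1[\overline{K_{n}}]=G[\overline{K_{n}}]+\overline{K_{n}}. \]

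Next I would read off the parameters needed for Theorem \ref{jointhm1}. The graph $G[\overline{K_{n}}]$ has order $mn$, and the vertex of $G[\overline{K_{n}}]$ lying in the copy of $\overline{K_{n}}$ corresponding to $v\in V(G)$ has degree $n\deg(v)$ (there are no internal edges since $\overline{K_{n}}$ is edgeless), so its maximum degree is $n\Delta$. The graph $\overline{K_{n}}=K_1[\overline{K_{n}}]$ has order $n$ and every vertex has degree $0$; being edgeless it admits a local distance antimagic labeling vacuously, with all weights equal to $0$, so $\chi_{ld}(\overline{K_{n}})=1$. (Its isolated vertices are harmless: in the join they acquire all of $G[\overline{K_{n}}]$ as neighbours, exactly as in the proof of Theorem \ref{jointhm1}.)

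Now I would invoke Theorem \ref{jointhm1} with $\overline{K_{n}}$ in the role of the smaller-order factor (order $n$, minimum degree $\delta=0$) and $G[\overline{K_{n}}]$ in the role of the larger-order factor (order $mn$, maximum degree $n\Delta$); the order hypothesis $n\leq m$ of that theorem becomes $n\leq mn$, which holds since $m\geq 1$. Substituting $\delta_G=0$, $\Delta_H=n\Delta$ and orders $n$ and $mn$ into Equation \ref{equation0}, its left-hand side is $\Delta n(mn+n)-\frac{\Delta n(\Delta n-1)}{2}$ and its right-hand side is $2mn^{2}+\frac{(mn-n)(mn+n+1)}{2}=2mn^{2}+\frac{(m-1)n(mn+n+1)}{2}$; that is, this instance of Equation \ref{equation0} \emph{is} the hypothesis \ref{lexijoineq}. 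Hence Theorem \ref{jointhm1} applies and yields $\chi_{ld}(\overline{K_{n}}+G[\overline{K_{n}}])\leq\chi_{ld}(\overline{K_{n}})+\chi_{ld}(G[\overline{K_{n}}])=1+\chi_{ld}(G[\overline{K_{n}}])$, and since $\overline{K_{n}}+G[\overline{K_{n}}]=H[\overline{K_{n}}]$ this is exactly the claimed inequality.

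There is no genuine difficulty here; it is bookkeeping. The only points that need care are keeping straight which factor plays the role of $G$ and which plays the role of $H$ in Theorem \ref{jointhm1}, and verifying that the substituted form of Equation \ref{equation0} matches inequality \ref{lexijoineq} term by term. One could equally well cite Corollary \ref{joinwithempty2} directly, applied to the graph $G[\overline{K_{n}}]$ (order $mn$, maximum degree $n\Delta$) joined with $\overline{K_{n}}$, since $n<mn$ for $m>1$; the degenerate case $m=1$, where $H[\overline{K_{n}}]=K_2[\overline{K_{n}}]=K_{n,n}$ and $\chi_{ld}(K_{n,n})=2\leq\chi_{ld}(\overline{K_{n}})+1$, is immediate.
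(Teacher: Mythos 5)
Your proposal is correct and follows essentially the same route as the paper: decompose $H[\overline{K_{n}}]=G[\overline{K_{n}}]+K_1[\overline{K_{n}}]$ via distributivity of the lexicographic product over the join, read off the order $mn$ and maximum degree $\Delta n$ of $G[\overline{K_{n}}]$ and the edgeless factor of order $n$, and check that hypothesis \eqref{lexijoineq} is exactly the instance of Equation \eqref{equation0} needed to apply Theorem \ref{jointhm1}. Your extra bookkeeping (role assignment, term-by-term matching, and the $m=1$ remark) only makes explicit what the paper's proof leaves implicit.
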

		\begin{cor}
		Let $n>1$ and $G$ be a $r$-regular bipartite graph of order $m$. Let $H=G+K_1$. If 
		\begin{equation}\label{lexijoinK1}
			 rn (mn+n)-\frac{ rn (r n-1)}{2}< 2mn^2+\frac{(m-1)n(mn+n+1)}{2},
		\end{equation}
		then $\chi_{ld}(H[\overline{K_{n}}])=3$. 
		\begin{proof}
		The result follows from Theorem \ref{lexijointhm}, Theorem \ref{bipartitelexi1}. 
		\end{proof}
	\end{cor}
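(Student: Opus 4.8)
The plan is to obtain the upper bound by specializing Theorem~\ref{lexijointhm} and then evaluating the resulting constant via Theorem~\ref{bipartitelexi1}, and to obtain the matching lower bound from the standing inequality $\chi_{ld}\ge\chi$.

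For the upper bound, first observe that since $G$ is $r$-regular its maximum degree is $\Delta=r$, so the hypothesis \eqref{lexijoinK1} of the corollary is exactly the hypothesis \eqref{lexijoineq} of Theorem~\ref{lexijointhm} applied to this $G$. Hence Theorem~\ref{lexijointhm} yields $\chi_{ld}(H[\overline{K_{n}}])\le\chi_{ld}(G[\overline{K_{n}}])+1$. Next, because $G$ is an $r$-regular bipartite graph of order $m$ and $n>1$, Theorem~\ref{bipartitelexi1} gives $\chi_{ld}(G[\overline{K_{n}}])=2$. Combining the two inequalities gives $\chi_{ld}(H[\overline{K_{n}}])\le 3$.

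For the lower bound, I would use that the lexicographic product distributes over the join, so $H[\overline{K_{n}}]=(G+K_1)[\overline{K_{n}}]=G[\overline{K_{n}}]+\overline{K_{n}}$. Since a proper coloring of $G$ lifts to a proper coloring of $G[\overline{K_{n}}]$ and $G$ embeds in $G[\overline{K_{n}}]$ as a subgraph, we have $\chi(G[\overline{K_{n}}])=\chi(G)=2$ (here $G$ has an edge as $r\ge 1$). Therefore $\chi(H[\overline{K_{n}}])=\chi(G[\overline{K_{n}}])+\chi(\overline{K_{n}})=2+1=3$, and since $\chi_{ld}(H[\overline{K_{n}}])\ge\chi(H[\overline{K_{n}}])=3$, equality with the upper bound follows.

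I do not expect a genuine obstacle here: the argument is a direct assembly of the two cited results. The only point deserving a word of care is verifying that \eqref{lexijoinK1} is literally the instance $\Delta=r$ of \eqref{lexijoineq}, which is immediate from regularity, and noting that $n>1$ and $r\ge 1$ are the hypotheses needed to invoke Theorem~\ref{bipartitelexi1} and to conclude $\chi(G)=2$ respectively.
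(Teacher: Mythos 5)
Your proposal is correct and matches the paper's (very terse) argument: the upper bound comes from specializing Theorem~\ref{lexijointhm} with $\Delta=r$ and evaluating $\chi_{ld}(G[\overline{K_{n}}])=2$ via Theorem~\ref{bipartitelexi1}, while the lower bound $\chi_{ld}(H[\overline{K_{n}}])\geq\chi(H[\overline{K_{n}}])=3$ is exactly the step the paper leaves implicit. Your explicit verification of that chromatic-number computation (and the remark that $r\geq 1$ is needed) is a faithful filling-in of the same route, not a different one.
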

	
	The wheel graph $W_m$ is a graph of order $m+1$, formed by joining all the vertices of cycle $C_m$ to a new vertex called the central vertex, i.e. $W_m=C_m+K_1$.
	\begin{cor}
		For positive integers $n>1$ and $m>3$ even, $\chi_{ld}(W_m[\overline{K_{n}}])=3$.
	\end{cor}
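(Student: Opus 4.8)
The plan is to derive this as an immediate application of the preceding corollary (the one attached to inequality~\eqref{lexijoinK1}), taking the base graph to be the cycle $C_m$. First I would record the structural fact $W_m = C_m + K_1$ used in the paper's definition of the wheel, and observe that when $m$ is even the cycle $C_m$ is a $2$-regular bipartite graph of order $m$. Thus the hypotheses of that corollary are in place with $r = 2$, provided the numerical condition~\eqref{lexijoinK1} is verified; once it is, the corollary delivers $\chi_{ld}(W_m[\overline{K_n}]) = 3$ with no further work.

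The only computation to carry out is the substitution $r = 2$ into~\eqref{lexijoinK1}. The left-hand side becomes
\[
2n(mn+n) - \frac{2n(2n-1)}{2} = 2n^{2}(m+1) - n(2n-1) = 2mn^{2} + n ,
\]
so the required inequality reduces to
\[
2mn^{2} + n < 2mn^{2} + \frac{(m-1)n(mn+n+1)}{2},
\]
that is, to $2 < (m-1)(mn+n+1)$. Since $m \ge 4$ and $n \ge 2$, we have $m - 1 \ge 3$ and $mn + n + 1 = (m+1)n + 1 \ge 11$, so the right-hand side is at least $33$; the inequality holds with ample room to spare.

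Hence the cited corollary applies and yields $\chi_{ld}(W_m[\overline{K_n}]) \le 3$. For the lower bound one notes that $W_m$ contains a triangle (the hub together with any two consecutive rim vertices), so $W_m[\overline{K_n}]$ is non-bipartite and $\chi_{ld}(W_m[\overline{K_n}]) \ge \chi(W_m[\overline{K_n}]) = \chi(W_m) = 3$ for even $m$ — but this is already built into the conclusion of the corollary, so it need not be reproved. There is no genuine obstacle in this statement: all the substance lives in Theorem~\ref{lexijointhm} (which reduces the join to Theorem~\ref{jointhm1}) and Theorem~\ref{bipartitelexi1} (which supplies $\chi_{ld}(C_m[\overline{K_n}]) = 2$ for even $m$), and the present argument is just the routine check that the degree/order bound~\eqref{lexijoinK1} is satisfied in this concrete instance.
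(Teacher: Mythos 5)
Your proposal is correct and follows exactly the route the paper intends: write $W_m = C_m + K_1$, note that for even $m$ the cycle $C_m$ is a $2$-regular bipartite graph, check inequality~\eqref{lexijoinK1} with $r=2$ (which, as you compute, reduces to the trivially true $2 < (m-1)(mn+n+1)$), and invoke the preceding corollary together with Theorem~\ref{bipartitelexi1}. The arithmetic and the remark that the lower bound $\chi(W_m[\overline{K_n}])=3$ is already subsumed in that corollary are both accurate, so nothing is missing.
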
 
	The friendship graph $F_m$ is a graph of order $2m+1$, formed by joining all the vertices of  $m$ copies of $K_2$ to a new vertex called the central vertex, i.e., $F_m=mK_2+K_1$.
	\begin{cor}
		For positive integers $n>1$ and $m>1$, $\chi_{ld}(F_m[\overline{K_{n}}])=3$.
	\end{cor}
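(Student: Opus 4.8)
The plan is to realise $F_m$ as a join and invoke Theorem \ref{lexijointhm}. Recall that $F_m = mK_2 + K_1$; here $mK_2$ is a $1$-regular bipartite graph of order $2m$. So I would take $G = mK_2$ in Theorem \ref{lexijointhm}, which forces $H = G + K_1 = F_m$, with the ``order'' parameter equal to $2m$ and maximum degree $\Delta = 1$. The only hypothesis to verify is inequality (\ref{lexijoineq}). Substituting $\Delta = 1$ and replacing the order by $2m$ there, the left-hand side becomes $n(2mn+n) - \tfrac{n(n-1)}{2} = 2mn^2 + \tfrac{n^2+n}{2}$, while the right-hand side is $4mn^2 + \tfrac{(2m-1)n(2mn+n+1)}{2}$; since $4mn^2 > 2mn^2 + \tfrac{n^2+n}{2}$ already holds for all $m,n\geq 1$ and the remaining summand on the right is non-negative, the inequality is satisfied for every $n>1$ and $m>1$. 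Hence Theorem \ref{lexijointhm} yields $\chi_{ld}(F_m[\overline{K_{n}}]) \leq \chi_{ld}\big((mK_2)[\overline{K_{n}}]\big) + 1$.

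Next I would evaluate $\chi_{ld}\big((mK_2)[\overline{K_{n}}]\big)$. Since $mK_2$ is an $r$-regular bipartite graph (with $r=1$) of order $2m$ --- the definition preceding Theorem \ref{bipartitelexi1} does not demand connectedness, so $mK_2$, whose two partite sets are obtained by choosing one endpoint of each edge, qualifies --- Theorem \ref{bipartitelexi1} gives $\chi_{ld}\big((mK_2)[\overline{K_{n}}]\big) = 2$. Combining with the previous paragraph, $\chi_{ld}(F_m[\overline{K_{n}}]) \leq 3$.

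For the matching lower bound I would use $\chi_{ld}(\Gamma) \geq \chi(\Gamma)$ for every graph $\Gamma$, together with the standard fact that $\chi(G[\overline{K_{n}}]) = \chi(G)$: a proper colouring of $G$ lifts to $G[\overline{K_{n}}]$ by giving all $n$ vertices of a copy the colour of the corresponding vertex of $G$ (legal because $\overline{K_{n}}$ has no edges), and conversely $G$ embeds in $G[\overline{K_{n}}]$. Since $F_m$ contains a triangle and is properly $3$-colourable, $\chi(F_m)=3$, whence $\chi_{ld}(F_m[\overline{K_{n}}]) \geq \chi(F_m[\overline{K_{n}}]) = \chi(F_m) = 3$. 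Together with the upper bound this gives $\chi_{ld}(F_m[\overline{K_{n}}]) = 3$.

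The argument is essentially bookkeeping; the only points requiring care are confirming that the disconnected graph $mK_2$ genuinely fits the paper's notion of an $r$-regular bipartite graph so that Theorem \ref{bipartitelexi1} applies, and substituting the correct order $2m$ (rather than $m$) into inequality (\ref{lexijoineq}). I do not expect any genuine obstacle.
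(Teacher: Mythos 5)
Your proposal is correct and follows essentially the same route as the paper: the paper obtains this corollary from the preceding corollary (Theorem \ref{lexijointhm} combined with Theorem \ref{bipartitelexi1}) applied to $F_m = mK_2 + K_1$ with $mK_2$ viewed as a $1$-regular bipartite graph of order $2m$, exactly as you do. Your explicit verification of inequality (\ref{lexijoineq}) and the lower bound via $\chi(F_m[\overline{K_{n}}]) = \chi(F_m) = 3$ just spell out what the paper leaves implicit.
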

	We have explored several regular graphs $G$ with chromatic number 3, for which $\chi_{ld}(G[\overline{K_{n}}])=3$. We have also explored some non-regular graphs $G$ with chromatic number 3, for which  $\chi_{ld}(G[\overline{K_{n}}])=3$. We now study some non-regular graphs with chromatic number 3, for which $\chi_{ld}(G[\overline{K_{n}}])\not=3$. 
	\begin{theorem}
		Let $n>1$ and $G$ be a bipartite graph having two vertices $x$ and $y$ belonging to the same partite set such that $N(x)\subset N(y)$. If $H=G+K_1$, $\chi_{ld}(H[\overline{K_{n}}])\geq 4$. 
		\begin{proof}
			Since the chromatic number of $H$ is $3$, we have $\chi_{ld}(H[\overline{K_{n}}])\geq 3$. Let $v$ be the lone vertex that is joined to all other vertices of $G$ in $H$. For $1\leq i \leq n$, let $x_i$, $y_i$ and $v_i$ be the vertices of $H[\overline{K_{n}}]$, that correspond with vertices $x$, $y$ and $v$ respectively of $H$.  As $N(x)\subset N(y)$,  by the construction of $G[\overline{K_{n}}]$ we have, $N(x_i)\subset N(y_i)$ for all $1\leq i \leq n$. Consider the vertex $v_1$ of $H[\overline{K_{n}}]$. It is adjacent to vertices $x_1$ and $y_1$.  So, the weight of $v_1$ is distinct from the weights of $x_1$ and $y_1$. If $\chi_{ld}(H[\overline{K_{n}}])=3$, then since $x$ and $y$ belong to the same partite set, $w(x_{1})=w(y_{1})$. But as $N(x_1)\subset N(y_1)$, this is not possible. Therefore $\chi_{ld}(H[\overline{K_{n}}])\geq 4$.
		\end{proof}
	\end{theorem}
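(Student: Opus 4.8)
The plan is to mimic the argument behind Theorem~\ref{nonregbipartite}, pushing the lower bound up by one to account for the apex vertex of $H=G+K_1$. First I would record the cheap lower bound: since $G$ is bipartite and has an edge (the vertex $y$ has a neighbour), $\chi(H)=\chi(G)+\chi(K_1)=3$; choosing the first vertex of each copy of $\overline{K_n}$ exhibits $H$ as an induced subgraph of $H[\overline{K_n}]$, so $\chi_{ld}(H[\overline{K_n}])\ge \chi(H[\overline{K_n}])\ge \chi(H)=3$. It therefore suffices to rule out equality, so I would assume for contradiction that some local distance antimagic labeling $g$ of $H[\overline{K_n}]$ induces exactly three colours.

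Next I would exploit the join structure. Writing $H[\overline{K_n}]=G[\overline{K_n}]+\overline{K_n}$ (the lexicographic product distributes over the join, as already used in Theorem~\ref{lexijointhm}), let $v_1,\dots,v_n$ be the vertices of $H[\overline{K_n}]$ coming from the $K_1$ factor; each $v_i$ is adjacent to every vertex of $G[\overline{K_n}]$. Since $G[\overline{K_n}]$ has edges, the colouring induced by $g$ uses at least two colours on $G[\overline{K_n}]$, and each $v_i$ must avoid all of them; with only three colours in play this forces $G[\overline{K_n}]$ to be properly $2$-coloured by $g$ while all the $v_i$ share the remaining colour. In particular, the two colour classes restricted to $G[\overline{K_n}]$ form a bipartition of $G[\overline{K_n}]$.

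Now I would bring in $x$ and $y$. Since $G$ has no isolated vertices, $N_G(x)\ne\emptyset$; fix $z\in N_G(x)\subseteq N_G(y)$ and a copy $z_1$ of $z$ in $G[\overline{K_n}]$. Then $z_1$ is adjacent to both $x_1$ and $y_1$ in $G[\overline{K_n}]$, and since only two colours occur there, $x_1$ and $y_1$ must receive the same colour, hence $w_g(x_1)=w_g(y_1)$. On the other hand $N_H(x)=N_G(x)\cup\{v\}\subsetneq N_G(y)\cup\{v\}=N_H(y)$, so by the construction of the lexicographic product $N(x_1)\subsetneq N(y_1)$ in $H[\overline{K_n}]$; as $g$ is a bijection onto positive integers, $w_g(y_1)=\sum_{t\in N(y_1)}g(t)>\sum_{t\in N(x_1)}g(t)=w_g(x_1)$, a contradiction. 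Therefore $\chi_{ld}(H[\overline{K_n}])\ge 4$.

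The step that needs the most care is the middle one: arguing that any three-colour local distance antimagic labeling must pile all the apex copies $v_i$ onto one colour and properly $2$-colour $G[\overline{K_n}]$, and then that $x_1$ and $y_1$ are thereby forced to share a colour. This is precisely where the standing no-isolated-vertices hypothesis is used, since it guarantees $x$ and $y$ have a common neighbour $z$ so that $x_1$ and $z_1$, and $y_1$ and $z_1$, are adjacent pairs in the $2$-coloured part. The remaining work is routine bookkeeping with neighbourhoods in the lexicographic product.
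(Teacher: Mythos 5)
Your proof is correct and follows essentially the same route as the paper: assume only three colours, force $w(x_1)=w(y_1)$, and contradict this via the strict containment $N(x_1)\subsetneq N(y_1)$ with positive labels. In fact you make explicit two steps the paper only asserts -- that the apex copies $v_i$ must occupy the third colour so $G[\overline{K_n}]$ is properly $2$-coloured, and that a common neighbour $z_1$ (available since $G$ has no isolated vertices) forces $x_1$ and $y_1$ to share a colour -- so your write-up is, if anything, more complete than the paper's.
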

		The fan graph $T_m$ is a graph of order $m+1$, formed by joining all the vertices of path $P_m$ to a new vertex called the central vertex, i.e. $T_m=P_m+K_1$.
		\begin{cor}
			For integers $n>1$, $m>3$, $\chi_{ld}(T_m[\overline{K_{n}}])\geq 4$.
		\end{cor}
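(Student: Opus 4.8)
The plan is to obtain this as an immediate application of the theorem stated just above, with $G = P_m$ and $H = T_m = P_m + K_1$. So the whole task reduces to verifying that the path $P_m$ satisfies the two hypotheses of that theorem: that it is bipartite, and that it contains two vertices in a common partite set one of whose neighbourhoods is strictly contained in the other's.

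First I would recall that $P_m$, being a tree, is bipartite. Writing $V(P_m) = \{v_1, v_2, \dots, v_m\}$ with $v_i v_{i+1} \in E(P_m)$ for $1 \le i \le m-1$, the standard bipartition places the odd-indexed vertices in one part and the even-indexed ones in the other; in particular $v_1$ and $v_3$ lie in the same partite set. Next I would compare their neighbourhoods inside $P_m$: we have $N(v_1) = \{v_2\}$, and since $m > 3$ the vertex $v_4$ exists, so $N(v_3) = \{v_2, v_4\}$. Hence $N(v_1) = \{v_2\} \subset \{v_2, v_4\} = N(v_3)$, a strict containment between two vertices of the same part. (This is precisely where $m > 3$ is needed; when $m = 3$ one has $N(v_3) = \{v_2\} = N(v_1)$ and there is no proper containment.)

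Taking $x = v_1$ and $y = v_3$, the graph $T_m = P_m + K_1$ is exactly of the form $H = G + K_1$ demanded by the preceding theorem, which therefore yields $\chi_{ld}(T_m[\overline{K_{n}}]) \ge 4$ at once. There is no genuine obstacle beyond the bookkeeping of checking these hypotheses: the real content — that when $N(x_1) \subsetneq N(y_1)$ in $H[\overline{K_{n}}]$ the weights $w(x_1)$ and $w(y_1)$ cannot coincide, so a third colour on the bipartite part $P_m[\overline{K_{n}}]$ is impossible and, together with the vertex class coming from $K_1$, a fourth colour is forced — has already been established inside the cited theorem, so nothing further needs to be reproved here.
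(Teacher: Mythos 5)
Your proposal is correct and matches the paper's intended argument: the corollary is exactly an application of the preceding theorem with $G=P_m$, $H=T_m=P_m+K_1$, using $x=v_1$, $y=v_3$ (same partite set, $N(v_1)=\{v_2\}\subset\{v_2,v_4\}=N(v_3)$, which requires $m>3$). Nothing further is needed.
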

		\begin{cor}
			For integers $n>1$, $m>3$, $\chi_{ld}(T_m[\overline{K_{n}}])\leq 5$.
		\end{cor}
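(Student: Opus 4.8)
The plan is to exploit the fact that the lexicographic product distributes over the join, exactly as in the proof of Theorem \ref{lexijointhm}. Since $T_m=P_m+K_1$, we have $T_m[\overline{K_{n}}]=P_m[\overline{K_{n}}]+K_1[\overline{K_{n}}]=P_m[\overline{K_{n}}]+\overline{K_{n}}$, because $K_1[\overline{K_{n}}]$ is just a single copy of $\overline{K_{n}}$. Thus the problem reduces to bounding $\chi_{ld}$ of the join of a graph we already understand with an empty graph, which is precisely the setting of Corollary \ref{joinwithempty2}.

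First I would record the relevant parameters. The graph $G:=P_m[\overline{K_{n}}]$ has order $mn$, and since every internal vertex of $P_m$ has degree $2$ while the two end vertices have degree $1$, the maximum degree of $G$ is $\Delta=2n$. The empty graph $\overline{K_{n}}$ has order $n$, and since $m>3$ we have $n<mn$, so Corollary \ref{joinwithempty2} applies with that corollary's ``$m$'' taken to be $n$, its ``$n$'' taken to be $mn$, and its ``$\Delta$'' taken to be $2n$.

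Next I would verify the hypothesis inequality of Corollary \ref{joinwithempty2}. Substituting these values, its left-hand side becomes $2n(n+mn)-\tfrac{2n(2n-1)}{2}=2mn^2+n$, while its right-hand side becomes $2mn^2+\tfrac{(mn-n)(mn+n+1)}{2}$. Hence the required inequality is equivalent to $2<(m-1)(mn+n+1)$, which holds trivially for all $m>3$ and $n>1$. Therefore Corollary \ref{joinwithempty2} gives $\chi_{ld}(T_m[\overline{K_{n}}])=\chi_{ld}(P_m[\overline{K_{n}}]+\overline{K_{n}})\leq\chi_{ld}(P_m[\overline{K_{n}}])+1$.

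Finally I would invoke Theorems \ref{pathcomposition1} and \ref{pathcomposition2}: for every $n>1$, whether even or odd, and every $m\geq 3$, these results give $\chi_{ld}(P_m[\overline{K_{n}}])\leq 4$. Combining with the previous step yields $\chi_{ld}(T_m[\overline{K_{n}}])\leq 4+1=5$, which is the claim. There is essentially no obstacle beyond the bookkeeping of parameters; the only point needing a moment's care is confirming that the upper bound furnished by Theorems \ref{pathcomposition1} and \ref{pathcomposition2} is uniformly $4$ across all of the parity and congruence subcases, so that the ``$+1$'' contributed by the join argument produces exactly $5$.
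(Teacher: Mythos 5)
Your proof is correct and follows essentially the same route as the paper: the paper's proof simply cites Theorems \ref{pathcomposition1}, \ref{pathcomposition2} and \ref{lexijointhm}, and your argument amounts to unwrapping Theorem \ref{lexijointhm} — the same decomposition $T_m[\overline{K_{n}}]=P_m[\overline{K_{n}}]+K_1[\overline{K_{n}}]$ followed by the join bound (your Corollary \ref{joinwithempty2} is the special case of Theorem \ref{jointhm1} that Theorem \ref{lexijointhm} relies on), with the hypothesis inequality verified explicitly for $\Delta=2$, which the paper leaves implicit.
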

		\begin{proof}
			The result follows from Theorem \ref{pathcomposition1}, Theorem \ref{pathcomposition2}, and Theorem \ref{lexijointhm}.
		\end{proof}
  \section{Conclusion}
 In this paper, we provided two examples of classes of graphs $G$ of order $n$ having $\chi_{ld}(G)=p,$ where $2\leq p \leq n.$
Further in Section 3, we studied the local distance antimagic chromatic number for the join of graphs and presented examples of classes of graphs $G$ and $H$ for which $\chi_{ld}(G+H)=\chi_{ld}(G)+\chi_{ld}(H).$\\
 Section 4 studied the lexicographic product of bipartite graphs with the complement of the complete graph. We showed that the local distance antimagic chromatic number of the lexicographic product of regular bipartite graphs with the complement of the complete graph is 2. Many classes of non-regular bipartite graphs were presented for which $\chi_{ld}(G[\overline{K_{n}}])=2$. Later in the section, we presented examples of classes of graphs with chromatic number 3 for which $\chi_{ld}(G[\overline{K_{n}}])=3$.\\\\
 The following problems naturally arise:
 \begin{problem}
     Characterise graphs $G$ and $H$ for which $\chi_{ld}(G+H)=\chi_{ld}(G)+\chi_{ld}(H).$
 \end{problem}
 \begin{problem}
     Characterise non-regular bipartite graphs $G$ for which $\chi_{ld}(G[\overline{K_{n}}])=2$.
 \end{problem}
 \begin{problem}
     Characterise graphs $G$ with chromatic number 3 for which $\chi_{ld}(G[\overline{K_{n}}])=3$.
 \end{problem}

\end{document}